\documentclass[british, a4paper,12pt]{article}
\usepackage[font={small,sf}, labelfont={sf,bf}, width=\textwidth]{caption}

\usepackage[utf8]{inputenc}
\usepackage[british]{babel}

\usepackage{amsthm,amssymb,bbm,stmaryrd}

\usepackage{libertinus}
\usepackage[T1]{fontenc}
\usepackage{libertinust1math}
\usepackage[cal=euler, scr=rsfso]{mathalpha}
\useosf

\usepackage{microtype}

\usepackage[a4paper,vmargin={2cm,2cm},hmargin={2.25cm,2.25cm}]{geometry}

\usepackage{hyperref}
\hypersetup{
	colorlinks=false,
	breaklinks=true,
	pdfpagemode=UseNone,
	bookmarksopen=false,
}
\usepackage{cleveref}

\usepackage{enumitem}

\usepackage{xcolor}
\usepackage{todonotes}

\numberwithin{equation}{section}

\newtheorem{thm}{\bfseries \upshape Theorem}[section]
\newtheorem{lem}[thm]{Lemma}
\newtheorem{prop}[thm]{Proposition}
\newtheorem{cor}[thm]{Corollary}

\theoremstyle{definition}

\newtheorem{defi}[thm]{Definition}

\renewcommand{\P}{\mathop{}\!\mathbb{P}}
\newcommand{\E}{\mathop{}\!\mathbb{E}}
\newcommand{\R}{\mathbb{R}}
\newcommand{\Z}{\mathbb{Z}}

\newcommand{\D}{\mathbb{D}}
\newcommand{\resamp}{\xi}
\newcommand{\Resamp}{\Xi}

\newcommand{\e}{\operatorname{e}} 
\renewcommand{\d}{\mathop{}\!\mathrm{d}}

\newcommand{\cv}[1][n]{\enskip\mathop{\longrightarrow}^{}_{#1 \to \infty}\enskip}
\newcommand{\cvloi}[1][n]{\enskip\mathop{\longrightarrow}^{(d)}_{#1 \to \infty}\enskip}
\newcommand{\cvps}[1][n]{\enskip\mathop{\longrightarrow}^{a.s.}_{#1 \to \infty}\enskip}
\newcommand{\cvproba}[1][n]{\enskip\mathop{\longrightarrow}^{\P}_{#1 \to \infty}\enskip}
\newcommand{\eqloi}[1][n]{\enskip\mathop{=}^{(d)}_{}\enskip}

\usepackage{mathtools}

\DeclareSymbolFont{extraup}{U}{zavm}{m}{n}
\DeclareMathSymbol{\vardspade}{\mathalpha}{extraup}{81}
\DeclareMathSymbol{\varheart}{\mathalpha}{extraup}{86}
\DeclareMathSymbol{\vardiamond}{\mathalpha}{extraup}{87}
\DeclareMathSymbol{\varclub}{\mathalpha}{extraup}{84}

\makeatletter
\renewcommand*{\@fnsymbol}[1]{\ensuremath{\ifcase#1\or \vardspade \or \varheart \or \vardiamond\or \varclub \or
  \mathsection\or \mathparagraph\or \|\or **\or \dagger\dagger
  \or \ddagger\ddagger \else\@ctrerr\fi}}
\makeatother

\author{
Nicolas \textsc{Curien}\thanks{Institut de Math\'ematique d'Orsay,
Universit\'e Paris-Saclay, 91400 Orsay, France.\protect\\
\href{mailto:nicolas.curien@gmail.com}
{\texttt{nicolas.curien@universite-paris-saclay.fr}}}
\qquad\&\qquad
Cyril \textsc{Marzouk}\thanks{CMAP, CNRS, \'Ecole polytechnique,
Institut Polytechnique de Paris, Palaiseau, France.\protect\\
\href{mailto:cyril.marzouk@polytechnique.edu}
{\texttt{cyril.marzouk@polytechnique.edu}}}
}

\date{}

\title{R\'emy's diffusion on Brownian trees}

\begin{document}

\maketitle

\begin{abstract}
R\'emy's algorithm is a famous recursive construction of uniform random binary trees of growing size by a local grafting operation.
In this work we construct a continuous version, a new local diffusion on the space of real trees of growing Brownian Continuum Random Trees (CRT's).
It appears as the scaling limit of a variant of R\'emy's algorithm due to Bacher, Bodini, and Jacquot.
Once the trees are rescaled to have constant mass, this diffusion uncovers an ergodic dynamics on trees with the Brownian CRT as unique invariant law.
\end{abstract}

\begin{figure}[h!]
    \centering
    \vspace{-2cm}
    \includegraphics[width=7cm, trim=0cm 0cm 0cm 8cm,clip]{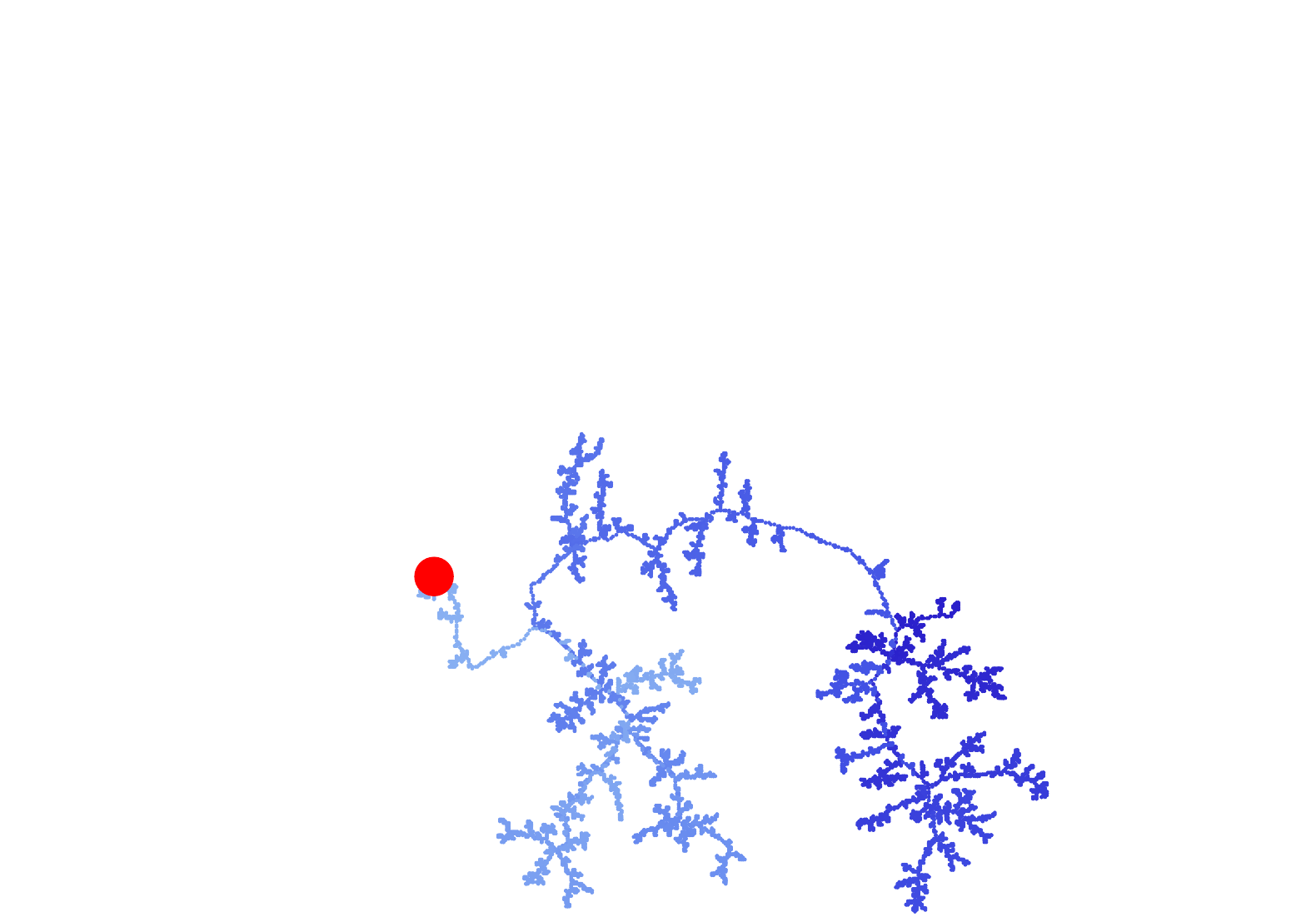}        \includegraphics[width=7cm, trim=0cm 0cm 0cm 8cm,clip]{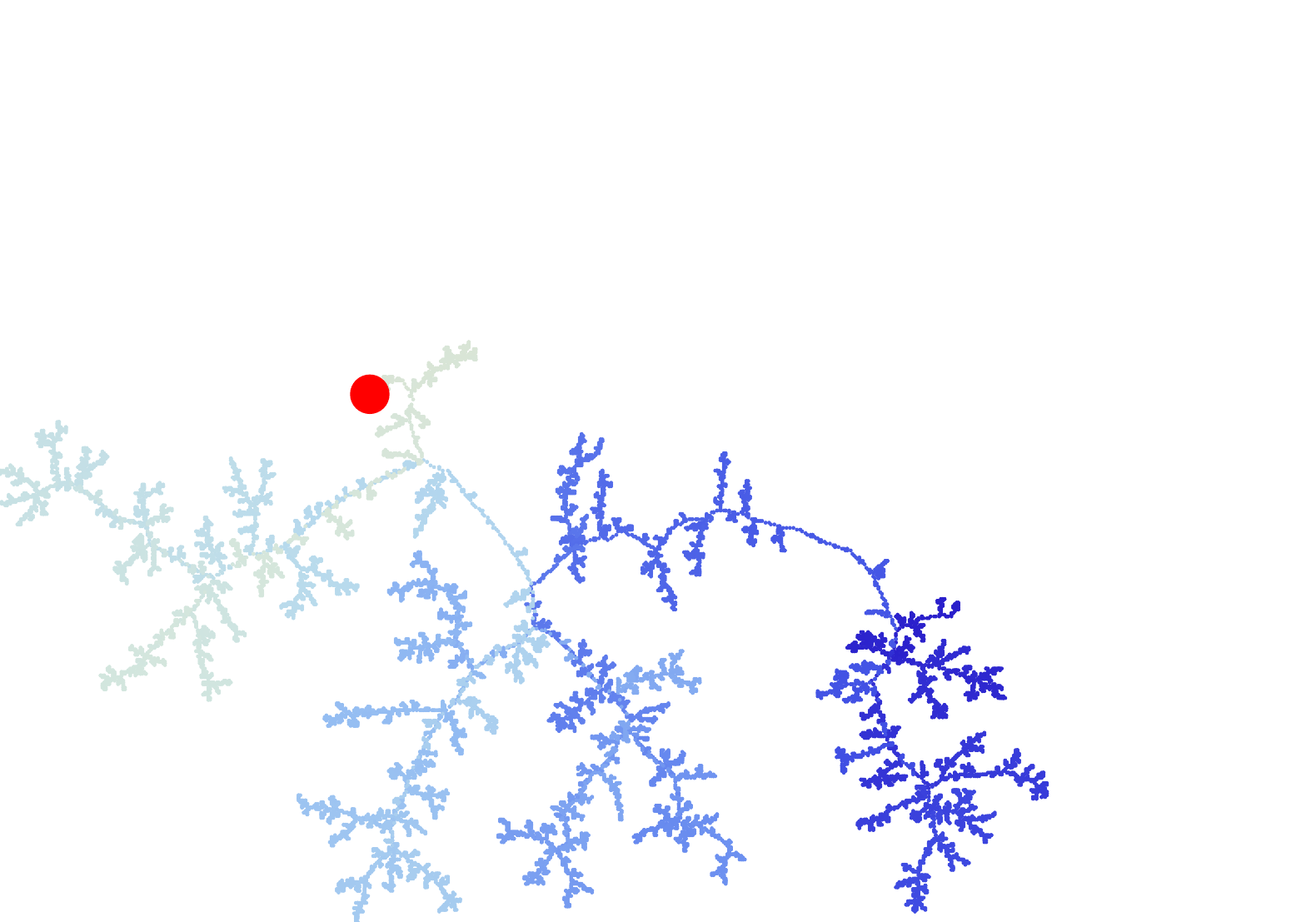}  
    
    \bigskip 
    
           \includegraphics[width=7cm, trim=0cm 0cm 0cm 8cm,clip]{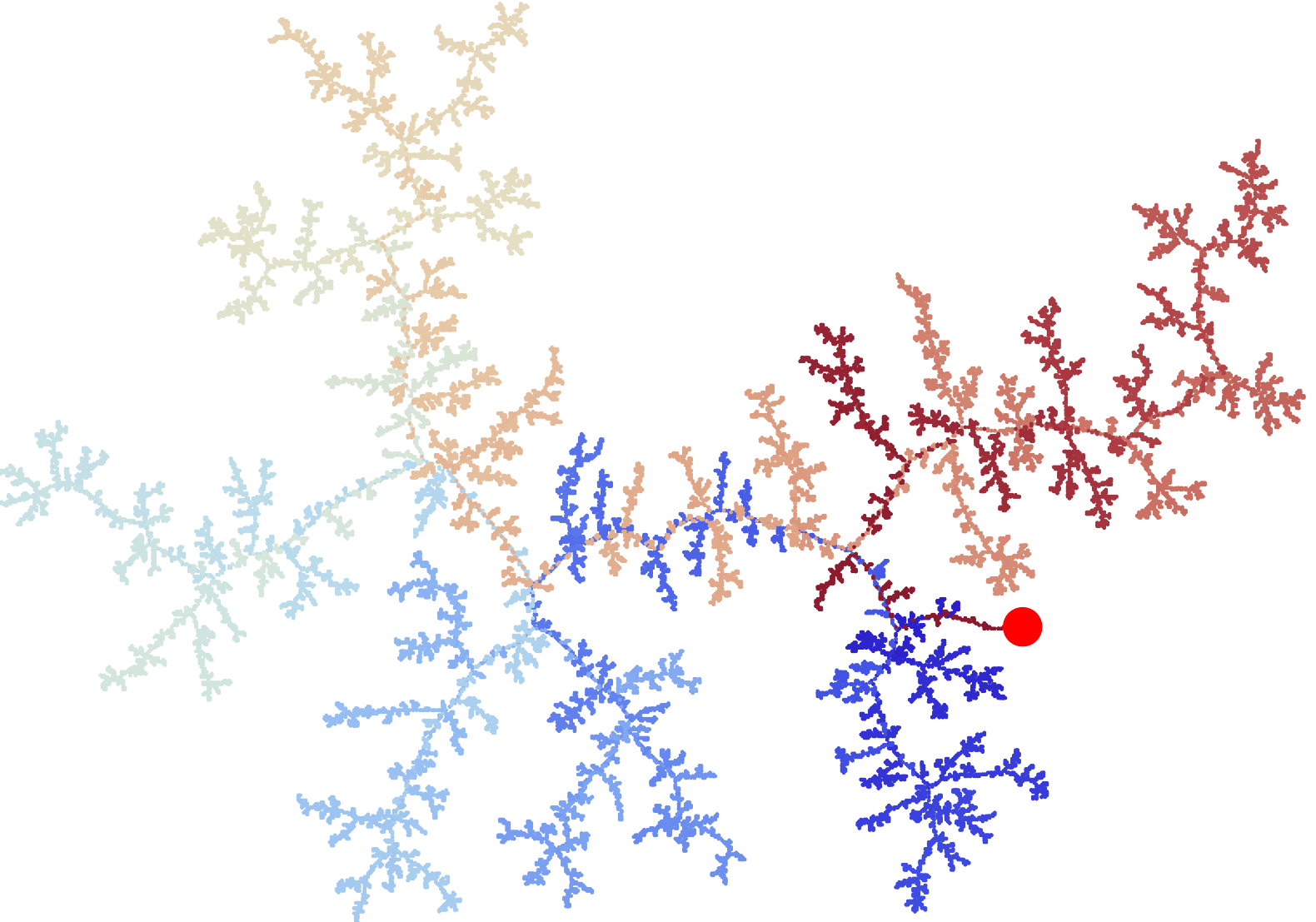}

    \caption{
    Snapshot at times 10k, 20k, and 40k of the R\'emy diffusion. Colours are ranging from dark blue to red as new vertices are created. The red point is the current marked point where the growth continues. \href{https://www.imo.universite-paris-saclay.fr/~nicolas.curien/videos/remy.mp4}{\emph{Click here to see a video.}}
    }
    \label{fig:simu}
\end{figure}

\clearpage
\tableofcontents

\section{Introduction}

This work studies a variant of the celebrated R\'emy algorithm to construct a Markov chain of marked binary trees with growing size. Its scaling limit gives rise to a novel diffusion on marked continuum trees, which we call \textsc{R\'emy's growing diffusion}, and whose unique stationary distribution is the Brownian CRT of Aldous.
In this introduction, we first describe precisely the discrete dynamics, then, more informally, the continuum one, before stating our main results.

\subsection{One-step R\'emy algorithm \& Bacher--Bodini--Jacquot projection}

Throughout this work, all discrete trees that we consider will be rooted plane binary trees (each vertex has $0$ or $2$ children) and we shall simply call them \textbf{binary trees}. The \textbf{size} $n$ of the tree will be its number of internal vertices (it then has $n+1$ leaves). We shall often distinguish a vertex, in which case the tree, and this vertex, are said to be \textbf{marked}.
R\'emy's algorithm~\cite{Rem85} is a recursive construction of random binary trees based on the following one-step growth:
Given a binary plane tree $T_n$ with size $n$ and a marked vertex $X_n$ that can be either an internal vertex or a leaf, one constructs a random tree $T_{n+1}$ by replacing in $T_n$ the vertex $X_n$ by a new internal vertex with two offspring: one is $X_n$ and the other one is a new leaf, which is placed either to the left or to the right of $X_n$ with probability $1/2$ each, see Figure~\ref{fig:remy} left.

It is straightforward to check that if $T_n$ is uniformly distributed on binary trees with size $n$ and if, conditionally on $T_n$, the vertex $X_n$ is uniformly distributed over the $2n+1$ possible vertices, then $T_{n+1}$ has the uniform distribution on binary trees with size $n+1$.
A famous theorem originally due to Aldous~\cite{Ald93} (see also~\cite{LG05} for this version) states that such a random binary tree $T_n$ with size $n$ satisfies the scaling limit:
\begin{equation}\label{eq:aldous}
\frac{1}{\sqrt{2n}} \cdot T_n \cvloi \boldsymbol{\mathcal{T}}_{2\mathbb e}, \end{equation}
in the Gromov--Hausdorff--Prokhorov sense where the limit is Aldous' original Brownian CRT, coded by \textbf{twice} the standard Brownian excursion $\mathbb e$; this excursion is informally Brownian motion on $[0,1]$ conditioned to stay nonnegative and end at $0$. The goal of this work is to define the analogue of the discrete R\'emy algorithm on Brownian trees obtained heuristically by ``grafting and stretching'' locally in $\mathcal{T}_{2 \mathbb{e}}$ as in the discrete Figure~\ref{fig:remy} left.
We shall do so by taking appropriate scaling limit of the discrete R\'emy algorithm.

\begin{figure}[h!]\centering
\includegraphics[page=1, height=6\baselineskip]{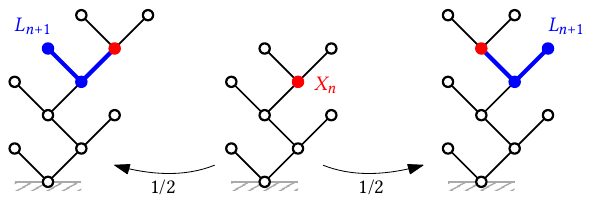}
\qquad
{\vrule width 1pt height 6\baselineskip}
\qquad
\includegraphics[page=2, height=6\baselineskip]{les_dessins}
\caption{The two steps of the BBJ dynamics.
Left: we replace the marked red vertex $X_n$ in $T_n$ by a blue cherry and choose the side of the new leaf $L_{n+1}$ with probability $1/2$.
Right: the bijective projection of a leaf on the internal vertices, except the rightmost leaf whose projection ``exits the tree'', which is used to determine the next marked vertex.
}
\label{fig:remy}
\end{figure}

\subsubsection*{The Bacher--Bodini--Jacquot coupled version} 

In the original \textbf{R\'emy algorithm}, the one-step growth as in Figure~\ref{fig:remy} left is iterated at each step $n$ by choosing independently of the rest the marked vertex $X_n$ uniformly at random in $T_n$ and performing the growth procedure at this vertex to get $T_{n+1}$. 
Unfortunately, the Markov chain $(T_n)_{n \ge 0}$ that is produces converges \emph{almost surely} after suitable normalisation towards a Brownian CRT, see~\cite[Exercises~7.4.10 and~7.4.11]{Pit06} or~\cite[Theorem~5]{CH13}, so this chain \emph{cannot} be used to define a continuous version. 

However, one can also couple the growing points $X_n$ and $X_{n+1}$ while preserving the crucial property that $X_n$ is a uniform random vertex of $T_n$ at each time $n$.
In~\cite{BBJ17}, \textbf{Bacher, Bodini, and Jacquot} introduced such a coupling
in such a way that in some sense $X_{n+1}$ is as close to $X_n$ as possible.
Their work was motivated by the fact that this coupling is nearly optimal in terms of randomness, for it requires $\sim 2n$ bits to generate a random tree of size $n$, which best possible up to lower order corrections, see~\cite{BBJ17}. 
Our motivation is that this coupling  makes the growth steps accumulate locally and this will provide a nontrivial scaling limit.
Let us now describe this coupling, which is represented in Figure~\ref{fig:remy} right.

Since a binary tree of size $n$ has $n+1$ leaves,  it is possible to associate in a bijective way every leaf but one with an internal vertex:
Every vertex of the tree is either a right vertex or a left vertex, the root being a right vertex by convention.
Then define the \textbf{projection} of a leaf on the internal vertices as its last ancestor that lies on the opposite side.
This indeed defines a bijection between the leaves, except the rightmost one, and the internal vertices, that one can easily invert: send each left internal vertex to the rightmost leaf among its descendants and vice versa.
From the convention that the root is a right vertex, it is the projection of the leftmost leaf; however the rightmost leaf that has no projection: we say that the projection ``exits the tree''.

We will study in this work the following two-step algorithm, which will be henceforth referred to as the \textbf{BBJ dynamics}.
First initialise the process with $T_0^{\, \bullet} \coloneqq (T_0, X_0)$ being a single marked vertex (a leaf) and then for any $n \ge 0$, build $T_{n+1}^{\, \bullet} \coloneqq (T_{n+1}, X_{n+1})$ from $T_n^{\, \bullet} \coloneqq (T_n, X_n)$, as follows:
\begin{itemize}
\item First build $T_{n+1}$ by applying the one-step R\'emy growth as in Figure~\ref{fig:remy} left: replace the marked vertex $X_n$ in $T_n^{\, \bullet}$ by a new branchpoint with two offspring: $X_n$ with its entire progeny and a new leaf $L_{n+1}$, placed either on the left or on the right independently with probability $1/2$;

\item Then mark this tree at $X_{n+1}$ chosen as follows: again independently, with probability $1/2$ each, either set $X_{n+1} = L_{n+1}$, or let $X_{n+1}$ be the projection of $L_{n+1}$ on the internal vertices of $T_{n+1}$ as in Figure~\ref{fig:remy} right;
since the rightmost leaf of $T_{n+1}$ has no projection, then in this case we sample $X_{n+1}$ uniformly at random and independently among all vertices of $T_{n+1}$.
\end{itemize}
The last case where the new leaf is the rightmost one and we need to sample the marked vertex uniformly at random will be henceforth called \textbf{resampling steps}. The chain $(T_n,X_n)_{n \ge 0}$ produced this way has the property that at any given time $n\geq 0$ the  marked tree $T_n^{\, \bullet}=(T_n, X_n)$ has the uniform distribution on its size (see Lemma~\ref{lem:indep_tree_failure} below). Figure~\ref{fig:ex_dyn} displays a possible example of the first steps of such a sequence of random marked trees.

\begin{figure}[h!]
\centering
\includegraphics[page=4, width=.95\linewidth]{les_dessins}
\caption{A few steps of the BBJ dynamics. 
The current marked vertex $X_n$ is shown in red and the newly added leaf $L_n$ in black, unless it coincides with $X_n$. The fifth step is a resampling step.
}
\label{fig:ex_dyn}
\end{figure}

An important tool in our study will be the process $(S_n)_{n \ge 0}$ recording the height (i.e.~the generation) of the marked vertex in the successive trees. Its scaling limit will enable us to build the continuum analogue of the discrete tree dynamics that we shall now describe.

\subsection{Continuum tree growth}

The \textbf{R\'emy continuous diffusion} will be obtained by taking the scaling limit of the discrete algorithm above. Let us describe informally this continuum random dynamics $(\boldsymbol{\mathcal{T}}^{\, \bullet}_t)_{t \ge 0}$ and refer to Section~\ref{sec:contiuousdynamics} for details.
This dynamics takes value in the space $\mathbb{T}^{\, \bullet}$ of compact real trees\footnote{To help the reader we shall always use bold fonts to denote measured rooted trees, and use the symbol ${}^{\, \bullet}$ when they have an additional marked point.} $(\mathcal{T}, d)$ equipped with a finite measure $\mu$, a root $\rho$, and an extra marked point $x$. This space is endowed with the marked Gromov--Hausdorff--Prokhorov topology, see Section~\ref{sec:preliminaries}.

We introduce a gluing--stretching procedure $\Psi$ that takes as an input such a marked real tree $\boldsymbol{\mathcal{T}}^{\, \bullet}=(\mathcal{T},d,\mu,\rho,x) \in \mathbb{T}^{\, \bullet}$ and a continuous function $g \colon [0, \infty) \to \R$ such that $g(0)=0$ and returns a \emph{random} c\`adl\`ag function $\Psi(\boldsymbol{\mathcal{T}}^{\, \bullet}, g) = (\boldsymbol{\mathcal{T}}^g_t)_{t \ge 0}$ with values in $\mathbb{T}^{\, \bullet}$, with $\boldsymbol{\mathcal{T}}^g_0 = \boldsymbol{\mathcal{T}}^{\, \bullet}$.  
It can be heuristically described as follows, see also Figure~\ref{fig:phig} for an illustration.
We start from the point $x$ in $\mathcal{T}$ and follow the dynamics of a point whose height (distance to the root) variation is given by $g$:
\begin{itemize}
    \item When $g$ decreases, we go down the tree towards the root and, doing so, we \emph{stretch} the distances along the ancestral line by a factor of $2$;
    \item When $g$ increases, we \emph{glue} new pieces of the tree given by the contour of $g$.
\end{itemize}
In particular, when $g$ completes an excursion above its running infimum, this corresponds to the creation of a subtree coded by \textbf{twice} that excursion in the usual sense, which is glued on the tree $\boldsymbol{\mathcal{T}}^{\, \bullet}$ on the ancestral line of its marked point $x$, at a distance from $x$ given by the negative of the minimum of $g$ at this time.
Notice that locally the effect is similar to the one-step R\'emy algorithm of Figure~\ref{fig:remy} left: we stretch the ancestral line and graft new branches.

\begin{figure}[h!]
    \centering
    \includegraphics[page=5, width=.95\linewidth]{les_dessins}
    \caption{    Illustration of the application $\Psi(\boldsymbol{\mathcal{T}}^{\, \bullet},g)$ that produces the tree on the right at time $t$.
    All the sub-excursions of $g$ above its running minimum create new subtrees (here in different colours) coded by twice the corresponding excursion, and the instants of a new minimum double the distances along the ancestral line of the point $x$ (the yellow parts).
    The new tree $\Psi(\boldsymbol{\mathcal{T}}^{\, \bullet},g)_t$ is marked at $x_t$, corresponding to the tip of $g$ at time $t$, while the root $\rho$ is left unchanged.}
    \label{fig:phig}
\end{figure}

Let $\mathrm{Ht}(x) = d(\rho, x)$ denote the height of $x$ in the initial tree $(\mathcal{T}, d)$.
We shall prove in Section~\ref{sec:contiuousdynamics} that, up to time \[\resamp = \inf\{t > 0 \colon g(t) < -\mathrm{Ht}(x)\},\]
this construction is well-defined and provides a function $(\Psi(\boldsymbol{\mathcal{T}}^{\, \bullet}, g)_t)_{t \in [0, \resamp)}$ which is \emph{deterministic and continuous} in $t$ and which depends in a Lipschitz-continuous way in both the initial tree $\boldsymbol{\mathcal{T}}^{\, \bullet}$ and the function $g$.
In addition, it enjoys the semi-group property: for every $s, t \ge 0$ such that $t+s < \resamp$, letting $g|_{[u,u+v]} \colon w \in [0,v] \mapsto g(u+w)-g(u)$, we have:
\[\Psi(\boldsymbol{\mathcal{T}}^{\, \bullet}, g)_{t+s}
= \Psi(\Psi(\boldsymbol{\mathcal{T}}^{\, \bullet}, g)_{s}, g|_{[s,s+t]})_{t}
= \Psi(\Psi(\boldsymbol{\mathcal{T}}^{\, \bullet}, g)_{t}, g|_{[t,s+t]})_{s}
.\]

This time $\resamp$ is the analogue of the resampling events in the discrete dynamics, when we exit below the root; accordingly, we must now resample the marked point randomly in the current tree. This is where both discontinuity and randomness enter the definition of $\Psi$. To do this, recall that the initial tree $\boldsymbol{\mathcal{T}}^{\, \bullet}$ is endowed with a finite measure $\mu$; the latter evolves then with $g$ to provide a finite measure on $\boldsymbol{\mathcal{T}}^g_{\resamp-}$ that we use to resample the marked point at time $\resamp$.
After resampling at this time $\resamp=\resamp_1$, we then continue the construction as above with the remaining part of the function $g$ up to the next time $\resamp_2$ this situation occurs again, etc.
We shall see that these resampling events do not accumulate (Lemma~\ref{lem:pas_d_accumulation}), so the process is well-defined for every time $t>0$.

Consider as initial tree a single point $\mathbf{o}$ and as driving function $g$ a standard Brownian motion $B = (B_t)_{t \geq 0}$. Then $\resamp=0$ and the previous construction $\Psi(\mathbf{o},B)$ is ill-defined near $t=0$, but the existence of such a process  will be proved in the following theorem by approximations with the BBJ dynamics. For every $n \geq 0$ we let $\boldsymbol{T}_n^{\, \bullet}$ denote the measured rooted and marked tree obtained by equipping the $2n+1$ vertices of the discrete marked binary tree $T_n^{\, \bullet}$ with the graph distance and the counting measure. 
In the rest of the paper, if $\boldsymbol{\mathcal{T}}^{\, \bullet}=(\mathcal{T},d,\mu,\rho,x)$ is any measured rooted marked tree and $\alpha>0$ any positive number, we denote by $\alpha \cdot \boldsymbol{\mathcal{T}}^{\, \bullet}$ the tree
\[\alpha \cdot \boldsymbol{\mathcal{T}}^{\, \bullet} = (\mathcal{T}, \alpha d, \alpha^2 \mu, \rho, x).\]

\begin{thm}[R\'emy's diffusion as the scaling limit of the Bacher--Bodini--Jacquot chain]\label{thm:cv_BBJ_resampling}
The convergence in distribution
\[\Bigl(\frac{1}{\sqrt{2n}} \cdot \boldsymbol{T}_{\lfloor nt \rfloor}^{\, \bullet}\Bigr)_{t \ge 0} \cvloi (\boldsymbol{\mathcal{T}}^{\, \bullet}_t)_{t \ge 0},\]
holds for the Skorokhod topology on $\D( \R_{+}, \mathbb{T}^{\, \bullet})$. The limit process is such that for every $t>s>0$, the tree
$\boldsymbol{\mathcal{T}}_t^{\, \bullet}$ has the same law as 
$\Psi(\boldsymbol{\mathcal{T}}_s^{\, \bullet}, B|_{[s, t]})$ where $B$ is an independent standard Brownian motion. 
\end{thm}
The process $( \boldsymbol{ \mathcal{T}}_{t})_{t \geq 0}$ obtained from $( \boldsymbol{ \mathcal{T}}_{t}^{\, \bullet})_{t \geq 0}$ by forgetting the marked point is continuous: the only source of discontinuities are the discrete resampling instants of the marked point when it collides with the root.
Since $T_n$ is uniformly distributed for every $n$, then Aldous' invariance principle~\eqref{eq:aldous} shows that for each $t>0$, the tree $\boldsymbol{\mathcal{T}}_t$ has the law of the Brownian CRT with mass $t$.

A consequence of Theorem~\ref{thm:cv_BBJ_resampling} and the convergence of trees~\eqref{eq:aldous} is that if $ \boldsymbol{\mathcal{T}}_{2\mathbb{e}}^{\, \bullet} = (\mathcal{T}_{2 \mathbb{e}}, X)$ is a Brownian CRT of mass $1$ with an independent random marked point $X \in \mathcal{T}_{2 \mathbb{e}}$ sampled from its mass measure, and if $B$ is an independent Brownian motion then for any  $\varepsilon >0$,
\begin{equation}\label{eq:infinitesimal}
\Psi(\boldsymbol{\mathcal{T}}_{2\mathbb{e}}^{\, \bullet}, B)_{\varepsilon} 
\enskip\text{has the law of a marked Brownian CRT of mass } 1+ \varepsilon
.\end{equation}
We consider this fact as the infinitesimal version of the continuous R\'emy diffusion on Brownian CRT's. Although it may have a direct proof in the setting of continuum trees (but we do not know how to do it), we deduce it here from the discrete setting by passing to the limit.

In particular, by Brownian scaling, the rescaled dynamics $ \boldsymbol{\mathcal{T}}^{\, \bullet} \mapsto \frac{1}{ \sqrt{1+t}} \cdot \Psi( \boldsymbol{\mathcal{T}}^{\, \bullet}, B)_t$ leaves invariant the law of the marked Brownian CRT $ \boldsymbol{\mathcal{T}}_{2\mathbb{e}}^{\, \bullet} = (\mathcal{T}_{2 \mathbb{e}}, X)$. Even though the law of processes started from two distinct initial conditions may remain mutually singular at every finite time, the rescaled dynamics exhibits an \textbf{asymptotic coupling} in the sense of~\cite{HMS11}: the two processes can be coupled so that their 
distance converges to zero.
This yields in particular the following unique ergodicity property.

\begin{thm}[Unique ergodicity]\label{thm:ergo}
Let $\boldsymbol{\mathcal{T}}^{\, \bullet}$ be any rooted real tree equipped with a mass measure with total mass $1$ and a marked point different from the root. Let $B$ be an independent Brownian motion. Then $\frac{1}{ \sqrt{1+t}} \cdot \Psi(\boldsymbol{\mathcal{T}}^{\, \bullet}, B)_t$ converges in distribution towards $ \boldsymbol{\mathcal{T}}_{2\mathbb{e}}^{\, \bullet} = (\mathcal{T}_{2 \mathbb{e}}, X)$ as $t\to\infty$ in the marked Gromov--Hausdorff--Prokhorov topology.
\end{thm}

\subsubsection*{Diffusions on Brownian CRT's}

Several Markovian dynamics on real trees with the Brownian CRT as invariant measure can be found in the literature, which, interestingly, all arise as scaling limits of discrete tree-valued Markov chains, but are constructed and analysed through different lenses.

The \textbf{prune-and-regraft} dynamics was introduced by Evans,  Pitman, \&~Winter~\cite{EPW06} as a scaling limit of the Aldous--Broder algorithm, see also~\cite{EW06}. 
The so-called \textbf{Aldous' diffusion} has been constructed rigorously by Forman, Pal, Rizzolo, \&~Winkel in the culmination of a series of work~\cite{FPRW23}; see also the work of Lohr, Mytnik, \&~Winter~\cite{LMW20} in a weaker topology. This diffusion is the scaling of the discrete chains on cladograms which consists heuristically to perform R\'emy's one-step move back and forth at i.i.d.~locations in the tree. 
Another diffusion can be obtained by projecting Zambotti's dynamics~\cite{Zam01} on paths through the Duquesne--Le~Gall coding of trees. This dynamics is the scaling of a \textbf{corner flip dynamics} as proved in~\cite{AL15}.

Arguably, our new dynamic is simpler to construct and has the advantage of being completely local. A companion paper~\cite{CCFT26} introduces yet another such local dynamics, with the strong property of being monotone, but whose growth has the disadvantage of evolving in a c\`adl\`ag fashion (on trees, even with no marked point). Our work gives a great latitude to define a growth procedure on continuum trees and we plan to study in the future a continuous version of the present one.

More broadly, understanding Markovian dynamics whose invariant distribution is a canonical probability measure $\mathscr{L}$ is of fundamental importance, as such dynamics can provide valuable information about $\mathscr{L}$ itself, including monotonicity properties, fixed-point characterisations, and tools for proving convergence to equilibrium. In the Gaussian setting, classical examples include Langevin dynamics and the Markovian interpretation of Stein's method. In the context of Yang--Mills theory, the stochastic quantisation program initiated by Parisi \& Wu seeks to construct and study a difficult probability measure $\mathscr{L}$ through a comparatively tractable Markovian dynamics having $\mathscr{L}$ as its invariant measure; see~\cite{CCHS22} for a recent breakthrough in this direction.

In the more modest setting of real trees, one of our aims is to \emph{develop an infinitesimal, or `generator-like', approach to Markovian dynamics on the space of real trees}. More precisely, we seek a way to characterise nice Markov processes through their infinitesimal action in the vein of~\eqref{eq:infinitesimal}. Such an approach might provide a common framework for comparing the various dynamics described above.

\paragraph{Acknowledgments:}  We thank  Alessandra Caraceni, Serte Donderwinkel, William Fleurat, Robin Stephenson, and Adrianus Twigt for stimulating discussions. The first author is supported by ``SuperGrandMa'', the  ERC Consolidator Grant No 101087572. 

\section{Preliminaries on the discrete dynamics}

In this section we study the discrete dynamics $(T_n, X_n)_{n\ge 0}$ defined in the Introduction using the Bacher--Bodini--Jacquot coupling of R\'emy's algorithm. Recall in particular the definition of resampling events when exiting the tree from the rightmost leaf.

First, the fact that for each $n \geq 0$ the random marked tree $T_n^{\, \bullet}= (T_n,X_n)$ is uniformly distributed 
is a direct consequence of the following two lemmas whose proofs are clear from Figure~\ref{fig:remy}.

\begin{lem} \label{lem:remy1}
The one-step R\'emy growth operation of Figure~\ref{fig:remy} left is a bijection between:
\begin{itemize}
    \item binary trees with size $n$ marked at a vertex together with a coin flip, i.e.~triples $(t_n,x_n, \epsilon_n)$ where $x_n$ is any vertex of $t_n$ and $\epsilon_n \in \{\mathrm{left}, \mathrm{right}\}$,
    
    \item binary trees with size $n+1$ marked at a leaf, i.e.~pairs $(t_{n+1}, \ell_{n+1})$ where $\ell_{n+1}$ is a leaf of $t_{n+1}$.
\end{itemize}

\end{lem}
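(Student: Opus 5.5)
The plan is to write down the map explicitly, then its inverse, and check that the two compositions are identities. The forward map $\Phi$ sends $(t_n,x_n,\epsilon_n)$ to $(t_{n+1},\ell_{n+1})$. Here $t_{n+1}$ is obtained from $t_n$ by cutting the edge from $x_n$ to its parent (or, if $x_n$ is the root, by taking a new root). In that place we insert a new internal vertex $v$. Its two children are $x_n$, together with its whole progeny, and a new leaf $\ell_{n+1}$, which is placed on side $\epsilon_n$. The new tree is a plane binary tree: $v$ has exactly two children, every other vertex keeps its number of children, and the planar order elsewhere is unchanged. It has $n+1$ internal vertices, so $\Phi$ is well-defined, and $\ell_{n+1}$ is indeed a leaf of it.

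The inverse $\Phi^{-1}$ takes $(t_{n+1},\ell_{n+1})$ with $\ell_{n+1}$ a leaf. Since $n+1\ge 1$, the tree is not reduced to a single vertex, so $\ell_{n+1}$ is not the root and has a parent $v$, which is internal. We set $\epsilon_n$ to be the side of $\ell_{n+1}$ among the two children of $v$. We let $x_n$ be the sibling of $\ell_{n+1}$. We let $t_n$ be the tree obtained by deleting $\ell_{n+1}$ and contracting $v$, that is, attaching the subtree rooted at $x_n$ in the position previously occupied by $v$ (as the root if $v$ was the root). This gives a plane binary tree with $n$ internal vertices, and $x_n$ may be any vertex of it, internal or leaf.

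Next we check that $\Phi^{-1}\circ\Phi$ and $\Phi\circ\Phi^{-1}$ are the identity. In $\Phi(t_n,x_n,\epsilon_n)$, the parent of $\ell_{n+1}$ is $v$, its sibling is $x_n$, and its side is $\epsilon_n$, and contracting $v$ recovers $t_n$. Conversely, regrafting a cherry at the sibling, on side $\epsilon_n$, restores $t_{n+1}$ together with its marked leaf.

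There is no genuine obstacle. The only care needed is in the edge cases: when $x_n$ is the root of $t_n$, and correspondingly when $v$ is the root of $t_{n+1}$. One also has to keep track of the planar order: the side $\epsilon_n$ is exactly the information that the inverse map reads off from the position of $\ell_{n+1}$. As a sanity check, we can verify the cardinalities: $2(2n+1)C_n=(n+2)C_{n+1}$, where $C_n$ is the Catalan number, which indeed holds.
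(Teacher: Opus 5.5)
Your proposal is correct and is the argument the paper has in mind: the paper only says the bijection is clear from Figure~\ref{fig:remy} (left), and you make it explicit by recovering $(t_n,x_n,\epsilon_n)$ from the parent, the sibling and the side of the marked leaf, then contracting the parent. The cardinality identity $2(2n+1)C_n=(n+2)C_{n+1}$ is a valid sanity check, but it is not needed once you have shown that both compositions are the identity.
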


\begin{lem} \label{lem:proj}
Let $t_n$ be a binary tree with size $n$ and $L_n$ a uniform random leaf of $t_n$. Independently and with probability $1/2$ each, let either $X_n = L_n$ or let $X_n$ be the projection of $L_n$ on $t_n$ as in Figure~\ref{fig:remy} right; if $L_n$ is the rightmost leaf, then let $X_n$ be an independent uniform random vertex of $t_n$ instead in the second case.
Then $X_n$ is uniformly distributed among the $2n+1$ vertices of $t_n$.
\end{lem}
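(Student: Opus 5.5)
The plan is to compute $\P(X_n = v)$ directly for each vertex $v$ of $t_n$, separating internal vertices from leaves. The tool is the bijection described in the Introduction: the projection sends the $n+1$ leaves other than the rightmost one bijectively onto the $n$ internal vertices of $t_n$. Its inverse sends a left internal vertex to the rightmost leaf among its descendants, and symmetrically for right ones. Throughout, $L_n$ is uniform over the $n+1$ leaves and the coin is fair and independent.

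Let $\ell^\ast$ be the rightmost leaf, and let $\varepsilon$ be the coin deciding between ``$X_n = L_n$'' and ``project''. Set $U$ to be the independent uniform vertex used when we project from $\ell^\ast$. The case decomposition is:
\[
X_n = \begin{cases} L_n & \text{if } \varepsilon = \text{keep},\\ \pi(L_n) & \text{if } \varepsilon = \text{project and } L_n \neq \ell^\ast,\\ U & \text{if } \varepsilon = \text{project and } L_n = \ell^\ast,\end{cases}
\]
where $\pi$ denotes the projection.

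For an internal vertex $v$, there is exactly one leaf $\ell \ne \ell^\ast$ with $\pi(\ell) = v$. Therefore
\[
\P(X_n = v) = \tfrac12\cdot\tfrac{1}{n+1} + \tfrac12\cdot\tfrac{1}{n+1}\cdot\tfrac{1}{2n+1}.
\]
The first term comes from projecting $\ell$; the second comes from projecting $\ell^\ast$ and then $U = v$. For a leaf $v$, only the keep branch and the resampling branch contribute, so
\[
\P(X_n = v) = \tfrac12\cdot\tfrac1{n+1} + \tfrac12\cdot\tfrac{1}{n+1}\cdot\tfrac1{2n+1}.
\]
This is the same value. Simplifying, $\tfrac{1}{2(n+1)}\bigl(1+\tfrac{1}{2n+1}\bigr) = \tfrac{1}{2(n+1)}\cdot\tfrac{2n+2}{2n+1} = \tfrac1{2n+1}$, as claimed.

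The only nontrivial input is that the projection really is a bijection from the non-rightmost leaves onto the internal vertices. I would verify this via the stated inverse. Take a leaf $\ell \neq \ell^\ast$ and let $u$ be its last ancestor on the opposite side; say $\ell$ is a right vertex and $u$ is a left vertex. Then every vertex on the path strictly below $u$ down to $\ell$ is a right vertex, so $\ell$ is the rightmost descendant leaf of $u$. Conversely, the rightmost descendant leaf of a left internal vertex $u$ projects back to $u$, and the symmetric statement holds for right vertices. The edge cases need a check: the root is a right vertex, so it is the projection of the leftmost leaf, and $\ell^\ast$ has no opposite-side ancestor. I expect this bookkeeping to be the only step needing care; the counting above is then immediate.
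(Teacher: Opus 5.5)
Your proof is correct and takes the same approach the paper intends; the paper only says the lemma is clear from the figure, i.e.\ the projection is a bijection from the $n$ non-rightmost leaves onto the $n$ internal vertices, so every vertex receives mass $\frac{1}{2(n+1)}\bigl(1+\frac{1}{2n+1}\bigr)=\frac{1}{2n+1}$. One detail is left implicit in your bijection check: every leaf other than the rightmost one has an opposite-side ancestor (a left leaf has the root, and a right leaf with no left ancestor lies on the rightmost branch), and this is easily added.
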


In the BBJ chain $(T_n^{\, \bullet})_{n\geq 0}$, the set of vertices of $T_{n+1}$ is naturally identified to those of $T_n$ together with an additional internal vertex and the leaf $L_{n+1}$, hence if $x \in T_n$, we will abuse notation and still write $x \in T_{n+1}$ for the corresponding vertex in $T_{n+1}$. Beware, if $x,y \in T_n$, their distance in $T_{n+1}$ might differ from that  in $T_n$ (it may only increase, though).

\subsection{On the tree and resampling steps}

The combination of the two previous lemmas shows that the marked tree produced by the algorithm at time $n$ has the uniform distribution. Quite surprisingly, this marked tree is independent of the instants of the resampling steps that have occurred so far, which are also independent of each other.
As a consequence, not only is $(T_n, X_n)$ uniformly distributed for any fixed $n$, but this is also the case at the resampling instants.

For every $k \ge 1$, let $I_k = 1$ if the $k$'th step is a resampling step and $I_k=0$ otherwise.

\begin{lem}[Independence of the final tree and resampling steps]\label{lem:indep_tree_failure}
For every $n \ge 1$, the variables $(T_n^{\, \bullet}, I_1, \dots, I_n)$ are independent, 
each $I_k$ has the Bernoulli distribution with parameter $1/(2k+2)$ respectively,
and $T_n^{\, \bullet}=(T_n, X_n)$ has the uniform distribution on marked binary trees with size $n$.
\end{lem}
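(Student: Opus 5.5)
We argue by induction on $n$, proving the slightly stronger statement that for every $n\ge 1$, every marked tree $(t,x)$ of size $n$ and every $(i_1,\dots,i_n)\in\{0,1\}^n$,
\[
\P\bigl(T_n^{\, \bullet}=(t,x),\, I_1=i_1,\dots,I_n=i_n\bigr)=\frac{1}{C_n(2n+1)}\prod_{k=1}^n p_k(i_k),
\]
where $C_n$ denotes the number of binary trees of size $n$ (so $C_n(2n+1)$ is the number of marked trees of size $n$), $p_k(1)=1/(2k+2)$ and $p_k(0)=1-1/(2k+2)$. This product form is exactly the joint independence, the uniformity of $T_n^{\, \bullet}$, and the announced Bernoulli laws.

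Since $T_0^{\, \bullet}$ is deterministic and uniform (there is a single marked tree of size $0$), the case $n=0$ (with no indicators) holds trivially. Assume the formula at rank $n$, and condition on the event $\{I_1=i_1,\dots,I_n=i_n\}$. Under this conditioning, $T_n^{\, \bullet}$ is uniform, and the coin $\epsilon_n$ giving the side of the new leaf is independent of it and fair. By Lemma~\ref{lem:remy1}, the pair $(T_{n+1},L_{n+1})$ is then uniform over trees of size $n+1$ marked at a leaf. Equivalently, $T_{n+1}$ is uniform, and conditionally on $T_{n+1}$, $L_{n+1}$ is a uniform leaf among its $n+2$ leaves.

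The step $n+1$ is a resampling step exactly when the second coin chooses the projection and $L_{n+1}$ is the rightmost leaf. This gives $I_{n+1}=1$ with conditional probability $\frac12\cdot\frac1{n+2}=\frac1{2(n+1)+2}$, whatever $T_{n+1}$ is. It remains to check that, conditionally on $T_{n+1}$ and on each value of $I_{n+1}$, the vertex $X_{n+1}$ is uniform. I expect this to be the only delicate point: Lemma~\ref{lem:proj} gives uniformity only after averaging over the two values of $I_{n+1}$, so the two cases have to be checked separately.

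\begin{itemize}
\item On $\{I_{n+1}=1\}$, the vertex $X_{n+1}$ is sampled independently and uniformly by definition.
\item On $\{I_{n+1}=0\}$, we compute directly. Each leaf $\ell$ is chosen as $X_{n+1}$ with probability $\frac12\cdot\frac1{n+2}$, through the case $X_{n+1}=L_{n+1}$. Each internal vertex $v$ is chosen with probability $\frac12\cdot\frac1{n+2}$ as well, since it is the projection of exactly one non-rightmost leaf by the bijection described in the introduction. Hence every one of the $2n+3$ vertices has the same probability $\frac1{2(n+2)}$; summing gives total mass $\frac{2n+3}{2n+4}=\P(I_{n+1}=0)$, and so, conditionally on $I_{n+1}=0$, the vertex $X_{n+1}$ is uniform.
\end{itemize}

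In both cases $T_{n+1}$ remains uniform and independent of $I_{n+1}$, because the conditional law of $I_{n+1}$ given $T_{n+1}$ does not depend on $T_{n+1}$. Moreover all these conditional laws are the same whatever $(i_1,\dots,i_n)$ was. Multiplying out gives the product formula at rank $n+1$, which completes the induction.
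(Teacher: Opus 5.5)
Your proof is correct, but it takes a different route from the paper: you argue forward in time, the paper argues backward.

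\textbf{The paper's argument.} The paper observes that the final marked tree $(t_n,x_n)$, together with the set $J$ of resampling indices, determines the whole trajectory. Each step is undone by recovering the marked leaf, which is $x_k$ itself if it is a leaf, the preimage of $x_k$ under the projection if it is internal, or the rightmost leaf if $k\in J$, and then inverting the R\'emy growth. Every pair $((t_n,x_n),J)$ therefore has probability $4^{-n}\prod_{j\in J}(2j+1)^{-1}$, which does not depend on $(t_n,x_n)$. The product form then comes from the Catalan identity
\[4^{-n}=\frac{1}{(2n+1)\mathrm{Cat}(n)}\prod_{j=1}^n\frac{2j+1}{2j+2}.\]

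\textbf{Your argument.} Your induction uses only Lemma~\ref{lem:remy1} together with a refinement of Lemma~\ref{lem:proj}: uniformity of $X_{n+1}$ given $T_{n+1}$ holds separately on $\{I_{n+1}=0\}$ and on $\{I_{n+1}=1\}$, not just after averaging. You correctly identify this refinement as the crux and verify it using the bijection between non-rightmost leaves and internal vertices.

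\textbf{What each route buys.} Yours avoids inverting the dynamics and the Catalan computation, and it makes the parameter $\frac12\cdot\frac1{k+1}$ transparent as ``project'' times ``rightmost leaf''. The paper's counting argument gives the law of the entire trajectory in one stroke. It also explains the ``surprising'' independence directly: all histories with the same resampling set are equally likely.
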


\begin{proof}
The key observation is that the final marked tree $(t_n, x_n)$ and the indices of resampling steps determine uniquely the entire sequence $(t_1, x_1), \dots, (t_n, x_n)$. 
Indeed, let us consider the dynamics run backward. For any $k=n, \dots, 1$, if the $k$'th step is not a resampling step, then given $(t_k, x_k)$ one can recover the intermediate marked leaf of $t_k$: it is the right-most leaf among the descendants of $x_k$ if the latter is a left child and vice versa. If the $k$'th step is a resampling step, then this means that the marked leaf was the rightmost leaf of $t_k$. In each case one thus recovers the marked leaf and we can then invert the growth procedure to recover $(t_{k-1}, x_{k-1})$.

At each step, one makes a uniform random choice among $4$ possibilities and independently of the rest, and in case of resampling at step $j$, one extra choice of a vertex with $2j+1$ total possibilities is needed.
Then the probability to produce $(t_n, x_n)$ and to resample exactly at times in a given subset $J \subset \{1, \dots, n\}$, or equivalently to produce the unique corresponding sequence $(t_1, x_1), \dots, (t_n, x_n)$ with resampling precisely at the steps with index in $J$,
equals
\[4^{-n} \prod_{j \in J} \frac{1}{2j+1}.\]
Let $\mathrm{Cat}(n) = \frac{1}{n+1} \binom{2n}{n}$ denote the $n$'th Catalan numbers, which counts the number of rooted binary plane trees with size $n$.
After some simple manipulations, we obtain:
\[4^{-n} \prod_{j \in J} \frac{1}{2j+1} = \frac{1}{(2n+1) \mathrm{Cat}(n)} \cdot \prod_{j \in J} \frac{1}{2j+2} \cdot \prod_{j \in J^c} \Bigl(1 - \frac{1}{2j+2}\Bigr)
,\]
which proves our claim.
\end{proof}

Let us mention that the algorithm originally designed in~\cite{BBJ17} actually consisted, when exiting the tree from the rightmost leaf, to start all over from a single vertex, again and again, until for the first time one reaches a binary tree with size $n$ without any exit.
As argued there, this is (nearly) optimal in terms of random number generation, informally because resampling steps mostly occur when the tree is small. 
Compared to the present dynamics, this is equivalent to conditioning not to require resampling before time $n$ and the previous lemma shows that this indeed produces a uniform random binary tree with size $n$. Note that the same holds if one conditions instead the dynamics to require resampling for the first time precisely at time $n$.

In these pages, we shall only use the algorithm with resampling a uniform random vertex when exiting the tree.
The reason comes from the observation that resampling at time $n$, although independent of the tree at time $n$, is however not independent of the tree at time $n-1$.
In particular, if one conditions the dynamics to not require resampling before time $n$, then Lemma~\ref{lem:indep_tree_failure} does show that $T_n$ has the uniform distribution, however $T_k$ for $k<n$ does \emph{not} have the uniform distribution and in the scaling limit, this does \emph{not} yield Brownian CRT's before time $1$.

\subsection{The dynamics along the spine between two resampling steps}
\label{subsec:description_dynam}

Let us now describe precisely how the marked tree is transformed between two consecutive resampling steps.
Equivalently, instead of starting the dynamics from a single vertex, let it start from an arbitrary marked binary tree $(t,x)$, and let us run it until the first resampling step, at time $\Resamp = \inf\{k \ge 1 \colon I_k = 1\}$.

The statements and proofs of this section should become intuitive once one has a clear view of how the dynamics work.
A moment of thought leads to the following description, which is represented in Figure~\ref{fig:spine_snake}.
At the first step, the vertex $x$ is replaced by a new internal vertex; then $x$ and its progeny is pushed up by one generation, and a new leaf is created next to it. Suppose that this leaf is placed on opposite side of its parent and that we keep this leaf as the new marked vertex (each has a probability $1/2$ of occurring). Then after that, we create an entire new subtree rooted at this leaf before exiting it; at this moment, the mark lands precisely where $x$ was at time $0$. 
If at the first step we do not keep the mark on the leaf but instead we project on its parent, then the only difference is that the new subtree rooted at the leaf is reduced to just that leaf.

After that, the dynamics then continues in the same way:
We replace the current marked vertex by a new internal one, move everything above it up by one generation, and create a new leaf. If this leaf is placed on opposite side of its parent again, then again we next create an entire new subtree before reaching back again the original position of $x$. 

Hence, after a random geometric number, possibly zero, of subtrees created on opposite side of $x$, we replace the current vertex sitting at the original position of $x$ by a new internal vertex and place the leaf on the \emph{same side} as its parent. Now again we create after that an entire subtree rooted at that leaf (possibly reduced to just that leaf) before exiting it. The difference with the previous case is that, when exiting it this subtree, we do not land on the original position of $x$, but on its last ancestor which lies on opposite side, which we call a \emph{turning vertex}.

After that, the same story takes place at this turning vertex: we first create a random geometric number, possibly zero, of subtrees on the opposite side (in red in Figure~\ref{fig:spine_snake}), then exactly one subtree on the same side (in green in Figure~\ref{fig:spine_snake}), then move down to the next turning vertex, etc.~until we leave the entire tree, at time $\Resamp$.
Notice that a no time before $\Resamp$ does the dynamics  enter the subtrees that are attached on the ancestral line of $x$ in the original tree $(t,x)$, and thus leaves them completely intact.

Let us record for every $n \ge 0$:
\[S_n \enskip\text{the height of $X_n$ in $T_n$.}\]
We are going to see that this sequence gives a precise description of the process $(T_n, X_n)_{n\ge 0}$, similar to the role played by the function $g$ in the continuous dynamics informally described in the Introduction.
For the moment, from the previous description, we see that for every time $0 \le k < \Resamp$,
\[X_k \text{ is an ancestor of } x \text{ or } X_k=x
\iff
S_{k} = \min_{[0,k]} S.\]
In addition, this ancestor $X_k$ at a weak negative record of $S$ is either $x$ or a turning vertex along its ancestral line. Precisely, it is $x$ whenever $S_{k} = \min_{[0,k]} S = S_0$, and it is the $i$'th turning vertex (from top to bottom) when $S$ has made a total of $i$ strict negative records by time $k$.
See again Figure~\ref{fig:spine_snake} for a pictorial description.

\begin{figure}[h!]
    \centering
    \includegraphics[page=8, width=.9\linewidth]{les_dessins}
    \caption{
        Left: the initial marked tree $(t,x)$ with the turning vertices on its ancestral lines in large red.
    Middle: the height $(S_n)_n$ of the marked vertex in the dynamics. In gold are shown the instants where it sits at is running minimum.
    Right: the tree $T_\Resamp$ created by the dynamics at the first resampling step $\Resamp$. It has only visited the vertex $x$ and the turning vertices, where it has created subtrees coded by the excursions of $S$ above its minimum. 
    All coloured trees on the left are unchanged in the right picture.
    }
    \label{fig:spine_snake}
\end{figure}

In addition to being a natural quantity, let us motivate the study of $(S_n)_{n \ge 0}$ with the following observation:
at each step $j \ge 1$, the dynamics creates a new internal vertex and a leaf, and the internal vertex replaces $X_{j-1}$ so the height of this internal vertex, when created, is $S_{j-1}$, while the new leaf is placed at height $S_{j-1}+1$.
We shall see in a second step how these heights evolve in the subsequent trees $(T_{n+j})_{n \ge 1}$.

Before that, let us label these new vertices: $2j$ refers to the internal one and $2j+1$ the leaf, both created at step $j$. 
In this way, if the initial tree $t$ is composed of a single vertex labelled $1$, then this provides a labelling of all the vertices of $T_n$, from $1$ to $2n+1$, that is consistent as $n$ varies, and where internal vertices receive an even label and leaves receive an odd label. This labelling corresponds to the colour scheme in the simulations from Figure~\ref{fig:simu}.
The previous description of the ancestral lines yields the following result, for all initial trees $(t,x)$.

\begin{lem}\label{lem:ancetre_commun}
Let $1 \le i < j \le n$.
On the event that $\Resamp \ge n$, the most recent common ancestor of the internal vertices $2i$ and $2j$ in the tree $T_n$ is the internal vertex $2 (i \wedge_n j)$, where
\[i \wedge_n j = \sup\{k \in \{i, \dots, j\} \colon S_{k-1} = \min_{[i-1,k-1]} S\}.\]
\end{lem}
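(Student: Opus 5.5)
The plan is to restart the dynamics at time $i-1$. I would apply the description of Section~\ref{subsec:description_dynam} to the initial marked tree $(t,x) \coloneqq (T_{i-1}, X_{i-1})$. Since we work on $\{\Resamp \ge n\}$, no resampling occurs during steps $i, \dots, j$, so that description applies throughout. Two preliminary facts will be used repeatedly:
\begin{itemize}
\item One-step R\'emy growth only inserts vertices along edges and adds leaves, so an ancestor/descendant relation between two existing vertices in $T_m$ persists in all $T_{m'}$ with $m' \ge m$. It therefore suffices to establish the genealogy of $2i$, $2j$ and $2k$ in $T_j$; it then holds in $T_n$.
\item By the equivalence stated just before the lemma, for $k \ge i$ the vertex $X_{k-1}$ occupies the original position of $x$ or of one of its ancestors (a turning vertex) if and only if $S_{k-1} = \min_{[i-1,k-1]} S$.
\end{itemize}

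First I would show that every $k \in \{i,\dots,j\}$ with $S_{k-1} = \min_{[i-1,k-1]} S$ gives a vertex $2k$ that is an ancestor of, or equal to, $2i$. For $k = i$ this is trivial. For $k > i$, the vertex $2k$ is created at the position of $X_{k-1}$, which lies on the ancestral line of the original position of $x$. By induction on the record times, the current occupant of that position is either $2i$ or an ancestor of $2i$, since $2i$ itself replaced $x$. Inserting $2k$ there makes it the parent of that occupant, so $2k$ is an ancestor of $2i$. Applied with $k = k^* \coloneqq i \wedge_n j$, this gives $2k^* \preceq 2i$. In the degenerate case $k^* = j$, this already says that $2j$ is an ancestor of (or equal to) $2i$, hence is their most recent common ancestor.

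Next, assume $k^* < j$. At step $k^*$, the vertex $2k^*$ has two children: one whose subtree contains $2i$ (the previous occupant of the position), and the new leaf $2k^*+1$. I would inspect the four choices of side and mark at this step.
\begin{itemize}
\item If the mark is projected, then $X_{k^*}$ is either back at the same spine position (when the leaf is on the opposite side) or at the next turning vertex (same side). In both cases $S_{k^*}$ is again a weak running minimum, contradicting the maximality of $k^*$, since $k^*+1 \le j$ would qualify.
\item Hence the mark is kept on the leaf $2k^*+1$. For every $m \in (k^*, j]$ we have $S_{m-1} > \min_{[i-1,m-1]} S$, so the dynamics never returns to the spine during steps $k^*+1, \dots, j$. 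It therefore stays inside the subtree rooted at $2k^*+1$, in which the excursion of $S$ above its minimum builds the new subtree.
\end{itemize}
Consequently $2j$, created at $X_{j-1}$, is a descendant of $2k^*+1$. Meanwhile $2i$ lies in the other child subtree of $2k^*$, so the most recent common ancestor of $2i$ and $2j$ is exactly $2k^*$.

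The main obstacle is bookkeeping rather than ideas: making rigorous the claim that, between two returns to the running minimum, the marked vertex stays inside the subtree hanging from the last created leaf. I would formalise this by induction on $m$. The inductive statement is that if $S_{m-1} > \min_{[i-1,m-1]} S$, then $X_{m-1}$ is a strict descendant of the leaf created at the last weak record time, and the height of $X_{m-1}$ minus the height of that record vertex equals $S_{m-1} - \min S$. The projection rule moves the mark to its last ancestor on the opposite side. Together with the height identity, this shows that it can only reach the spine when $S$ returns to its minimum.
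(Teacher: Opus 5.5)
Your overall plan is the paper's: restart at time $i-1$, place $2(i\wedge_n j)$ on the ancestral line of $2i$, rule out a projection at step $k^*$ by maximality, and let the excursion of $S$ above its minimum build a subtree hanging from $2k^*$ that receives $2j$. The spine part, the case analysis at step $k^*$ and the transfer from $T_j$ to $T_n$ are fine.

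The gap is in the step you flag as the main obstacle. The new subtree is \emph{not} the subtree of the vertex $2k^*+1$. Both your claim that $2j$ is a descendant of $2k^*+1$ and your inductive invariant that $X_{m-1}$ is a strict descendant of that leaf are false. This is exactly the mechanism of Lemma~\ref{lem:distance_interne_feuille}: whenever the mark returns to the position where the leaf was created, a new internal vertex is inserted there, i.e.\ between $2k^*$ and $2k^*+1$, and the mark can then sit on it. For a concrete case, take $j=i+2$:
\begin{enumerate}
\item At step $i$, keep the mark on $2i+1$.
\item At step $i+1$, the vertex $2i+2$ takes the place of $2i+1$ as a child of $2i$. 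Put $2i+3$ on the side opposite to $2i+2$ and project, so that $X_{i+1}=2i+2$ and $S_{i+1}=S_{i-1}+1$.
\item Then $i\wedge_n (i+2)=i$, but $2j=2i+4$ is inserted above $2i+2$ and is therefore an \emph{ancestor} of $2i+1$.
\end{enumerate}
The invariant in fact already fails at $m-1=k^*$, where $X_{k^*}$ is the leaf itself.

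The repair is the device you already used on the spine: track a position whose occupant changes, rather than a fixed vertex. For $m\in(k^*,j]$, use as invariant that $X_{m-1}$ is a strict descendant of $2k^*$ lying in the child subtree of $2k^*$ that contains $2k^*+1$, and that $2k^*$ still has height $S_{k^*-1}=\min_{[i-1,m-1]}S$.
\begin{itemize}
\item It holds at $m-1=k^*$.
\item Insertions occur only at the mark, hence strictly below $2k^*$. So $2k^*$ keeps its height, and the new leaf stays in that child subtree.
\item The new mark is the new leaf or an ancestor of it. If it were $2k^*$ or one of its ancestors, then $S_m\le S_{k^*-1}$, a weak record at the index $m+1\le j$, contradicting maximality. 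Otherwise it lies strictly between $2k^*$ and the new leaf.
\end{itemize}
Hence $2j$, inserted at $X_{j-1}$, lies in that child subtree. Meanwhile $2i$ either is $2k^*$ itself (when $k^*=i$, a case your phrase ``the other child subtree'' does not cover) or lies in the other child subtree. This is what the paper means by ``creates a subtree rooted at $2(i\wedge_n j)$''.

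Finally, if $\Resamp=n=j$, then step $j$ \emph{is} a resampling step, contrary to what you write. This is harmless, since only steps up to $j-1$ matter.
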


\begin{proof}
Recall that at step $i$, the vertex $2i$ replaces the previous marked vertex $X_{i-1}$, at height $S_{i-1}$. At time $(i \wedge_n j) - 1$, since $S$ is at its minimum since time $i-1$, then the marked vertex $X_{(i \wedge_n j) - 1}$ is an ancestor of $2i$. Then at the next step we replace this marked vertex by $2(i \wedge_n j)$, and up to time $j$, the path $S$ remains strictly above this value, meaning that it creates a subtree rooted at $2(i \wedge_n j)$. This shows our claim when $n=j$. Between time $j$ and $\Resamp$, the dynamics never modifies the subtrees attached along the ancestral line of $j$, so the vertex $2(i \wedge_n j)$ will remain the most recent common ancestor of $2i$ and $2j$.
\end{proof}

\subsection{Height of the marked vertex}

Let us now describe the law of the sequence $(S_n)_{n \ge 0}$ recording the height of the successive marked vertices, when again we let the dynamics start from an arbitrary marked binary tree $(t,x)$.
The construction of the algorithm suggests that it does not evolve as a random walk since downward steps, in the case of projection, depend on the current tree structure. 
However besides this, it does exhibit independent increments.

\begin{defi}[coding the ancestral lineage]\label{def:snake}
Let $(t,x)$ be a binary tree with a marked vertex at height $h \geq 1$. For every $i=0, \dots, h-1$, let $a_i = 0$ if the ancestor of $x$ at height $h-i$ lies on the same side as its parent, and $a_i = 1$ if it lies on the opposite side.
Let $A = \sum_{i=0}^{h-1} a_i$.
For every $1 \le k \le  A$, let $p_{k}= 1+\inf\{j \ge 0 \colon \sum_{i=0}^j a_i \ge k\}$ and then $\ell_k = p_{k}-p_{k-1}$, where $p_0=0$.
Let finally $\ell_{A+1} = h+1-p_{A}$.
\end{defi}

Since the root lies on the right by convention, the sequence $(a_i)_{0 \le i \le h-1}$ allows to recover the left/right position of all the ancestors of $x$.
Those with $a_i=1$ are the children of the turning vertices of Figure~\ref{fig:spine_snake} from top to bottom.
The quantities $(\ell_k)_{1 \le k \le A+1}$ count the length of the intervals between these turning vertices, if we count $x$ and the root as turning vertices.

Let us introduce some notation for the next result.
Let $G$ be a positive geometric random variable, whose law is $\P(G=k)=2^{-k}$ for every $k \ge 1$.
Given the tree $(t,x)$, let $W$ denote a random walk with step distribution $2-G \in \Z_{\le 1}$ started at $W_0=h$, the height of $x$ in $t$. Then modify this path $W$ to create a path $W^x$ as follows: for every $1 \le k \le A+1$, change the value at the $k$'th time $n$ such that $W_n < \inf_{[0,n-1]} W$ and instead set the value of $W^x$ at this time to be equal to $h-p_k$. All the other increments are unchanged. The path $W^x$ then stops after its $A+1$'st strict negative record, by reaching the value $h-p_1-\ell_{A+1} = -1$ for the first time.

Let us run the dynamics starting from $(t, x)$ and until the first resampling step $\Resamp$, but without performing the resampling of the vertex at the end and simply declare $S_{\Resamp} = -1$.

\begin{prop}[The law of height process]
\label{prop:law_height_marked_point}
The path $(S_n)_{n \le \Resamp}$ has the same law as $W^x$.
\end{prop}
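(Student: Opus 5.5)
The plan is to analyse one step of the dynamics from a marked vertex $X_n$ at height $S_n$ and compute the conditional law of $S_{n+1}-S_n$ given the past. At each step the marked vertex is replaced by a new internal vertex and a new leaf $L_{n+1}$ is placed at height $S_n+1$, on the left or right with probability $1/2$. Then, with probability $1/2$, $X_{n+1}=L_{n+1}$, giving an increment $+1$. Otherwise $X_{n+1}$ is the projection of $L_{n+1}$: its last ancestor on the opposite side. If $L_{n+1}$ lies on the opposite side of its parent, this projection is the parent itself, giving increment $0$. If it lies on the same side, we descend the ancestral line until we meet an ancestor whose side is opposite to that of $L_{n+1}$.

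The key step is to understand the colours (left/right) of the ancestors of $X_n$ as seen from the dynamics. I would show by induction on $n<\Resamp$ that the ancestral line of $X_n$ splits into two parts.
\begin{itemize}
\item A \emph{fresh} part, lying strictly above $\min_{[0,n]}S$ and built during the current excursion of $S$ above its running minimum. There, each vertex's side relative to its parent was decided by an independent fair coin that has not yet been "read" by any downward move.
\item An \emph{old} part, consisting of the ancestral line of $x$ in $t$ below the current turning vertex. Its sides are the deterministic data $(a_i)$ of Definition~\ref{def:snake}.
\end{itemize}
This is exactly the description of Section~\ref{subsec:description_dynam}: the dynamics never enters the subtrees hanging off the ancestral line of $x$, and at a running minimum the mark sits at $x$ or at a turning vertex.

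With this in hand, a projection from a fresh leaf reads the fresh coins one by one. The number of steps down is $k$ with probability $2^{-k}$ as long as we stay within the fresh region, once the $1/2$ for "projection" is combined with the leaf's side coin. Altogether, the increment is $+1$ with probability $1/2$, $0$ with probability $1/4$, and $-j$ with probability $2^{-j-2}$, which is the law $2-G$. The reading of coins is memoryless: after a move down by $j$, the ancestors below are still unread fair coins. This is because the vertices that were read are those we passed and end above, and they no longer lie on the new ancestral line except for the landing vertex, whose side is itself a fresh coin. So, as long as $S$ does not go below its running minimum, the increments are i.i.d. with law $2-G$.

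When a downward move would overshoot the current running minimum $S_0-p_{k-1}$, the fresh coins run out and the walk reaches the old part. From the current turning vertex, the next ancestor on the opposite side is deterministic: the next turning vertex, at height $h-p_k$. So the step lands exactly there, and a strict record of the walk is replaced by the value $h-p_k$. This is precisely the modification defining $W^x$. I would formalise the coupling by letting $W$ be the walk of the coin-reading increments: at a strict record, the increment of $W$ is whatever the geometric would have been, but $S$ is forced to $h-p_k$. After $A+1$ records we exit through the rightmost leaf, so $S_\Resamp=-1$.

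The main obstacle is making the memorylessness rigorous, namely showing that the sides of the unread fresh ancestors are independent fair coins given the past of $S$. I would first try to handle this by revealing coins lazily, assigning a leaf's side only when a projection first examines it, and checking via Lemma~\ref{lem:ancetre_commun}-type bookkeeping that no coin is examined twice while relevant. A subtle point to check here is the distinction between the record step and a step that merely touches the running minimum.
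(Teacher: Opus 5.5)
Your proposal is correct and follows essentially the same route as the paper. The paper's proof is the lazy-revealing argument you describe: the side bit of each new leaf is appended unrevealed to the ancestral vector; a projection reads that vector downward until the first $1$, giving law $2-G$ when the bits are fresh i.i.d.\ ones; the entries of the landing vertex and its ancestors stay unread; and the deterministic $a_i$'s force the landing at $h-p_k$ at strict records. Your explicit coupling with $W$ (extra virtual bits once the projection reaches the old part, with strict-record times of $S$, $W$ and $W^x$ coinciding because the offset is constant between records) makes precise the paper's one-line conclusion that this ``corresponds to the modification $W^x$''.
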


\begin{proof}

Let us imagine that the $a_i$'s are unknown.
When performing the first one-step R\'emy growth starting from the tree $(t,x)$, the new leaf is placed one generation above $x$; let us not reveal right away if it is placed on the same side as $x$ or not but instead append this information to the previous unknown vector. Note that this entry is $0$ or $1$ with probability $1/2$ and is independent of the rest.

With probability $1/2$ we keep this leaf as the next marked vertex, in which case $S$ increases by $1$, whatever the side of this leaf and the unknown vector.
With a probability $1/2$ we project the mark from this leaf and now we start revealing our unknown vector, from the leaf we just added to the root, until we find an entry equal to $1$.
In this case $S$ makes a nonpositive increment, whose value equals $1$ minus the number of entries we revealed before the first $1$.
Notice that when we stop and find the ancestor that will be the next marked vertex, we have not revealed the $a_i$'s corresponding to this vertex and its ancestors in our vector.
We may then iterate this argument.

The description of the law of $S$ follows since, during the construction, the $0$'s and $1$'s that we add are i.i.d.~Bernoulli distributed with probability $1/2$ and independent of the rest, but the original $a_i$'s are deterministic, and this corresponds to the modification $W^x$ described above.
\end{proof}

Let us deduce an invariance principle from this description. Informally, we say that a vertex is \emph{good} if it admits the same law of large numbers along its spine as in a uniform random spine.

\begin{defi}[Good vertices]\label{def:good}
Let us fix once and for all a value $\mathfrak{s} \in (0,1/6)$.
Let $(t,x)$ be a binary tree with a marked vertex at height $h \geq 1$. 
We say that $(t,x)$ is \emph{$n$-good} if
\[h \leq n^{\mathfrak{s}+1/2}
\qquad\text{and}\qquad
\sup_{t \in [0,1]} \Big|\frac{h}{2} t -  \sum_{i=1}^{\lfloor h t\rfloor} a_i\Big| \leq n^{\mathfrak{s}+1/4},\]
where the $a_i$'s are as in Definition~\ref{def:snake}.
A non-marked binary tree $t$ is \emph{$n$-good} when $(t,x)$ is $n$-good for all $x \in t$.
\end{defi}

This property will be in force for all points considered in our dynamics.

\begin{lem}[All good]\label{lem:all_good}
The following events occur with a probability tending to $1$ as $n \to \infty$:
\begin{enumerate}
\item $T_0^{\, \bullet}, \ldots, T_n^{\, \bullet}$ are all $n$-good when $T_0^{\, \bullet}$ is a single vertex;
\item $T_0^{\, \bullet}, \ldots, T_{\Resamp-1}^{\, \bullet}$ are all $\max(n,\Resamp)$-good when $T_0^{\, \bullet}=(t,x)$ is $n$-good.
\item $T_{\Resamp}$ is $\max(n,\Resamp)$-good when the non-marked tree $T_0$ is $n$-good.
\end{enumerate}
\end{lem}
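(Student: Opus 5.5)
Item 1 will follow from the uniformity of the marked trees and a union bound; items 2 and 3 will follow by running the same union bound along the dynamics, with Proposition~\ref{prop:law_height_marked_point} and the description of Section~\ref{subsec:description_dynam} controlling the spines.

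\textbf{Item 1.} By Lemma~\ref{lem:indep_tree_failure}, for each fixed $k \le n$ the marked tree $T_k^{\, \bullet}$ is uniform on marked binary trees of size $k$, so I will union bound over $k \le n$. For a uniform marked binary tree of size $k$, I will show that
\[\P\bigl(T_k^{\, \bullet} \text{ is not } n\text{-good}\bigr) \le C\exp(-c\, n^{\delta})\]
for some $\delta>0$, uniformly in $k\le n$; this beats the factor $n+1$.

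\emph{Height bound.} The height of a uniform vertex in a uniform binary tree of size $k$ has sub-Gaussian tails at scale $\sqrt{k}$. This follows for instance from the explicit Catalan-type formula for the height distribution, or from tail bounds on the height of the tree. Hence $\P(h > n^{\mathfrak s+1/2}) \le C\e^{-c n^{2\mathfrak s}}$.

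\emph{Spine bound.} The left/right sequence along the spine of a uniform marked binary tree is i.i.d.\ fair coins given the height. Indeed, by the symmetry of each branchpoint (swapping the two subtrees) the sides of the ancestors are i.i.d.\ uniform, and the increments $a_i$ are then also i.i.d.\ Bernoulli$(1/2)$. A maximal Hoeffding inequality then gives
\[\P\Bigl(\sup_{m\le h}\bigl|\textstyle\sum_{i\le m}a_i - m/2\bigr| > n^{\mathfrak s+1/4}\Bigr) \le 2\e^{-2n^{2\mathfrak s+1/2}/h}.\]
On the event $h\le n^{\mathfrak s+1/2}$ this is at most $2\e^{-2n^{\mathfrak s}}$. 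This settles item 1.

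\textbf{Item 2.} Here I use the spine description. Before $\Resamp$, the marked vertex at time $k$ has an ancestral line made of two parts. The lower part is the portion of the original spine of $x$ strictly below the current turning vertex, and it is untouched. The upper part has been created by the dynamics; its sides are the fresh i.i.d.\ fair coins revealed in the proof of Proposition~\ref{prop:law_height_marked_point}. The original part is a suffix (from the root) of the original $a$-sequence, so it inherits the deviation bound $n^{\mathfrak s+1/4}$ from $n$-goodness of $(t,x)$, up to a bounded additive error coming from the shift of indices. The new part is a sequence of fair coins, to be handled by Hoeffding as in item 1. Concatenating, the total deviation is at most $2n^{\mathfrak s+1/4}$ plus lower-order terms. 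This is why the threshold for the good property must leave some slack, and I expect the comparison to use $\max(n,\Resamp)$ for exactly this room.

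The heights are controlled by Proposition~\ref{prop:law_height_marked_point}: $S$ is a centred walk with geometric-tailed increments (mean $2-\E G=0$) run for at most $N:=\max(n,\Resamp)$ steps. Its supremum is therefore at most $h+N^{1/2+\mathfrak s/2}$ with overwhelming probability, uniformly over the random horizon, which I will handle by a union bound over dyadic values of $\Resamp$. The number of fresh coins on the current spine is at most $S_k-\min S+1$, so Hoeffding applies to them with the same exponential bounds.

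\textbf{Item 3.} At $\Resamp$ the new marked point is uniform in $T_\Resamp$. Every vertex of $T_\Resamp$ either lies in an unchanged subtree hanging off the original structure, or was created by the dynamics. Its spine is therefore a concatenation of an original spine, which is good because $T_0$ is $n$-good, and fresh pieces of the kind treated in item 2.

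\textbf{Main obstacle.} The delicate point will be bookkeeping the constants. The thresholds do not double at each concatenation, since only one original spine and one fresh segment are ever concatenated; the additive constant is absorbed by comparing $n^{\mathfrak s+1/4}$ against the fresh-coin fluctuation $N^{1/4+\mathfrak s/2}\ll N^{\mathfrak s+1/4}$. Uniformity over the random time $\Resamp$, possibly much larger than $n$, is obtained by summing the exponentially small bounds over all $N\ge n$.
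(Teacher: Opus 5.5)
Your item~1 is the paper's argument. For item~2 you take a different but valid route. The paper observes that the new subtree containing $X_k$ is, conditionally on its size $n_k\le\Resamp$, a uniform marked binary tree: apply Lemma~\ref{lem:indep_tree_failure} to the sub-dynamics started from the last leaf created at the running minimum. Item~1 then applies verbatim, heights included. You instead use the lazy-revealing stack from the proof of Proposition~\ref{prop:law_height_marked_point}, in which the top $S_k-\min_{[0,k]}S$ entries of the spine of $X_k$ are i.i.d.\ fair coins given $(S_j)_{j\le k}$. This is more hands-on: you must control the excursions of $S$ over a random horizon yourself, which uniformity gives for free.

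\textbf{First gap: the constants.} Your resolution of what you call the main obstacle is wrong. A suffix of an $n$-good spine does not inherit the bound $n^{\mathfrak s+1/4}$ up to an additive constant. Its centred partial sums are increments of those of the original sequence, so they can reach $2n^{\mathfrak s+1/4}$. Nor does $\max(n,\Resamp)$ give any slack on $\{\Resamp\le n\}$. By Proposition~\ref{prop:cv_hauteur_entre_resampling} this event keeps probability bounded below when $h\asymp\sqrt n$, and on it even ``$n^{\mathfrak s+1/4}$ plus lower-order terms'' exceeds the threshold.

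In fact no bookkeeping can save the statement as written. Take $h=\lfloor\sqrt{2n}\rfloor$ and an $n$-good spine whose centred partial sums descend to about $-n^{\mathfrak s+1/4}$, then hit a $1$, so that the next vertex down is a turning vertex $V$, then climb to about $+n^{\mathfrak s+1/4}$. The mark visits $V$ before $\Resamp$, and at that moment its spine is exactly this suffix. Its deviation is about $2n^{\mathfrak s+1/4}$, which exceeds $\max(n,\Resamp)^{\mathfrak s+1/4}$ on $\{\Resamp\le n\}$.

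So the lemma must be read up to a universal constant, i.e.\ $C\max(n,\Resamp)$-good. The same holds in item~3, where for instance original descendants of $x$ have their heights increased by the number of weak records. This weakening is harmless, since the proof of Theorem~\ref{thm:cv_BBJ_entre_deux_resampling} already works with $Cn$-good trees. The paper's proof makes the same silent simplification when it asserts that the ancestral line of $V_k$ is $n$-good and concatenates it with the $n_k$-good new subtree. Still, the specific justification you give does not hold.

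\textbf{Second gap: item~3.} The description ``original spine followed by fresh pieces'' is correct for $X_k$ at time $k$. It is not correct for vertices of $T_\Resamp$ lying above one of the insertion positions $x,V_1,\dots,V_A$. These include original descendants of $x$, original subtrees hanging from the spine of $x$ above $V_A$, and new vertices created before later insertions below them.

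Each such spine receives, in its interior, a block of new internal vertices of geometric length at every insertion position below it. Moreover, a block of length one flips the side of the original vertex at that position, and hence the $a$-entry of its child. Item~2 says nothing about these spines.

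You need an extra estimate. For instance: the child of a turning vertex has $a$-entry $1$, so the block at each turning position changes $\sum(a_i-\tfrac12)$ by a centred, geometrically tailed amount, independently over positions. The block at $x$ adds only $O(1)$. A maximal inequality over the at most $h+1$ positions then adds only $O(\sqrt h\,n^{o(1)})$ to the deviation. The paper is equally terse at this point, but as stated your step does not go through.
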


\begin{proof}
\begin{enumerate}
\item Fix $k \le n$; we know that $T_k^{\, \bullet}$ is uniformly distributed, so the $a_i$'s corresponding to $X_k$ are i.i.d.~Bernoulli random variables with parameter $1/2$. We can then apply a large deviation result for the binomial random walk $(\mathrm{Bin}_n)_{n \ge 1}$, 
namely for every $N \ge 1$ and $t \ge 0$,
\[\P(\max_{1 \le i \le N} |\mathrm{Bin}_i-i/2| \ge Nt) \le 2 \e^{-2 Nt^2},\]
see e.g.~\cite{McD98} at the very end of Section $2$ there.
In addition, the height $H_k$ of $X_k$ divided by $\sqrt{k}$ has sub-Gaussian tails (for binary trees, this can be traced back to~\cite[Theorem~1.3]{FGOR93}) so the probability that $X_k$ is not $n$-good decays as the exponential of some positive power of $n$. We conclude from a union bound.

\item Fix $k < \Resamp$. The marked vertex $X_k$ in $T_k^{\, \bullet}$ belongs to a subtree made only of vertices created since $T_0$ (in either green or red in Figure~\ref{fig:spine_snake}) and that is grafted on the ancestral line of $X_0$ in $T_0$, at a turning vertex $V_k$ (or $X_k$ is this turning vertex). The ancestral line of $V_k$ has not yet changed since time $0$ and is therefore $n$-good.
As for the new subtree grafted above $V_k$ and marked at $X_k$, the previous argument shows that it is $n_k$-good where $n_k \le \Resamp$ is its number of internal vertices.
More precisely, conditionally on $n_k$, this subtree has the law of $T_{n_k}^{\, \bullet}$ started from a single vertex and conditioned of the first resampling instant being greater than $n_k$. According to Lemma~\ref{lem:indep_tree_failure}, this tree is uniformly distributed so the argument above applies.

\item Recall that $T_{\Resamp}^{\, \bullet}$ is marked at an independent uniform random vertex. Due to the second item, if the initial marked vertex is $n$-good and the random marked vertex at time $\Resamp$ is either of its ancestors or belongs to one of the new subtrees that we created, then $T_{\Resamp}^{\, \bullet}$ is $\max(n,\Resamp)$-good. If all vertices of $T_0$ are $n$-good, then $T_{\Resamp}^{\, \bullet}$ is indeed $\max(n,\Resamp)$-good.
\vspace{-\baselineskip}
\end{enumerate}
\end{proof}

In the next proposition, for every $n \ge 1$, we let the dynamics start from a marked binary tree $(t^{(n)}, x^{(n)})$, not necessarily with $n$ internal vertices, and where $x^{(n)}$ has height $h_n$.
Let $\Resamp^{(n)}$ denote the first resampling time and $(S^{(n)}_k)_{k < \Resamp^{(n)}}$ record the height of the marked vertex before the first resampling step, started from $S^{(n)}_0 = h_n$.
For every $h>0$, let $B^h$ denote a Brownian motion started from $B^h_0=h$ and let $\resamp^h = \inf\{t>0 \colon B^h_t=0\}$ denote the first hitting time of $0$.

\begin{prop}[Quenched Brownian limit]\label{prop:cv_hauteur_entre_resampling}
Suppose that the trees $(t^{(n)},x^{(n)})$ are all $n$-good and that 
\[\frac{h_n}{\sqrt{2n}} \cv h>0.\]
Then the following convergence in distribution holds:
\begin{equation}\label{eq:cv_hauteur_brownien}
\Bigl(\frac{\Resamp^{(n)}}{n}, \Bigl(\frac{1}{\sqrt{2n}} S^{(n)}_{\lfloor n t \rfloor \wedge (\Resamp^{(n)}-1)}\Bigr)_{t \ge 0}\Bigr)
\cvloi
(\resamp^h, (B^h_{t \wedge \resamp^h})_{t \ge 0})
.\end{equation}
\end{prop}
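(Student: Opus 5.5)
We start from Proposition~\ref{prop:law_height_marked_point}, which identifies $(S^{(n)}_k)_{k \le \Resamp^{(n)}}$ with the modified walk $W^{x^{(n)}}$. We write $W^{x^{(n)}}$ as the unmodified random walk $W$ with i.i.d.\ steps $2-G$, started from $h_n$, plus corrections made only at the strict negative record times of $W$. The steps of $W$ are centred, since $\E[2-G]=0$, and have variance $\mathrm{Var}(G)=2$. Donsker's theorem therefore gives $\bigl(\frac{1}{\sqrt{2n}}W_{\lfloor nt\rfloor}\bigr)_{t\ge0} \to (B^h_t)_{t \ge 0}$ in distribution. The whole proof then reduces to two claims: the record corrections are uniformly negligible on the scale $\sqrt{n}$, and the exit time converges jointly with the path.

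The key observation concerns the $k$'th strict negative record of $W$. Since the overshoot below the previous minimum is $G-2$ minus the gap to the minimum, the running minimum $m_k$ after $k$ records satisfies $h_n - m_k = \sum_{j\le k}\gamma_j$. The record overshoots $\gamma_j \ge 1$ are i.i.d.: by the memorylessness of $G$, each new minimum lies exactly one below the old one with probability $1/2$, and in general the overshoot is geometric with mean $2$. In $W^{x}$, by contrast, the $k$'th record is placed at $h_n-p_k$, where $p_k$ is the $k$'th position of a $1$ among the $a_i$'s. Goodness of $(t^{(n)},x^{(n)})$ says that $\sum_{i\le m}a_i = m/2 + O(n^{\mathfrak s+1/4})$ uniformly in $m\le h_n$. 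Inverting this relation gives $p_k = 2k + O(n^{\mathfrak s+1/4})$ uniformly for $k\le A$, with $A \approx h_n/2$.

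We construct both paths on one probability space by feeding them the same sequence of non-record increments, which is legitimate because the modified path is still a walk with i.i.d.\ steps away from its records. Between records the two paths differ by the constant $\sum_{j\le k}\gamma_j - p_k$. The law of large numbers in its maximal form, with fluctuations of order $\sqrt{k}\log n$, gives $\sum_{j\le k}\gamma_j = 2k + O(\sqrt{n}\,\log n)$ for $k \le n^{\mathfrak s+1/2}$. Hence, uniformly over the times before exit,
\[
\sup_k \Bigl|W^{x^{(n)}}_k - \widetilde W_k\Bigr| = O\bigl(n^{\mathfrak s+1/4} + n^{1/4}\log n\bigr) = o(\sqrt n),
\]
where $\widetilde W$ is a genuine random walk started from $h_n$ with the law of $W$. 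We use $\mathfrak s<1/4$ here, which holds because $\mathfrak s<1/6$. This coupling step, in particular the bookkeeping that aligns the records of the two paths while keeping the other increments identical, is the main obstacle and the part that needs the most care.

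Given the uniform approximation, $\frac{1}{\sqrt{2n}}S^{(n)}_{\lfloor nt\rfloor}$ is close to a process converging to $B^h$ up to the exit time. For the exit times, $\Resamp^{(n)}$ is the first time $S^{(n)}$ reaches $-1$, that is the first time $W^{x}$ drops below its $(A+1)$'st record level. The first hitting time of $0$ is continuous at almost every Brownian path, because Brownian motion immediately goes strictly below $0$ after $\resamp^h$. Combining this with the uniform $o(\sqrt n)$ approximation gives $\Resamp^{(n)}/n \to \resamp^h$ jointly with the stopped paths. For the lower bound we note that the path cannot exit before reaching levels $o(\sqrt n)$; for the upper bound, once $B^h$ dips below $-\varepsilon$ the approximating walk has passed its last record level. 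Skorokhod's representation then yields the joint convergence~\eqref{eq:cv_hauteur_brownien}.
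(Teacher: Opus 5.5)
Your proposal is correct and follows the paper's own argument. The paper likewise compares $S^{(n)}$ with the walk $\widetilde S^{(n)}$ obtained by replacing the deterministic $a_i^{(n)}$'s by i.i.d.\ Bernoulli$(1/2)$ variables while keeping all other increments. With shared increments the two paths reflected at their running minima coincide, so the record times align automatically and the bookkeeping you flag as the main obstacle is not actually delicate. The paper then uses $n$-goodness to make the record levels agree up to $o(\sqrt n)$ and concludes by Donsker's theorem. You add detail the paper omits, namely the inversion $p_k = 2k + O(n^{\mathfrak s+1/4})$ and the continuity of the hitting time at Brownian paths, which gives the joint convergence of $\Resamp^{(n)}/n$. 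One slip: since there are at most $A+1 = O(\sqrt n)$ records, the fluctuation term should read $O(n^{1/4}\log n)$, not $O(\sqrt n\log n)$, as your display already reflects.
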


\begin{proof}
If the $a_i^{(n)}$'s were random i.i.d.~Bernoulli with parameter $1/2$ independent of the rest, then according to Proposition~\ref{prop:law_height_marked_point}, the path $S^{(n)}$ would just be a random walk started from $W_0=h_n$ and with step distribution $2-G$, stopped when hitting $\Z_{<0}$ and the convergence~\eqref{eq:cv_hauteur_brownien} would then follow since $2-G$ has mean $0$ and variance $2$.
In addition, in this case our assumption on the $a_i^{(n)}$'s holds by the Law of Large Numbers.

Now to derive a quenched version when the $a_i^{(n)}$'s are given,  Proposition~\ref{prop:law_height_marked_point} shows that the comparison with this random walk holds for the excursions of $S$ above its past minimum. The $a_i^{(n)}$'s only encode the negative overshoots $S^{(n)}_k - \min_{[0,k-1]} S^{(n)}$ at the instants of negative strict records. Precisely, the $j$'th such non-zero quantity is the length $\ell_j^{(n)}$ of the interval between two $1$'s in the sequence $(a_i^{(n)})_{i=0, \dots, h_n-1}$.
The $n$-good assumption implies
\[\sup_{t \in [0,1]} \frac{1}{h_n} \Bigl|S^{(n)}_{\lfloor n t \rfloor \wedge \Resamp^{(n)}} - \widetilde{S}\vphantom{S}^{(n)}_{\lfloor n t \rfloor \wedge \widetilde{\resamp}\vphantom{\Resamp}^{(n)}}\Bigr| \cvproba 0,\]
where $\widetilde{S}\vphantom{S}^{(n)}$ is obtained from ${S}\vphantom{S}^{(n)}$ by replacing the deterministic $a_i^{(n)}$'s by random i.i.d.~Bernoulli with parameter $1/2$ independent of the rest, but keeping the same increments otherwise, and $\widetilde{\resamp}\vphantom{\Resamp}^{(n)}$ is its hitting time of $\Z_{<0}$. We conclude from that case.
\end{proof}

\subsection{Stretching and gluing between two resampling steps}

In this section, we run again the BBJ dynamics $(T_k^{\, \bullet})_{k\geq 0}$ starting from an arbitrary marked binary tree $T_0^{\, \bullet} = (t,x)$. Recall that the vertices of a tree can naturally be identified with some vertices in the subsequent trees. We then consider here the sequence of the height of the initial marked vertex $x$ in the successive trees, which we denote by $(\mathrm{Ht}_{T_n}(x))_{n \ge 0}$. Recall that  $(S_n)_{n \ge 0}$ follows the height of the marked vertex and $\Resamp$ denotes the first resampling step.

\begin{lem}[Stretching]\label{lem:stretching_discret}
Fix $n \ge 1$ and suppose that the initial tree $(t,x)$ is $n$-good. Then the height of $x$ in the tree at time $n \wedge \Resamp$ satisfies:
\[\P\bigl( \bigl|\mathrm{Ht}_{T_{n\wedge \Resamp}}(x) - \mathrm{Ht}_t(x)  - (S_0 -\min_{[0,n \wedge \Resamp - 1]} S) \bigr| \geq n^{1/2 - \mathfrak{s}} \bigr)
\le 2 \exp\Bigl(- \frac{1}{32} n^{1/2 - 3 \mathfrak{s}}\Bigr).\]
\end{lem}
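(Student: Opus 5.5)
The plan is to count the increments of $\mathrm{Ht}(x)$ directly. By the description in Section~\ref{subsec:description_dynam}, the height of $x$ increases by exactly one at step $k+1$ if and only if the marked vertex $X_k$ is $x$ itself or one of its ancestors. In that case the new internal vertex $2(k+1)$ is inserted on the ancestral line of $x$, and every vertex above it moves up one generation. All other steps take place inside subtrees grafted off this line and leave $\mathrm{Ht}(x)$ unchanged. By the equivalence recalled before Lemma~\ref{lem:ancetre_commun}, this happens exactly at the times $k<\Resamp$ with $S_k=\min_{[0,k]}S$. Therefore
\[\mathrm{Ht}_{T_{n\wedge\Resamp}}(x)-\mathrm{Ht}_t(x)=\#\{0\le k< n\wedge\Resamp \colon S_k=\min_{[0,k]}S\}.\]
It remains to compare this count of weak records with the total descent $S_0-\min_{[0,n\wedge\Resamp-1]}S$.

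Next I would group these weak-record times according to the vertex where they occur. This vertex is $x$ or one of the turning vertices $v_1,v_2,\dots$ on its ancestral line. At each visit to such a vertex, one fair coin decides whether the new leaf is placed on the same side as its parent, independently of everything else (as in the proof of Proposition~\ref{prop:law_height_marked_point}).
\begin{itemize}
\item If the leaf is on the opposite side, the walk returns to the same vertex after the excursion.
\item If it is on the same side, the walk descends to the next turning vertex, dropping by $\ell_{j+1}$.
\end{itemize}
Hence the number of visits to each vertex is a $\mathrm{Geom}(1/2)$ variable $G_j\ge 1$ of mean $2$. These variables are i.i.d.\ and independent of the deterministic $a_i$'s.

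Let $N$ be the number of strict negative records made by time $n\wedge\Resamp-1$. Then the count equals $G_0+\dots+G_{N-1}$ plus a truncated last term $G_N'\le G_N$, while $S_0-\min S=\ell_1+\dots+\ell_N=p_N$. The comparison splits into two errors.
\begin{itemize}
\item \emph{Deterministic error.} Since $N$ turning vertices lie within distance $p_N\le h$ of $x$, we have $\sum_{i<p_N}a_i\approx N$ up to $O(1)$. The $n$-goodness of $(t,x)$ then gives $|p_N-2N|\le 2n^{\mathfrak{s}+1/4}+O(1)$.
\item \emph{Random error.} This is $\bigl|\sum_{j<N}G_j+G'_N-2N\bigr|$. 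We bound it uniformly over $N\le h\le n^{\mathfrak{s}+1/2}$ by a maximal Chernoff inequality for centred geometric sums, together with a tail bound on the single $G_N$.
\end{itemize}
The random error must stay below $\tfrac12 n^{1/2-\mathfrak{s}}$. The exponential-martingale maximal bound (Doob applied to $\e^{\lambda\sum(G_j-2)}$, using that $G-2$ has sub-exponential tails with variance $2$) gives a probability of order $\exp(-c\,n^{1-2\mathfrak{s}}/n^{1/2+\mathfrak{s}})=\exp(-c\,n^{1/2-3\mathfrak{s}})$. This is the announced rate, and the constant $1/32$ should come out of tracking $\lambda$ explicitly.

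I expect the main obstacle to be the bookkeeping between the two error terms and the constants. First, the goodness error $2n^{\mathfrak{s}+1/4}$ must be absorbed into $n^{1/2-\mathfrak{s}}$. This is immediate for $\mathfrak{s}<1/8$; for $\mathfrak{s}\in[1/8,1/6)$ I would instead have to exploit that the relevant sums run only up to $p_N$. A possible fallback is to redo the count without passing through $N$: each unit of descent in $S$ either follows a vertex with $a_i=0$ or one with $a_i=1$. One could then realise the count as a single martingale in the $a_i$-indexed time, so that goodness enters only through the bound $h\le n^{\mathfrak{s}+1/2}$. Second, one must justify that the coins at weak-record times are genuinely independent of the stopping at $n\wedge\Resamp$. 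I would handle this by optional stopping for the exponential martingale, taken along the sequence of visits rather than real time.
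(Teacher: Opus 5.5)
Your main line is the paper's own proof. The height increment of $x$ is the number of weak records of $S$, the descent $S_0-\min S$ equals $p_N$, and the discrepancy splits into the deterministic goodness error $|p_N-2N|$ plus a centred sum of i.i.d.\ variables $G_j-2$. The paper controls the latter with $\log\E[\e^{\lambda(G-2)}]\le 2\lambda^2$, threshold $\tfrac12 n^{1/2-\mathfrak{s}}$ and at most $n^{1/2+\mathfrak{s}}$ summands, which is where $1/32$ comes from. On the random part you are more careful than the paper. It conditions on the number of strict records as though it were independent of the $G_j$'s, and it ignores the incomplete last block. Since $N\le A\le h$ surely and the $G_j$'s are i.i.d., your maximal inequality over all $K\le h$, plus a union bound on $\max_{j\le h}G_j$ for $G'_N$, settles this directly for $n$ large enough; no optional stopping is needed.

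The obstacle you flag for $\mathfrak{s}\in[1/8,1/6)$ cannot be overcome, because the statement is false in that range, so your fallback cannot succeed. Take $h=\lfloor\sqrt n\rfloor$ and $m=\lfloor n^{\mathfrak{s}+1/4}\rfloor-2$. Let $a_i=0$ for $i<2m$, and let $a_{2m},a_{2m+1},\dots$ alternate $1,0,1,0,\dots$; this spine is $n$-good. With probability $1/4$ the first leaf goes on the same side as its parent and the mark is projected, so $G_0=1$ and $S_1=S_0-(2m+1)$. Afterwards $p_N=2N+2m-1$, hence $\mathrm{Ht}_{T_{n\wedge\Resamp}}(x)-\mathrm{Ht}_t(x)-(S_0-\min S)=-2m+\sum_{1\le j<N}(G_j-2)+G'_N$. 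The random part of this is $O(n^{1/4}\log n)$ with high probability. So the deviation is at least $(2-o(1))n^{\mathfrak{s}+1/4}\ge n^{1/2-\mathfrak{s}}$ with probability at least $1/4-o(1)$, while the claimed bound tends to $0$.

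No martingale reorganisation in $a_i$-indexed time can help. The number of turning vertices passed while descending by $p_N$ is $\sum_{i<p_N}a_i$, which is fixed by the tree, so the bias is deterministic. The paper's proof has exactly the same hole: it derives $|N^\ast_n-\tfrac12\Delta_n S|\le n^{\mathfrak{s}+1/4}$ and then tacitly treats this as negligible against $n^{1/2-\mathfrak{s}}$, which requires $\mathfrak{s}<1/8$. The fix is either to assume $\mathfrak{s}<1/8$, or to enlarge the threshold to $n^{1/2-\mathfrak{s}}+2n^{\mathfrak{s}+1/4}+2$. Your argument proves the enlarged version for every $\mathfrak{s}\in(0,1/6)$ and $n$ large. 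That threshold is still $o(\sqrt n)$, which is all the proof of Theorem~\ref{thm:cv_BBJ_entre_deux_resampling} uses.
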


\begin{proof}
The vertex $x$ only moves, and up by $1$, at a step $i \ge 1$ when the marked vertex $X_{i-1}$ is $x$ or an ancestor of $x$ (necessarily a turning vertex). This occurs precisely when the path $S$ lies at its running infimum. Hence
\[\mathrm{Ht}_{T_{n\wedge \Resamp}}(x) - \mathrm{Ht}_t(x) = \#\{k \in \{0, \dots, n\wedge \Resamp-1\} \colon S_k = \min_{[0,k]} S\}.\]
Our claim thus reduces to studying the local time at the minimum of the path $S$ and controlling the error
between the previous display and $S_0 -\min_{[0,n \wedge \Resamp-1]} S$.
Let us simplify the notation and set:
\begin{itemize}
    \item $\Delta_n S = S_0 -\min_{[0,n \wedge \Resamp-1]} S$ the lower deviation of $S$,
    \item $N_n = \#\{k \in \{0, \dots, n\wedge \Resamp-1\} \colon S_k = \min_{[0,k]} S\}$ the number of weak records,
    \item $N^\ast_n = \#\{k \in \{0, \dots, n\wedge \Resamp-1\} \colon S_k < \min_{[0,k-1]} S\}$ the number of strict records.
\end{itemize}
We thus aim to upper bound
\[|\Delta_n S - N_n| \le |\Delta_n S - 2 N^\ast_n| + |N_n - 2 N^\ast_n|.\]

To this end, first notice that the values of the strict records are encoded in the sequence $(a_i)_{0 \le i \le \mathrm{Ht}_t(x)-1}$ describing the ancestral line of $x$. Precisely, the absolute size of the strict descending record equal the length of the intervals in this sequence between two consecutive $1$'s. 
Definition~\ref{def:good} of $n$-good implies that
\[|N^\ast_n - \frac{1}{2} \Delta_n S| \le n^{\mathfrak{s}+1/4}.\]

Next recall from the proof of Proposition~\ref{prop:law_height_marked_point} that outside the strict negative records, the path $S$ behaves as the random walk whose increments are i.i.d.~with the same law as $2-G$ where $G$ has the geometric distribution with parameter $1/2$. By lack of memory of this law, a new negative record is strict or weak with probability $1/2$ independently of the rest. 
Therefore, conditionally on the number $N^\ast_n$ of strict records, the error
\[|N_n - 2 N^\ast_n|,\]
has the law of the absolute value of the sum of $N^\ast_n$ independent i.i.d.~random variables with law $G-2$, and this has sub-Gaussian tails.
This is easily obtained by exploiting the explicit Laplace transform 

\[\log \E[\e^{\lambda (G-2)}]
= - 2\lambda - \log(2 \e^{-\lambda}-1)
\le 2\lambda^2
,\]
for any $|\lambda| < 1/10$.
\end{proof}

\subsubsection*{Removing the leaves}

The walk $S$ describes the height of the marked vertices, which is also the height of the new internal vertices at the time they are created. 
Lemma~\ref{lem:stretching_discret} then shows how this height changes as the dynamics continues.
When dealing with the sole large scale geometry of a tree, leaves do not play any role since they lie at distance $1$ from their parent. 
However leaves play an important role in the resampling and cannot be neglected anymore since an internal vertex can have $0$, $1$, or $2$ offspring that are leaves.
The next lemma controls the graph distance between the leaf $2i+1$ and the internal vertex $2i$ that are created at the same step $i$, at the time $\Resamp$ of the first resampling step.

\begin{lem}\label{lem:distance_interne_feuille}
For every initial tree $(t,x)$, for every $i \ge 1$, the graph distance at the first resampling step after $i$ between the internal vertex $2i$ and the leaf $2i+1$ has the geometric distribution with parameter $1/2$.
\end{lem}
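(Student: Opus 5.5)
The plan is to follow the vertices $2i$ and $2i+1$ from step $i$ until $\Resamp$, using the spine description of Section~\ref{subsec:description_dynam}, and to show that their distance increases by $1$ a number of times distributed as $G-1$, so that the final distance is distributed as $G$. The basic observation comes from the proof of Lemma~\ref{lem:stretching_discret}. The distance between $2i$ and $2i+1$ increases only at a step $k>i$ where the marked vertex $X_{k-1}$ is $2i+1$ itself or a vertex strictly between $2i$ and $2i+1$. In that case the one-step R\'emy growth inserts a new internal vertex on the segment $[\![2i,2i+1]\!]$, and the distance increases by exactly $1$. At every other step the segment is untouched. Moreover, as explained in Section~\ref{subsec:description_dynam}, once the mark leaves the subtree above $2i$ by projecting to an ancestor of $2i$ or below, the dynamics never re-enters this subtree before $\Resamp$. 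So the final distance is $1$ plus the number of such \emph{visits} to the segment.

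Next I describe the visits as a renewal structure, using the revealing argument from the proof of Proposition~\ref{prop:law_height_marked_point}. Call the \emph{top position} the location currently occupied by the internal vertex immediately below $2i+1$, or by $2i+1$ itself. At step $i$, the new leaf $2i+1$ is created, and with probability $1/2$ the mark is projected rather than kept on the leaf. In the projected case I must check that the projection lands at $2i$ or below. In the kept case the mark sits on $2i+1$, which now plays the role of the vertex $x$ in Section~\ref{subsec:description_dynam}. Each growth at this position creates a new internal vertex and a fresh leaf with a fresh side coin. The mark then either stays on the new leaf, in which case a whole subtree is built on it before returning, or is projected. If the fresh leaf is on the opposite side, the mark comes back to the top position; if it is on the same side, the mark exits to the next turning vertex below. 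I expect this to show that at the end of each visit the mark returns to the segment with probability exactly $1/2$ and otherwise leaves below $2i$ for good. This probability should be independent of the past, because the relevant left/right labels along the segment are all fresh coin flips created after time $i$; this is the same mechanism that made the $0/1$ entries i.i.d.\ Bernoulli$(1/2)$ in Proposition~\ref{prop:law_height_marked_point}. The number of visits is then geometric: the distance equals $k$ with probability $2^{-k}$, which is the law of $G$.

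The main obstacle is the exact bookkeeping of sides on the segment between $2i$ and $2i+1$. I must show that the first turning vertex met when leaving the segment is never a vertex strictly between $2i$ and $2i+1$; otherwise the mark could stay trapped on the segment in a way that breaks the memoryless structure. The issue is that the side of $2i$ is inherited from $X_{i-1}$ and is not fresh, while each inserted internal vertex takes the side of the vertex it replaces. I plan to handle this by checking inductively that all internal vertices inserted on the segment have the side of the current top vertex. Then the only relevant fresh randomness is the side coin of each new leaf relative to its parent together with the keep/project coin, and each exit then occurs with probability $1/2$. Once this invariant is established, the geometric law follows from the independence of the successive coins, uniformly in the initial tree $(t,x)$. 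The event that the final exit is through the rightmost leaf, that is, the resampling at $\Resamp$, does not alter the count, since the distance is measured just at that time.
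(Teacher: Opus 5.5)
Your proposal follows the paper's proof: the final distance is $1$ plus the number of times the mark sits at the position first occupied by $2i+1$ (zero if the mark is projected at step $i$), and each such visit ends either with a return to that position or with a definitive exit to $2i$ or below, according to whether the new leaf falls on the opposite or the same side of its parent, a fresh fair coin, which gives the geometric law. One correction: that position is the bottom of the segment (the child of $2i$), not the vertex immediately below $2i+1$, because each new internal vertex is inserted there and pushes the previous ones up; with this picture your announced obstacle disappears, since the other vertices of the segment are descendants of the marked one and can never be reached by a projection, and no invariant on sides is needed because the same/opposite-side event is a fair coin whatever the side of the parent.
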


\begin{proof}
This follows from the description of the dynamics at the beginning of Section~\ref{subsec:description_dynam}.
At step $i$, we replace the current marked vertex $X_{i-1}$ with the new internal vertex $2i$ with two offspring: $X_{i-1}$ and the leaf $2i+1$ and then either we mark $X_i=2i+1$ or we project the mark on an ancestor, with probability $1/2$ each. In the latter case, even if this ancestor is $2i$, the dynamics until the next resampling step will not affect the subtree formed by $2i$ and its descendants, so in particular $2i$ and $2i+1$ will stay at distance $1$ from each other.
If, on the contrary, we mark $X_i=2i+1$, then the dynamics will create first an entire subtree above this point, before exiting it and reaching $2i$ or one of its ancestors. In this case, the distance between $2i$ and $2i+1$ has then the same law as $1$ plus the height of $2i+1$ in this subtree. We claim that this height in the second case is geometrically distributed, which yields our claim.

Indeed, if $X_i=2i+1$ is the new leaf, then after that, the height of this vertex grows by $1$ exactly when the current marked vertex $X_j$ is at the position  where $X_i$ was, and all that matters is, at these moments, if the new leaf is placed on opposite side of its parent, in which case we will eventually reach back this position, or on the same side, in which case we will eventually exit the subtree without modifying further the height of the vertex $2i+1$.
Since the probability to place the leaf on the left or on the right each time we are at the root is $1/2$, independently of the rest, the final height of $2i+1$ in this subtree is indeed geometrically distributed.
\end{proof}

\section{Scaling limit of the tree dynamics}
\label{sec:contiuousdynamics}

In this section, we shall make formal the construction of the continuum dynamics described in the Introduction.
This will be the analogue of the coding by the height $(S_n)_n$ of the marked vertex, using a continuous function $g$.
We shall then prove, as stated in Theorem~\ref{thm:cv_BBJ_resampling}, that this dynamics is the scaling limit of the discrete one, when the function $g$ is the Brownian motion, which appears as the scaling limit of $(S_n)_n$ in~\eqref{eq:cv_hauteur_brownien}.

\subsection{Preliminaries on continuum trees}

\label{sec:preliminaries}

Let us recall in this subsection the classical setting of continuum trees and refer to e.g.~\cite{LG05} for a gentle introduction.
A real tree, or $\R$-tree, or continuum tree, is a geodesic metric space $(\mathcal{T}, d)$ without any cycle.
We shall denote by $\llbracket a, b \rrbracket$ the geodesic path between $a$ and $b$.
Continuum trees considered in this  work will be compact, equipped with a Borel finite measure $\mu$ called the \emph{mass measure}, and a distinguished element $\rho$ called the \emph{root}.
We write $\boldsymbol{\mathcal{T}}$ for such a space $(\mathcal{T}, d, \mu, \rho)$, which we call a rooted measured tree, or sometimes simply a tree.
We shall also often distinguish a second element $x$ of the tree (which may or not coincide with $\rho$), and we shall then write $\boldsymbol{\mathcal{T}}^{\, \bullet} = (\boldsymbol{\mathcal{T}}, x) = (\mathcal{T}, d, \mu, \rho, x)$, which we call a marked and rooted measured tree, or simply a marked tree.
This point $x$ will play the role of the marked point where the infinitesimal growth takes place.
We shall denote by $\mathrm{Ht}(z) = d(\rho, z)$ the height of any element $z \in \mathcal{T}$.

The space of such marked or non-marked trees can be equipped with a pseudo-distance, which is a variant of the Gromov--Hausdorff--Prokhorov pseudo-distance on measured compact metric spaces, taking into account the, one or two, distinguished points, as considered in e.g.~\cite[Section~2]{ABBGM17}.
A correspondence between two sets $A$ and $B$ is a subset $C \subset A \times B$ of pairs such that for every $a \in A$, there exists $b \in B$ such that $(a,b) \in C$ and conversely for every $b \in B$, there exists $a \in A$ such that $(a,b) \in C$.
Then the distance $d_{\mathrm{GHP}}^{\, \bullet}(\boldsymbol{\mathcal{T}}^{\, \bullet}_1, \boldsymbol{\mathcal{T}}^{\, \bullet}_2)$ between two marked trees $\boldsymbol{\mathcal{T}}^{\, \bullet}_1 = (\mathcal{T}_1, d_1, \mu_1, \rho_1, x_1)$ and $\boldsymbol{\mathcal{T}}^{\, \bullet}_2 = (\mathcal{T}_2, d_2, \mu_2, \rho_2, x_2)$ is the infimum of those $\lambda > 0$ such that:
\begin{itemize}
\item There exists a correspondence $\mathcal{C}$ between $\mathcal{T}_1$ and $\mathcal{T}_2$ that contains both pairs $(\rho_1, \rho_2)$ and $(x_1, x_2)$ and that satisfies $|d_1(a_1, b_1) - d_2(a_2, b_2)| \le 2 \lambda$ for all pairs $(a_1, a_2), (b_1, b_2) \in \mathcal{C}$;
\item There exists a measure $\nu$ on $\mathcal{T}_1 \times \mathcal{T}_2$ such that if $\nu_i$ is the projection of $\nu$ on $\mathcal{T}_i$, then $\|\nu_1-\mu_1\| + \|\nu_2-\mu_2\| + \nu(\mathcal{C}^c) \le \lambda$, where $\|\cdot\|$ denotes the total variation norm.
\end{itemize}

It can be checked that this function $d_{\mathrm{GHP}}^{\, \bullet}(\cdot, \cdot)$ defines a pseudo-distance on marked trees.
In addition two trees lie at distance zero from each other exactly when there exists an isometric map from one to the other that fixes the root and the marked point, as well as the measure.
We let finally $\mathbb T^{\, \bullet}$ denote the corresponding quotient space, which is separable and complete when equipped with the distance $d_{\mathrm{GHP}}^{\, \bullet}$.
We refer again to~\cite{ABBGM17} and references therein for details.

When dealing with rooted but not marked trees, this definition is adapted in the obvious way by only requiring that a correspondence contains the roots $(\rho_1, \rho_2)$, and we let $(\mathbb T, d_{\mathrm{GHP}})$ denote the corresponding (again, separable and complete) quotient space.

A usual simple way to construct such continuum trees consists in taking a continuous function $h$ from a compact interval $[0, \zeta]$ to $\R$, and considering the pseudo-distance:
\begin{equation}\label{eq:def_distance_arbre_fct}
d_h(s,t) = d_h(t, s) = h(s) + h(t) - 2 \min_{[s,t]} h
,\end{equation}
for every $0 \le s \le t \le \zeta$.
If one identifies points of $[0,\zeta]$ that lie at distance $0$, then one obtains a compact metric space, which in this case is a continuum tree~\cite[Theorem~2.1]{DLG05}. We shall call this tree $(\mathcal{T}_h, d_h)$ the one \emph{coded by $h$}. The mass measure $\mu_h$ in this case is  the image of the Lebesgue measure on $[0,\zeta]$ by the canonical projection, and the tree is naturally rooted at the image of  $\mathrm{argmin}~h$ by this projection. 
The mapping $h \mapsto \mathcal{T}_h$ is Lipschitz-continuous for the uniform topology on continuous functions and the distance $d_{\mathrm{GHP}}$, see~\cite[Lemma~2.3]{DLG05} for the GH distance and~\cite[Proposition~2.10]{ADH13} for the GHP distance (with no distinguished root, but the generalisation is straightforward).

Finally, the Brownian Continuum Random Tree, or here simply Brownian tree or CRT, is the random tree $\boldsymbol{\mathcal{T}}_{2\mathbb e}$ coded in the previous sense by \textbf{twice} the standard Brownian excursion $\mathbb e$. Note that its mass measure is a probability measure, with total mass $1$.
We shall also consider CRT's with mass $m>0$, which are coded in the same way by a Brownian excursion with duration $m$.
The scaling property of Brownian motion shows that if $\boldsymbol{\mathcal{T}} = (\mathcal{T}, d, \mu, \rho)$ is such a CRT with mass $m$, then
\[\frac{1}{\sqrt{m}} \cdot \boldsymbol{\mathcal{T}} = (\mathcal{T}, m^{-1/2} d, m^{-1} \mu, \rho)\]
has the same law as $\boldsymbol{\mathcal{T}}_{2\mathbb e}$.
We shall also consider marked versions of such trees, in which case the extra marked point $x$ will be an independent point sampled from the mass measure, that is, simply the image in the tree of an independent uniform random time.

\subsection{Gluing and stretching}

Let us now make formal the continuous tree growth presented in the Introduction, which informally starts from a marked tree and follows the evolution of a point whose height variation is given by the function $g$:
\begin{itemize}
    \item When $g$ decreases, we go down the tree towards the root and, doing so, we \emph{stretch} the distances along the ancestral line of the current point by a factor of $2$;
    \item When $g$ increases, we \emph{graft} new pieces of the tree given by the contour of $g$.
\end{itemize}
We start with the case where there is no `resampling', i.e.~when the height of the point stays nonnegative.
Let us refer to Figure~\ref{fig:def_Phi} for a graphical representation of the construction.

\subsubsection*{Without resampling}

Let $\boldsymbol{\mathcal{T}}^{\, \bullet} = (\mathcal{T}, d, \mu, \rho, x)$ be a marked measured rooted tree and let  $g \colon [0, \zeta] \to \R$ be a continuous function with compact support such that 
\begin{equation}\label{eq:noreasmpling}
\mathrm{Ht}(x) - g(0) + \min_{[0, \zeta]} g \ge 0.
\end{equation}
Define the following modification:
\begin{equation}\label{eq:def_contour_modifie_continu}
h(t) = 2 g(t) - \min_{[t, \zeta]} g
,\end{equation}
for every $t \in [0, \zeta]$. 
Following~\eqref{eq:def_distance_arbre_fct}, let us then define the modified tree distance:
\begin{equation}\label{eq:def_distance_arbre_fct_modif}
D^g = d_h
\qquad\text{where }
h \text{ is given by~\eqref{eq:def_contour_modifie_continu}}
\text{ and }
d_h \text{ is given by~\eqref{eq:def_distance_arbre_fct}}
.\end{equation}
As for~\eqref{eq:def_distance_arbre_fct}, it is easy to see that $D^g$ is a pseudo-distance on $[0,\zeta]$ and its quotient is a measured real tree denoted by $\widetilde{\boldsymbol{\mathcal{T}}}_g^{\, \bullet}$, which should not be confused with the tree coded by $g$ that uses $d_g$. Note that the mass measure on $\widetilde{\boldsymbol{\mathcal{T}}}_g^{\, \bullet}$ has total mass $\zeta$. 
This tree comes with three natural distinguished points: the projection $\tilde{\rho}$ of the argmin of $g$ where the tree is rooted, the projection $\tilde{x}$ of $0$ (which will be identified with $x$), and that $\tilde{\zeta}$ of $\zeta$ (which will be the marked point of the new tree). 

Let $\delta = g(0) - \min g$ and for every $r \in [0, \delta]$, let then $\tilde{x}_r \in \widetilde{\boldsymbol{\mathcal{T}}}_g^{\, \bullet}$ be the projection of the time $\inf\{t \in [0,\zeta] \colon g(0)-g(t) = r\}$. Because of the modification~\eqref{eq:def_distance_arbre_fct_modif} the path $\llbracket \tilde{x}_0, \tilde{x}_\delta\rrbracket = \llbracket \tilde{x}, \tilde{\rho}\rrbracket$ describes the ancestral line of $\tilde{x}$, with $\tilde{x}_r$ lying at distance $2r$ from $\tilde{x}$ for every $r \in [0,\delta]$.

\begin{figure}[h!]
    \centering
    \includegraphics[page=7, width=.95\linewidth]{les_dessins}
    \caption{The construction of the continuum stretching and gluing mapping $\Phi$ from~\eqref{eq:def_Phi}.}
    \label{fig:def_Phi}
\end{figure}

Similarly in the tree $\mathcal{T}$, for every $r \in [0,\delta]$, let $x_r$ denote the point on the path from $x$ to the root $\rho$ that lies at distance $r$ from $x$. Its existence follows because Assumption~\eqref{eq:noreasmpling} states that $d(x,\rho) \ge \delta$.
Then let us double in $\mathcal{T}$ the distances along the path from $x$ to $x_\delta$ without any change anywhere else and then, for every $r \in [0, \delta]$, let us identify the point $x_r$ to the point $\tilde{x}_r \in \widetilde{\boldsymbol{\mathcal{T}}}_g^{\, \bullet}$.
We let finally
\begin{equation}\label{eq:def_Phi}
\Phi(\boldsymbol{\mathcal{T}}^{\, \bullet},g)
\end{equation}
be the metric space obtained after these identifications.
An easy exercise shows that this is still a tree. 
We equip $\Phi(\boldsymbol{\mathcal{T}}^{\, \bullet},g)$ with the natural measure, in which the measures on  $\boldsymbol{\mathcal{T}}^{\, \bullet}$ and $\widetilde{\boldsymbol{\mathcal{T}}}_g^{\, \bullet}$ are simply (pushed-forward by the stretching and) added together.
We shall keep the root $\rho$ of $\mathcal{T}$ as the root of $\Phi(\boldsymbol{\mathcal{T}}^{\, \bullet},g)$ and forget the point $x=\tilde{x}$ and instead mark the point $\tilde{\zeta}$. 

If $f_1, f_2$ are two continuous functions defined on possibly different intervals $[0,\zeta_1]$ and $[0,\zeta_2]$ respectively and with values in the same metric space $(E,d)$, then we extend the uniform distance as:
\[d_\infty(f_1, f_2) = |\zeta_1 - \zeta_2| + \sup_{s \in [0,1]} d(f_1(s \zeta_1), f_2(s \zeta_2)).\]
We shall use this notion in both cases $E=\R$ and $E = \mathbb T^{\, \bullet}$.

\begin{prop}[Continuity of $\Phi$]\label{prop:Phi_continue}
Let $\boldsymbol{\mathcal{T}}^{\, \bullet}_1= (\boldsymbol{\mathcal{T}}_1,x_1)$ and $\boldsymbol{\mathcal{T}}^{\, \bullet}_2= (\boldsymbol{\mathcal{T}}_2,x_2)$ be two marked measured compact real trees, and let $g_1 \colon [0, \zeta_1] \to \mathbb{R}$ and $g_2 \colon [0, \zeta_2] \to \mathbb{R}$ be two continuous functions with compact support satisfying $g_1(0)=g_2(0)=0$ and
\[\mathrm{Ht}(x_1)+ \min_{[0,\zeta_1]}g_1 \geq 0 
\qquad\text{and}\qquad
\mathrm{Ht}(x_2)+ \min_{[0,\zeta_2]}g_2 \geq 0.\]
Then there exists a universal constant $C>0$ such that
\[ d_{\mathrm{GHP}}^{\, \bullet}(\Phi(\boldsymbol{\mathcal{T}}_1^{\, \bullet}, g_1), \Phi(\boldsymbol{\mathcal{T}}_2^{\, \bullet}, g_2))
\le  C \bigl(d_{\mathrm{GHP}}^{\, \bullet}(\boldsymbol{\mathcal{T}}_1^{\, \bullet}, \boldsymbol{\mathcal{T}}_2^{\, \bullet}) + d_\infty(g_1, g_2)\bigr).\]
\end{prop}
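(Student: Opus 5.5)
The plan is to build an explicit correspondence and coupling measure between the two glued trees out of the ones given by the hypotheses, checking that distortion and mass discrepancy grow only by a universal factor. Fix $\lambda > d_{\mathrm{GHP}}^{\, \bullet}(\boldsymbol{\mathcal{T}}_1^{\, \bullet}, \boldsymbol{\mathcal{T}}_2^{\, \bullet})$ with a correspondence $\mathcal{C}$ containing $(\rho_1,\rho_2)$ and $(x_1,x_2)$, and a coupling measure $\nu$ as in the definition. Set $\varepsilon = d_\infty(g_1,g_2)$. Write $\delta_i = -\min g_i$ and $h_i(t) = 2g_i(t) - \min_{[t,\zeta_i]} g_i$ as in~\eqref{eq:def_contour_modifie_continu}. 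The first step is elementary: both maps $g \mapsto \min_{[t,\zeta]} g$ and $g\mapsto h$ are $3$-Lipschitz for $d_\infty$ after time-rescaling to $[0,1]$, and $|\delta_1-\delta_2|\le \varepsilon$. Combined with the Lipschitz continuity of $h \mapsto \mathcal{T}_h$ (\cite[Lemma~2.3]{DLG05}, \cite[Proposition~2.10]{ADH13}), this gives a correspondence $\widetilde{\mathcal{C}}$ between $\widetilde{\boldsymbol{\mathcal{T}}}_{g_1}$ and $\widetilde{\boldsymbol{\mathcal{T}}}_{g_2}$ with distortion $O(\varepsilon)$ and Prokhorov-type mass error $O(\varepsilon)$. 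We take $\widetilde{\mathcal{C}}$ to be the image of $\{(s\zeta_1, s\zeta_2)\}$, so it automatically contains the pairs $(\tilde x_1,\tilde x_2)$, $(\tilde\zeta_1,\tilde\zeta_2)$, and pairs of points of the spines $\tilde x_{1,r}$, $\tilde x_{2,r'}$ with $|r-r'| = O(\varepsilon)$.

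Next, handle the stretched old trees. In $\Phi(\boldsymbol{\mathcal{T}}_i^{\, \bullet}, g_i)$ a point $a$ of $\mathcal{T}_i$ has new distances obtained from old ones by adding the length of its geodesics' overlap with $\llbracket x_i, x_{i,\delta_i}\rrbracket$. That is, $d'_i(a,b) = d_i(a,b) + |\llbracket a,b\rrbracket \cap \llbracket x_i, x_{i,\delta_i}\rrbracket|$. The overlap length is an explicit function of the projections of $a,b$ onto the geodesic $\llbracket x_i,\rho_i\rrbracket$, namely of $d_i(a,x_i)$, $d_i(a,\rho_i)$, $d_i(b,\cdot)$ and $\delta_i$, and it is Lipschitz in these quantities. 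Because $\mathcal{C}$ contains both $(\rho_1,\rho_2)$ and $(x_1,x_2)$, these quantities differ by $O(\lambda)$ for corresponding points, and $\delta_i$ differ by $\varepsilon$. So $\mathcal{C}$ has distortion $O(\lambda+\varepsilon)$ for the stretched metrics. This is exactly why the marked GHP distance, and not the plain one, is needed.

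The main obstacle is gluing the two correspondences consistently, and specifically the cross terms $|d'_1(a_1,\tilde b_1) - d'_2(a_2,\tilde b_2)|$ with $a$ in an old tree and $\tilde b$ in a new one. I would take $\mathcal{C}\cup\widetilde{\mathcal{C}}$, viewed in the glued spaces. In $\Phi$, the geodesic from $a$ to $\tilde b$ passes through the spine. Let $p_i(a)\in\llbracket x_i,\rho_i\rrbracket$ denote the projection of $a$, and note that the spine is identified with $\llbracket \tilde x_i,\tilde\rho_i\rrbracket$ via $x_{i,r}\sim\tilde x_{i,r}$. Then the cross distance equals $d'_i(a,x_{i,r_a}) + D^{g_i}(\tilde x_{i,r_a},\tilde b)$ when the branch point lies between $x_{i,r_a}$ and $\tilde x$, with a symmetric formula otherwise. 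Each term is controlled by the two previous steps once we know that $r_a$, the spine parameter of $p_i(a)$ truncated to $[0,\delta_i]$, is $O(\lambda+\varepsilon)$-stable across $i=1,2$. This stability again follows from the correspondence matching roots and marks. One must also check the case where $a$'s projection lies below $x_{i,\delta_i}$, where the branch points differ between the two trees because $\delta_1\neq\delta_2$. There I would use that a shift of $\varepsilon$ in $\delta$ only perturbs distances by $O(\varepsilon)$.

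For the measures, the coupling is $\nu + \tilde\nu$, where $\tilde\nu$ is the pushforward of Lebesgue measure on $\{(s\zeta_1,s\zeta_2)\}$ suitably normalised. The mass errors add up to at most $\lambda + O(\varepsilon)$, since stretching pushes measures forward bijectively. Finally, the marks $\tilde\zeta_i$ and roots $\rho_i$ are matched by construction. Letting $\lambda\downarrow d_{\mathrm{GHP}}^{\, \bullet}(\boldsymbol{\mathcal{T}}_1^{\, \bullet}, \boldsymbol{\mathcal{T}}_2^{\, \bullet})$ yields the bound with a universal $C$, say $C=10$.
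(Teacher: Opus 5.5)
Your proposal is correct and follows the same decomposition as the paper's (very terse) proof: Lipschitz dependence of $\widetilde{\boldsymbol{\mathcal{T}}}_g$ on $g$ through $h=2g-\min_{[\cdot,\zeta]}g$, then control of the stretching because the correspondence matches both roots and marks, then Lipschitz gluing along the spine. Your Gromov-product formulas for the overlap length and for the cross distances $d'(a,x_{r_a})+D^{g}(\tilde x_{r_a},\tilde b)$ supply details the paper leaves implicit.

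One small imprecision: the image of $\{(s\zeta_1,s\zeta_2)\}$ need not contain exact spine-to-spine pairs, nor the pair $(\tilde\rho_1,\tilde\rho_2)$. It only contains pairs within $O(\varepsilon)$ of such pairs; for instance, $D^{g}(t,t^\ast)=h(t)-\min g$ for $t^\ast$ an argmin of $g$, and this quantity is $4\varepsilon$-stable. That approximate matching is all your cross-term estimate actually uses.
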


\begin{proof}
In the previous subsection, we mentioned that the mapping $h \mapsto \mathcal{T}_h$ was Lipschitz-continuous~\cite[Proposition~2.10]{ADH13}.
So is the stretching by a factor $2$ along a branch applied to $\boldsymbol{\mathcal{T}}^{\, \bullet}$, and combining these two arguments, we obtain that the mapping $g \mapsto \widetilde{\boldsymbol{\mathcal{T}}}_g$ is Lipschitz-continuous as well.
Finally the gluing operation is Lipschitz-continuous.
\end{proof}

In addition to continuity, it is easy to check from the construction that $\Phi$ enjoys the following semi-group property:
let $g \oplus h$ be the concatenation of the functions $g$ and $h$, and assume that $g \oplus h$ is continuous and satisfies~\eqref{eq:noreasmpling}, then
\[
\Phi(\Phi(\boldsymbol{\mathcal{T}}^{\, \bullet},g), h) = \Phi(\boldsymbol{\mathcal{T}}^{\, \bullet}, g \oplus h)
.\]
This allows to define a process version of the previous construction.
Let us denote by $C([0,\zeta], \mathbb T^{\, \bullet})$ the space of continuous functions on $[0,\zeta]$ with values in the space of rooted and marked measured real trees, equipped as usual with the uniform distance $d_\infty$.
Given $\boldsymbol{\mathcal{T}}^{\, \bullet}$ and $g \colon [0,\zeta] \to \R$ satisfying~\eqref{eq:noreasmpling}, let us define
a function $\Psi(\boldsymbol{\mathcal{T}}^{\, \bullet},g) \in C([0,\zeta], \mathbb T^{\, \bullet})$ by setting for every $t \in [0,\zeta]$:
\[\Psi(\boldsymbol{\mathcal{T}}^{\, \bullet},g)_t = \Phi(\boldsymbol{\mathcal{T}}^{\, \bullet},g|_{[0,t]})
.\]
Continuity of $\Phi$ shows that this function is indeed continuous in $t$: simply apply Proposition~\ref{prop:Phi_continue} to the same trees $\boldsymbol{\mathcal{T}}^{\, \bullet}_1 = \boldsymbol{\mathcal{T}}^{\, \bullet}_2 = \boldsymbol{\mathcal{T}}^{\, \bullet}$ and to the functions $g|_{[0,t]}$ and $g|_{[0,t+s]}$ with $s>0$ small.

\subsubsection*{With resampling}

Let us now incorporate random resamplings in the previous construction.

We shall extend the definition of $\Psi$ without requiring Condition~\eqref{eq:noreasmpling}, but instead that the marked point $x \in \mathcal{T}$ be different from the root $\rho$.
Also, we now let $g$ be a continuous real-valued function, defined on the infinite half-line $[0, \infty)$ instead of a compact interval. The construction below is adapted in the obvious way if $g$ is only defined on a compact interval $[0,\zeta]$: simply stop at time $t=\zeta$.
Without loss of generality, let us also assume that $g(0)=0$.

Let us define by induction a \emph{random c\`adl\`ag} function $\Psi(\boldsymbol{\mathcal{T}}^{\, \bullet}, g) = (\boldsymbol{\mathcal{T}}^g_t)_{t \ge 0} = (\mathcal{T}_t,d_t, \mu_t, \rho_t,x_t)_{t \ge 0}$.
First let $\resamp_0 = 0$ and $\boldsymbol{\mathcal{T}}^g_0 = \boldsymbol{\mathcal{T}}^{\, \bullet}$.
Fix $i \ge 1$, suppose constructed $(\boldsymbol{\mathcal{T}}^g_t)_{t \le \resamp_{i-1}}$ and let us construct $(\boldsymbol{\mathcal{T}}^g_t)_{\resamp_{i-1} < t \le \resamp_i}$. Let us denote by $\delta_i$ the distance between the  root $\rho_{\resamp_{i-1}}$ and the marked point  $x_{\resamp_{i-1}}$ in $\boldsymbol{\mathcal{T}}^g_{\resamp_{i-1}}$, which we assume is nonzero.
Let then
\[\resamp_i = \inf\{t \ge 0 \colon g(t) < -(\delta_1 + \dots + \delta_i)\}.\] 
Let $g_i$ be the function defined by $g_i(s) = g(\resamp_{i-1}+s)-g(\resamp_{i-1})$ for every $s \in [0,\resamp_i-\resamp_{i-1}]$, so $g_i$ satisfies~\eqref{eq:noreasmpling} with the tree $\boldsymbol{\mathcal{T}}^g_{\resamp_{i-1}}$. 
We then define for  $s \in [0,\resamp_i-\resamp_{i-1})$:
\[\boldsymbol{\mathcal{T}}^g_{\resamp_{i-1}+s} = \Phi(\boldsymbol{\mathcal{T}}^g_{\resamp_{i-1}}, g_i|_{[0,s]}).\] 
By taking left-limit we can then define the tree $\boldsymbol{\mathcal{T}}^g_{\resamp_{i}-} \in \mathbb{T}^{\, \bullet}$ but  the marked point $x_{\resamp_i-}$ coincides with the root $\rho_{\resamp_i-}=\rho$, so the function $\Phi$ cannot apply anymore. We then perform a \textbf{resampling step} (this uses extra-randomness and destroys the continuity for the marked GHP topology): we define $\boldsymbol{\mathcal{T}}^g_{\resamp_{i}}$ as the same rooted tree as $\boldsymbol{\mathcal{T}}^g_{\resamp_{i}-}$, simply replacing the marked point $x_{\resamp_i-}=\rho_{\resamp_i-}=\rho$ by a random point $x_{\resamp_i}  \in \boldsymbol{\mathcal{T}}^g_{\resamp_{i}-}$  \textbf{different from the root}, sampled independently of the past proportionally to the measure $\mu_{\resamp_i-}(\cdot \cap (\mathcal{T}^g_{\resamp_i-} \setminus \{\rho\}))$. Observe that the latter is not the null measure, even if the original measure $\mu$ on $\boldsymbol{\mathcal{T}}$ was concentrated on the root $\rho$.
These times $\resamp_i$ are called the \emph{resampling instants}.

It could be that $\resamp_i=\infty$ for some $i \ge 1$, in which case the dynamics continues forever using the function $\Phi$ and without resampling the marked point anymore.
The next lemma rules out however the possibility that resampling instants accumulate, so the random dynamics is well-defined for every time $t \ge 0$.

\begin{lem}[$\Psi$ is well-defined]\label{lem:pas_d_accumulation}
With the above notation, almost surely either there exists $n \ge 1$ such that $\resamp_n=\infty$ or $\resamp_n \to \infty$ as $n \to \infty$. In particular $\Psi$ is a well-defined random continuous function of marked trees.
\end{lem}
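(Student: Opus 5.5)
The plan is to argue by contradiction on the event that all the $\resamp_n$ are finite but converge to a finite limit $\resamp_\infty$. By construction $g(\resamp_n)=-(\delta_1+\dots+\delta_n)$ (continuity of $g$ at the first passage below a level). If $\resamp_n\uparrow\resamp_\infty<\infty$, continuity of $g$ gives $g(\resamp_n)\to g(\resamp_\infty)$, so $\sum_i\delta_i<\infty$ and in particular $\delta_i\to0$. Thus it suffices to show that, almost surely, it is \emph{not} the case that $\delta_i\to0$ while the $\resamp_n$ stay bounded. Note that $g$ is deterministic here; the only randomness lies in the resampled points, and $\delta_{i+1}$ is the height of a point sampled from $\mu_{\resamp_i-}$ restricted to $\mathcal{T}^g_{\resamp_i-}\setminus\{\rho\}$.

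The first step is a monotonicity property of the construction. Under $\Phi$, the old points are kept, distances along the ancestral line of the marked point are only stretched (by a factor $2$), and new subtrees are glued. Hence the height of a given point of the tree at time $s$ is non-decreasing in $t\ge s$, and this persists across resampling instants since resampling does not change the rooted measured tree. Moreover the measure at time $t$ dominates the push-forward of the measure at time $s$, with total mass $\mu(\mathcal{T})+t$. I will use this to define a limiting object on $\{\resamp_\infty<\infty\}$: either the left limit $\boldsymbol{\mathcal{T}}^g_{\resamp_\infty-}$, obtained as a Cauchy limit using Proposition~\ref{prop:Phi_continue} and the semi-group property, or more simply the increasing union of the measured sets with the limiting height function $\mathrm{Ht}_\infty(z)=\lim_t \mathrm{Ht}_t(z)$. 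This union carries a finite measure $\mu_\infty$ of total mass at most $\mu(\mathcal{T})+\resamp_\infty$. By monotonicity of heights, for every $i$ and $\varepsilon>0$,
\[\mu_{\resamp_i-}\bigl(\{z\colon 0<\mathrm{Ht}_{\resamp_i-}(z)<\varepsilon\}\bigr)\le \mu_\infty\bigl(\{z\colon 0<\mathrm{Ht}_\infty(z)<\varepsilon\}\bigr)=:m(\varepsilon),\]
and $m(\varepsilon)\downarrow0$ as $\varepsilon\downarrow0$ by $\sigma$-additivity, since the sets decrease to the empty set. The delicate point is that only the root has height $0$, and the root is excluded from the resampling measure.

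The second step is a lower bound on the normalising mass. For $i\ge1$, the non-root mass at time $\resamp_i-$ is at least the mass of the trees glued since time $0$ that do not sit at the root, and this is at least $\resamp_1>0$ (the glued pieces have Lebesgue mass, and points at height exactly $0$ form a null set, apart from a possible atom at $\rho$ which is excluded anyway). Then conditionally on the past,
\[\P(\delta_{i+1}<\varepsilon\mid \mathcal{F}_{\resamp_i})\le \frac{m(\varepsilon)}{\resamp_1}=:q(\varepsilon),\]
with $q(\varepsilon)<1/2$ for $\varepsilon$ small. There is a subtlety: $m(\varepsilon)$ depends on the future, namely the limiting tree, which itself depends on later resamplings. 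To avoid this, I will instead bound with a quantity measurable up to time $T$ and work on the events $\{\resamp_\infty\le T\}$ for fixed $T$. Concretely, I will use the points existing at time $\resamp_i-$ together with the fact that heights can only grow, so the bad set at time $\resamp_i-$ is contained in the analogous set computed for that tree alone, whose mass tends to $0$ with $\varepsilon$ uniformly on a suitable event. This is the step I expect to be the main obstacle. If the uniformity fails, I will fall back on a L\'evy conditional Borel--Cantelli argument, applied along a sequence $\varepsilon_k\downarrow0$.

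Finally, I will conclude as follows. Iterating the conditional bound gives $\P(\delta_i<\varepsilon\ \forall i\ge N,\ \resamp_\infty\le T)\le \prod_{i\ge N} q(\varepsilon)=0$ for each $N$. Taking a union over $N\ge1$ and $T\in\mathbb{N}$ shows that almost surely $\delta_i\not\to0$ on $\{\resamp_\infty<\infty\}$, which contradicts the first paragraph. Therefore either some $\resamp_n=\infty$ or $\resamp_n\to\infty$. Between consecutive resampling instants $\Psi$ is continuous by Proposition~\ref{prop:Phi_continue}, so $\Psi(\boldsymbol{\mathcal{T}}^{\, \bullet},g)$ is a well-defined c\`adl\`ag process on all of $\R_+$.
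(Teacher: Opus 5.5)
Your first paragraph is correct and is a tidy substitute for the paper's use of uniform continuity of $g$: on $\{\resamp_n\uparrow\resamp_\infty<\infty\}$ you get $\sum_i\delta_i=-g(\resamp_\infty)<\infty$. The gap is in the second step, which you flag yourself as the main obstacle and do not resolve.

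\begin{itemize}
\item Your displayed inequality does not follow from monotonicity of heights. Since $\mathrm{Ht}_\infty\ge\mathrm{Ht}_{\resamp_i-}$, a point that is low at time $\resamp_i-$ may be high in the limit. Monotonicity only gives the reverse inclusion $\{0<\mathrm{Ht}_\infty<\varepsilon\}\cap\mathcal{T}_{\resamp_i-}\subset\{0<\mathrm{Ht}_{\resamp_i-}<\varepsilon\}$.
\item More seriously, $m(\varepsilon)$ depends on the whole future, including all later resampled points. So $q(\varepsilon)$ is not $\mathcal{F}_{\resamp_i}$-measurable, and you cannot iterate conditional probabilities to get $\prod_{i\ge N}q(\varepsilon)=0$. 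Your proposed fixes (``uniformly on a suitable event'', falling back on conditional Borel--Cantelli) never name an adapted quantity that would work.
\item The lower bound $\resamp_1$ on the non-root mass is false for a general continuous $g$. A plateau of $g$ at level $-\delta_1$ before $\resamp_1$ turns Lebesgue mass into an atom at $\rho$. The non-root mass is still at least $\mathrm{Leb}\{t<\resamp_1\colon g(t)>-\delta_1\}>0$, but this has to be argued.
\end{itemize}

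The missing idea is that you are aiming at a harder estimate than needed. You do not need resampling near the root to be unlikely; you only need resampling far from the root to have a probability bounded below, and that bound can be made deterministic.

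As in the paper, pick $t_0\in(0,\resamp_1)$ and $\delta,\varepsilon>0$ such that the points of $\boldsymbol{\mathcal{T}}^g_{t_0}$ at height greater than $\delta$ carry mass at least $\varepsilon$. This is possible because $x\neq\rho$, even if $\mu$ is concentrated on $\rho$. Heights only increase, and measures are only pushed forward and added to. So at every resampling instant $\resamp_i\le T$, the image of this set still has mass at least $\varepsilon$ and lies at height greater than $\delta$, while the total mass is at most $\mu(\mathcal{T})+T$.

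Hence, conditionally on the past, $\P(\delta_{i+1}>\delta)\ge\varepsilon/(\mu(\mathcal{T})+T)$ on $\{\resamp_i\le T\}$. If there were infinitely many resamplings before $T$, a conditional Borel--Cantelli argument (or simply $(1-p)^k\to0$) would almost surely give $\delta_i>\delta$ infinitely often. That contradicts $\sum_i\delta_i<\infty$ from your first paragraph. The paper reaches the same contradiction differently: each such resampling forces a time gap, by uniform continuity of $g$ on $[0,T]$.

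With this replacement your outline closes, and you no longer need the limiting object $\mu_\infty$ or the function $m(\varepsilon)$.
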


\begin{proof}
Recall that we assume that in the initial tree $\boldsymbol{\mathcal{T}}$, the marked point $x$ is different from the root $\rho$. Then even if the initial measure $\mu$ is concentrated on $\rho$, for some small time $t>0$, the dynamics has not required yet any resampling and the measure $\mu_t$ on the current tree is not concentrated on the root, so for some $\varepsilon, \delta> 0$, the mass of points at distance greater than $\delta$ from the root is at least $\varepsilon$.

Then at each resampling instant $\resamp_i$ before time $K$, the probability to resample the new point $x_{\resamp_i}$ in the image of the previous set is bounded below by $\varepsilon/(\mu(\mathcal{T})+K) > 0$. When this occurs, since the dynamics only increases the distances, then $x_{\resamp_i}$ lies at distance greater than $\delta$ from the root in the current tree; since the function $g$ is uniformly continuous on $[0,K]$, then in this case, the next resampling instant cannot be arbitrarily close to the present one. Consequently we can only resample finitely many times before time $K$ a new marked point at height greater than $\delta$, and since this has a probability bounded away from $0$ to occur at each resampling step, then there can only be finitely many resampling steps at all before time $K$.
\end{proof}

This construction gives rise to a process of growing random trees $(\boldsymbol{\mathcal{T}}^g_t)_{t \ge 0}$ started from any tree with a marked point distinct from the root. Beware though, the law of this process is \emph{not} continuous with respect to the function $g$. This is due mainly to the behaviour of $g$ near the resampling instants, e.g.~if it makes a local minimum or a plateau where it stays constant.
Such pathological behaviours however do not occur when considering a random function such as Brownian motion $B$ as we will do below.

\subsection{Scaling limit of the discrete dynamics}

Let us finally prove that this continuum construction with an independent Brownian motion as driving function is the scaling limit of the discrete dynamics as claimed in Theorem~\ref{thm:cv_BBJ_resampling}.

Let us first consider the dynamics between two consecutive resampling steps.
The framework is the same as in Proposition~\ref{prop:cv_hauteur_entre_resampling}: for every $n \ge 1$, we let the discrete dynamics $(T^{(n)}_k, X^{(n)}_k)_{k \ge 0}$ start from a marked binary tree $(T^{(n)}_0, X^{(n)}_0) = (t^{(n)}, x^{(n)})$, not necessarily with $n$ internal vertices, and where $x^{(n)}$ has height $h_n$. Assume that $t^{(n)}$ is equipped with a finite measure $\mu^{(n)}$ on its vertices.
Let $\Resamp^{(n)}$ denote the first resampling time.
We shall equip the tree $T^{(n)}_k$ with the measure $\mu^{(n)}_k$ given by the image of $\mu^{(n)}$ on the vertices of $t^{(n)}$ and a mass $1$ on each new vertex.

\begin{thm}\label{thm:cv_BBJ_entre_deux_resampling}
Assume that the initial non-marked tree $t^{(n)}$ is $n$-good in the sense of Definition~\ref{def:good} and that, as a marked tree, it satisfies the convergence in the marked GHP topology on $\mathbb{T}^{\, \bullet}$:
\[\Bigl(\frac{1}{\sqrt{2n}} t^{(n)}, \frac{1}{2n} \mu^{(n)}, x^{(n)}\Bigr) \cv \boldsymbol{\mathcal{T}}^{\, \bullet} = (\mathcal{T}, d, \mu, \rho, x),\]
for some limit tree in which $\mathrm{Ht}(x)>0$.
Let $B=(B_t)_{t \ge 0}$ be an independent standard Brownian motion and $\resamp = \inf\{t>0 \colon B_t<-\mathrm{Ht}(x)\}$.
Then we have
\begin{equation}\label{eq:conv1}
\Bigl(\frac{\Resamp^{(n)}}{n}, \Bigl(\frac{1}{\sqrt{2n}} T^{(n)}_{\lfloor tn\rfloor \wedge (\Resamp^{(n)}-1)}, \frac{1}{2n} \mu^{(n)}_{\lfloor tn\rfloor \wedge (\Resamp^{(n)}-1)}, X^{(n)}_{\lfloor tn\rfloor \wedge (\Resamp^{(n)}-1)}\Bigr)_{0 \leq t < \lfloor \Resamp^{(n)}/n \rfloor}\Bigr)
\cvloi
(\resamp, (\Psi(\boldsymbol{\mathcal{T}}^{\, \bullet}, B)_{t})_{0 \leq t < \resamp})
,\end{equation}
for the Skorokhod topology.
\end{thm}

\begin{proof}
The strategy of the proof consists in replacing in $T^{(n)}_k$ the graph distance $d^{(n)}_k$ and the measure $\mu^{(n)}_k$ by the discrete analogue of the construction of $\Psi$, using the height process $S^{(n)}$ of the marked vertex, that we shall denote by $\tilde{d}^{(n)}_k$ and $\tilde{\mu}^{(n)}_k$ respectively, and proving that the GHP distance between $(T^{(n)}_k, \frac{1}{\sqrt{2n}} d^{(n)}_k, \frac{1}{2n} \mu^{(n)}_k, X^{(n)}_k)$ and $(T^{(n)}_k, \frac{1}{\sqrt{2n}} \tilde{d}^{(n)}_k, \frac{1}{2n} \tilde{\mu}^{(n)}_k, X^{(n)}_k)$ tends to $0$ in probability as $n \to \infty$ uniformly over $k \le \Resamp^{(n)}$.
We then prove our claim for this modified tree-process, using the convergence of $S^{(n)}$ and $\Resamp^{(n)}$ from Proposition~\ref{prop:cv_hauteur_entre_resampling}.

Fix $\varepsilon>0$. 
With probability at least $1-\varepsilon$, the first resampling time $\Resamp^{(n)}$ is bounded by some large constant $C$ times $n$ (thanks to~\eqref{eq:cv_hauteur_brownien}) and all marked trees $(T^{(n)}_k, X^{(n)}_k)$ for $k < \Resamp^{(n)}$ are $Cn$-good, thanks to Lemma~\ref{lem:all_good}.
We henceforth assume that these events do occur.

\textsc{Step 1: modifying the image of the initial tree.}
Let us first focus on the image of the vertices of the initial tree $t^{(n)}$ in the tree $T^{(n)}_k$.
Recall the description of $t^{(n)}$ as the ancestral lineage of $x^{(n)}$ along which are grafted subtrees that will not change at all until time $\Resamp^{(n)}$ (included).
Only the ancestral lineage of $x^{(n)}$ will be stretched by grafting new subtrees on it.
Precisely, recall that at each time of a new strict negative record by $S^{(n)}$, the discrete tree is marked at the next turning vertex along the ancestral line of $x^{(n)}$; since these vertices are $n$-good by our assumption that the initial tree is, then Lemma~\ref{lem:stretching_discret} applies to show that, up to a $o(\sqrt{n})$-error, their ancestral lineage will be stretched at time $k$ by the negative of the infimum of $S^{(n)}$ since their visit.
A union bound shows that the amount of stretching between $x^{(n)}$ and the first turning vertex, then between two turning vertices, is approximately, up to a $o(\sqrt{n})$ factor, given by the differences in the successive strict negative records of $S^{(n)}$, uniformly over time up to $\Resamp^{(n)}$.
Let $\tilde{d}^{(n)}_k$ denote the distance between the vertices of $t^{(n)}$ after this stretch using $(S^{(n)}_i)_{i \le k}$; then, 
\[\frac{1}{\sqrt{n}} \sup_{k \le \Resamp^{(n)}} \sup_{a,b \in t^{(n)}} |d^{(n)}_k(a, b) - \tilde{d}^{(n)}_k(a, b)| \cvproba 0.\]
As far as measures are concerned, let us define $\tilde{\mu}^{(n)}_k(a) = \mu^{(n)}_k(a) = \mu^{(n)}(a)$ for every $a \in t^{(n)}$, without any modification.

\textsc{Step 2: removing the new leaves.}
Let us turn to the subtrees that are created by the dynamics.
At time $k \le \Resamp^{(n)}$, they form a forest $\widetilde{T}\vphantom{T}^{(n)}_k$ that we equip with the graph distance $d^{(n)}_k$ inherited from $T^{(n)}_k$ (equivalently, we keep $x^{(n)}$ and its ancestral line) and the measure $\mu^{(n)}_k$ that gives a mass $1$ to every vertex.
As in the previous section, let us order these vertices from $2$ to $2k+1$ in order of appearance: the vertices $2i$ and $2i+1$ are respectively the internal one and the leaf created at step $i$. For definiteness, let $0$ and $1$ also denote the vertex $2$.

We shall focus on internal vertices only and define $\overline{2i} = \overline{2i+1} = 2i$ for every $1 \le i \le k$, i.e.~we project each new leaf on the internal vertex with preceding label.
Define then $\overline{\mu}^{(n)}_k$ as the image of $\mu^{(n)}_k$ by this projection and $\overline{d}^{(n)}_k(i, j) = d^{(n)}_k(\overline{i}, \overline{j})$ for every $i, j \le 2k+1$.
According to Lemma~\ref{lem:distance_interne_feuille} the distance between the leaf $2i+1$ and the internal vertex $2i$ can only increase over time and, at time $\Resamp^{(n)}$, it has the geometric distribution with parameter $1/2$. We infer from a union bound (recall the Hoeffing inequality in the proof of Lemma~\ref{lem:stretching_discret}) that, uniformly over all time up to $\Resamp^{(n)}$, which we recall is at most $Cn$ with high probability, and uniformly over all leaves up to this time, their distance to the internal vertex with the preceding label is $o(\sqrt{n})$. In other words,
\[\frac{1}{\sqrt{n}} \sup_{k \le \Resamp^{(n)}} \sup_{a,b \in \widetilde{T}\vphantom{T}^{(n)}_k} |d^{(n)}_k(a, b) - \overline{d}^{(n)}_k(a, b)| \cvproba 0.\]
\textsc{Step 3: modifying the new internal vertices.}
Let us modify now the distances $\overline{d}^{(n)}_k$ on $\widetilde{T}\vphantom{T}^{(n)}_k$ as in the first step. 

In the forest $\widetilde{T}\vphantom{T}^{(n)}_k$, the distance between the corresponding internal vertices is given by the sum of their height in the tree $T^{(n)}_k$ minus the height of their last common ancestor. Recall that the height of an internal vertex $2i$ at the time it is created is given by the path $S^{(n)}_{i-1}$; also Lemma~\ref{lem:ancetre_commun} allows to find the last common ancestor using this path, and finally Lemma~\ref{lem:stretching_discret} shows how the height evolves after it is created. For the latter fact, recall that with high probability, the marked trees up to time $\Resamp^{(n)} \le Cn$ are all $Cn$-good.
Just as in Step 1, define then the distance $\tilde{d}^{(n)}_k$ between two internal vertices $2i$ and $2j$ of $\widetilde{T}\vphantom{T}^{(n)}_k$ as
\begin{equation}\label{eq:modif_dist_arbre_tilde}
\tilde{d}^{(n)}_k(2i,2j) = \bigl(2 S^{(n)}_{i-1} - \min_{[i-1, k]} S^{(n)}\bigr) + \bigl(2 S^{(n)}_{j-1} - \min_{[j-1, k]} S^{(n)}\bigr) - 2 \min_{i-1 \le \ell \le j-1} \bigl(2 S^n_\ell - \min_{[\ell, k]} S^{(n)}\bigr)
.\end{equation}
As in Step 1, according to Lemma~\ref{lem:stretching_discret}, with high probability this distance is close to $\overline{d}^{(n)}_k$ up to $o(\sqrt{n})$ error, uniformly in $k \le \Resamp^{(n)}$.
As for the measures, as previously let $\tilde{\mu}^{(n)}_k = \overline{\mu}^{(n)}_k$, without further modification.

\textsc{Step 4: comparison with the continuous dynamics.}
By definition of the GHP, using as correspondence the identity on the image of the vertices of $t^{(n)}$ and the projection $i \mapsto \overline{i}$ for the new vertices, the first two steps have shown that the GHP distance between the original tree-process
\[\Bigl(T^{(n)}_{\lfloor tn\rfloor \wedge \Resamp^{(n)}}, \frac{1}{\sqrt{2n}} d^{(n)}_{\lfloor tn\rfloor \wedge \Resamp^{(n)}}, \frac{1}{2n} \mu^{(n)}_{\lfloor tn\rfloor \wedge \Resamp^{(n)}}, X^{(n)}_{\lfloor tn\rfloor \wedge \Resamp^{(n)}}\Bigr)_{t \geq 0}\]
and the modified one
\[\Bigl(T^{(n)}_{\lfloor tn\rfloor \wedge \Resamp^{(n)}}, \frac{1}{\sqrt{2n}} \tilde{d}^{(n)}_{\lfloor tn\rfloor \wedge \Resamp^{(n)}}, \frac{1}{2n} \tilde{\mu}^{(n)}_{\lfloor tn\rfloor \wedge \Resamp^{(n)}}, X^{(n)}_{\lfloor tn\rfloor \wedge \Resamp^{(n)}}\Bigr)_{t \geq 0}\]
tends to $0$ in probability.
Let us finally argue that the latter converges to our desired limit $(\Psi(\boldsymbol{\mathcal{T}}^{\, \bullet}, B)_{t \wedge \resamp})_{t \geq 0}$.

Let us fix some time $t > 0$ and assume $t < \resamp$; recall that $\Psi(\boldsymbol{\mathcal{T}}^{\, \bullet}, B)_t$ is constructed from $\boldsymbol{\mathcal{T}}^{\, \bullet}$ by stretching the distances by a factor $2$ along the ancestral line of the marked point and gluing this path to a new tree $\widetilde{\boldsymbol{\mathcal{T}}}_t^{\, \bullet}$ coded by the pseudo-distance $D^g$ from~\eqref{eq:def_distance_arbre_fct_modif} where $g=(B_s)_{s \le t}$. Precisely $\widetilde{\boldsymbol{\mathcal{T}}}_t^{\, \bullet}$ is the quotient of the interval $[0,t]$ by the equivalence relation $D^g=0$. This provides a natural correspondence between $\widetilde{\boldsymbol{\mathcal{T}}}_t^{\, \bullet}$ and $\widetilde{T}\vphantom{T}^{(n)}_{\lfloor nt\rfloor}$ by letting correspond the image of a time $s \in [0,t]$ in $\widetilde{\boldsymbol{\mathcal{T}}}_t^{\, \bullet}$ and the vertex labelled $\lfloor ns \rfloor$ in $\widetilde{T}\vphantom{T}^{(n)}_{\lfloor nt\rfloor}$.
Comparing the formula~\eqref{eq:modif_dist_arbre_tilde} for the discrete distances and~\eqref{eq:def_distance_arbre_fct_modif} for the continuum one, and recalling that $S^{(n)}$ and $\Resamp^{(n)}$ converge after scaling to $B$ and $\resamp$ by Proposition~\ref{prop:cv_hauteur_entre_resampling}, we infer that, jointly, the processes $(\frac{1}{\sqrt{2n}} \widetilde{T}\vphantom{T}^{(n)}_{\lfloor nt\rfloor \wedge \Resamp^{(n)}})_{t \ge 0}$ converge in distribution to $(\widetilde{\boldsymbol{\mathcal{T}}}_{t \wedge \resamp}^{\, \bullet})_{t \ge 0}$ in the GHP topology.
Similarly, since $t^{(n)}$ converges after scaling in the GHP topology towards $\boldsymbol{\mathcal{T}}$, using the image of the correspondence between these trees at time $t$, we conclude by comparing the discrete distance $\tilde{d}^{(n)}_{\lfloor tn\rfloor \wedge \Resamp^{(n)}}$ and the continuum one that the trees at time $t$ converge.
\end{proof}

Theorem~\ref{thm:cv_BBJ_entre_deux_resampling} does not readily extend to the first resampling time. More precisely,~\eqref{eq:conv1} shows that, just before resampling, the marked tree $ (\frac{1}{\sqrt{2n}} T^{(n)}_{\Resamp^{(n)}-1}, \frac{1}{2n} \mu^{(n)}_{\Resamp^{(n)}-1}, X^{(n)}_{\Resamp^{(n)}-1})$ converges towards $\Psi( \boldsymbol{ \mathcal{T}}^{\, \bullet},B)_{\resamp -}$ in the marked GHP sense. In general this \emph{does not imply}  that   
\begin{equation}\label{eq:convresamp}
\Bigl(\frac{1}{\sqrt{2n}} T^{(n)}_{\Resamp^{(n)}}, \frac{1}{2n} \mu^{(n)}_{\Resamp^{(n)}}, X^{(n)}_{\Resamp^{(n)}}\Bigr)  \cvloi \Psi( \boldsymbol{ \mathcal{T}}^{\, \bullet},B)_{\resamp}.
\end{equation}
The issue is that if the measure of $ \boldsymbol{ \mathcal{T}}^{\, \bullet}$ possesses an atom at the root, then, in the discrete setting we may resample $X^{(n)}_{\Resamp^{(n)}}$ close to the root, while the marked point in  $\Psi( \boldsymbol{ \mathcal{T}}^{\, \bullet},B)_{\resamp}$ is sampled according to the measure of $\Psi( \boldsymbol{ \mathcal{T}}^{\, \bullet},B)_{\resamp-}$ \emph{except} its possible atom at the root. However, as soon as $ \boldsymbol{ \mathcal{T}}^{\, \bullet}$ has no atom at the root, then~\eqref{eq:convresamp} is granted. By the Markovian nature of the discrete and continuous process, one may then extend the convergence up to finitely many resampling times: 
the convergence~\eqref{eq:convresamp} serves as a new initial condition, and by Lemma~\ref{lem:all_good}, the non-marked tree $T^{(n)}_{\Resamp^{(n)}}$ is $\max(n,\Resamp^{(n)})$-good and $\Resamp^{(n)}$ is of order $n$ by~\eqref{eq:conv1}, leading to the convergence~\eqref{eq:conv1} up to the second resampling time, etc.
Finally, recall from Lemma~\ref{lem:pas_d_accumulation} that resampling instants do not accumulate, then the convergence~\eqref{eq:conv1} extends to convergence over any compact intervals, i.e.~in the Skorokhod topology on $\D( \R_{+}, \mathbb{T}^{\, \bullet})$.

Theorem~\ref{thm:cv_BBJ_resampling} in the Introduction claims that such a convergence also holds when the discrete dynamics starts from a single vertex, which does not fit the assumption of Theorem~\ref{thm:cv_BBJ_entre_deux_resampling} but it can easily be deduced from this.

\begin{proof}[Proof of Theorem~\ref{thm:cv_BBJ_resampling}]
Fix $\delta>0$. The discrete dynamics started from a single vertex produces at time $\lfloor \delta n \rfloor$ a uniform random binary tree with size $\lfloor \delta n \rfloor$ and equipped with an independent uniform random vertex. By Aldous' invariance principle~\eqref{eq:aldous} (extended to take care of the measure), we then have
\[\frac{1}{\sqrt{2n}} \cdot \boldsymbol T_{\lfloor \delta n\rfloor}^{\, \bullet} \cvloi \boldsymbol{\mathcal{T}}^{\, \bullet}_\delta,\]
where the limit is a Brownian CRT with mass $\delta$ marked at an independent random point sampled from its mass measure.
We may then apply Theorem~\ref{thm:cv_BBJ_entre_deux_resampling} and the above discussion to derive the convergence of processes:
\[\Bigl(\frac{1}{\sqrt{2n}} \cdot \boldsymbol T^{\, \bullet}_{\lfloor t n\rfloor}\Bigr)_{t \ge \delta} \cvloi (\boldsymbol{\mathcal{T}}^{\, \bullet}_t)_{t \ge \delta} = \Psi(\boldsymbol{\mathcal{T}}^{\, \bullet}_\delta, B|_{[\delta, \infty)}).\]
To finish, let us remark that, as $\delta \to 0$, the sequence $(\frac{1}{\sqrt{2n}} \cdot \boldsymbol T^{\, \bullet}_{\lfloor t n\rfloor})_{0 \le t \le \delta}$ converges in probability towards a singleton uniformly in $n$. These facts imply the convergence stated in Theorem~\ref{thm:cv_BBJ_resampling}. 
\end{proof}

\section{R\'emy's diffusion is ergodic}
\label{sec:ergo}

In this section we prove Theorem~\ref{thm:ergo} on unique ergodicity of R\'emy's continuous dynamics.
Henceforth, we assume that the initial tree $\boldsymbol{\mathcal{T}}^{\, \bullet}_0$ has mass $1$ and is marked at a point different from its root. We let $B$ be an independent Brownian motion.
In order to lighten the notation, we shall set for every $t \ge 0$:
\[\boldsymbol{\mathcal{T}}_t^{\, \bullet} = \Psi( \boldsymbol{\mathcal{T}}^{\, \bullet}_0,B)_t
\qquad\text{and}\qquad
\boldsymbol{\mathcal{S}}^{\, \bullet}_t = \frac{1}{\sqrt{1+t}} \cdot \boldsymbol{\mathcal{T}}_t^{\, \bullet}.\]
In this way $\boldsymbol{\mathcal{S}}^{\, \bullet}_t$ has mass $1$ and its marked point differs from its root for every $t \ge 0$.
Let $\boldsymbol{\mathcal{T}}_{2\mathbb{e}}^{\, \bullet}$ denote the CRT coded by twice the standard Brownian excursion in the usual sense, together with a marked point sampled independently from its mass measure.
Throughout this section, by a \emph{marked Brownian CRT}, we mean a possibly scaled version $\boldsymbol{\mathcal{T}}_{2\mathbb{e}}^{\, \bullet}$, in which the marked point is always sampled independently from the mass measure.
Then Theorem~\ref{thm:ergo} claims that for any (possibly random) such initial tree $\boldsymbol{\mathcal{T}}^{\, \bullet}_0$, we have the convergence in distribution
\[\boldsymbol{\mathcal{S}}^{\, \bullet}_t
\cvloi[t]
\boldsymbol{\mathcal{T}}_{2\mathbb{e}}^{\, \bullet}
,\]
in the marked GHP topology.

To prove this result, we first observe that, from the discrete approximation, if $\boldsymbol{\mathcal{T}}^{\, \bullet}_0$ has the law of $\boldsymbol{\mathcal{T}}_{2\mathbb{e}}^{\, \bullet}$, then so does $\boldsymbol{\mathcal{S}}^{\, \bullet}_t$ for any $t>0$.
The convergence for a generic initial distribution then follows if we can prove that $\boldsymbol{\mathcal{S}}^{\, \bullet}_t$ stays eventually arbitrarily close to the analogue started from $\boldsymbol{\mathcal{T}}_{2\mathbb{e}}^{\, \bullet}$ when using as much as possible the same randomness from Brownian motion and resampling for both processes.
We stress however that the dynamics never entirely forgets its past and so we only provide an asymptotic coupling in the spirit of~\cite{HMS11}.
Although we could try to fit the very general framework of this reference, we have opted for a (rather short and) completely self-contained argument.

Our strategy consists in showing that, given any initial tree and $\varepsilon>0$, with high probability, there exists a (random) finite time $t$ at which the tree $\boldsymbol{\mathcal{T}}^{\, \bullet}_t$ contains a subtree whose mass represents a fraction greater than $1-\varepsilon$ of the total tree and that has the law of a scaled version of $\boldsymbol{\mathcal{T}}_{2\mathbb{e}}^{\, \bullet}$ (and whose mark is the mark of $\boldsymbol{\mathcal{T}}^{\, \bullet}_t$), and such that in addition the subsequent evolution of this subtree and the entire tree stay close to each other. Precisely, we shall prove that the GHP distance between the rescaled entire tree and the rescaled Brownian subtree tends to $0$ on this event with large probability, which allows to conclude. 

\subsection{A large Brownian subtree}

Let us first make formal the previous idea that the entire tree will contain at some finite time a large Brownian subtree that will occupy most of it.
Let us first introduce some denomination that we will use throughout this section.

\begin{defi}\label{def:good_ergo}
We say that a random marked tree $\boldsymbol{\mathcal{T}}^{\, \bullet}$ with mass $1$ is \emph{$\varepsilon$-good} where $\varepsilon \in (0, 1)$, if it is composed of three parts, as illustrated in Figure~\ref{fig:good_Brownian_tree}:
\begin{enumerate}
\item a \emph{spine}, in red, which is a segment started from the root and with length smaller than $\varepsilon$;
\item a \emph{good tree}, in green, grafted at the extremity of the spine, with mass greater than $1-\varepsilon$, which has the law of a scaled marked Brownian CRT, whose marked point is that of the entire tree;
\item a \emph{bad} part made of an arbitrary forest of blue trees grafted all along the spine.
\end{enumerate}
\end{defi}

\begin{figure}[h!]\centering
\includegraphics[page=9, height=10\baselineskip]{les_dessins}
\caption{Strategy of the proof of Theorem~\ref{thm:ergo}: If we start from the situation on the left with a big green Brownian subtree sitting at the end of a short red spine, then with high probability this Brownian subtree will take over the entire tree when applying R\'emy's growing diffusion. A key input is that with high probability we always resample the marked point in the green Brownian part.}
\label{fig:good_Brownian_tree}
\end{figure}

If at some time $t \ge 0$ in R\'emy's diffusion, the rescaled tree $\boldsymbol{\mathcal{S}}^{\, \bullet}_t$ is $\varepsilon$-good, then we shall still call \emph{good} and \emph{bad} the parts of the subsequent trees that will come from stretching or grafting on these initial good and bad parts respectively, and call \emph{spine} the parts coming from stretching the initial red spine, but new subtrees grafted on the spine will belong to the bad part.

\begin{lem}[Birth of a Brownian subtree]\label{lem:ergo_good}
Fix any marked measured tree $\boldsymbol{\mathcal{T}}^{\, \bullet}_0$ with mass $1$ and a marked point different from the root and fix some $\varepsilon \in (0, 1)$. 
Then there exists an almost surely finite stopping time $N \ge 1$ (for the discrete-time filtration generated by the dynamics at the resampling instants) such that $\boldsymbol{\mathcal{S}}^{\, \bullet}_{\resamp_{N}}$  is $\varepsilon$-good.
\end{lem}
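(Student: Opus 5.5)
The plan is to produce the good tree from a \emph{single} long excursion of $B$ above its running minimum, occurring between two consecutive resampling instants. The ingredients are the construction of $\Psi$ via $\Phi$, the strong Markov property of $B$ at the resampling instants, and Brownian scaling. Fix $i\ge 0$ and look at the tree $\boldsymbol{\mathcal{T}}^{\, \bullet}_{\resamp_i}$, with total mass $M_i = 1+\resamp_i$ and a marked point $x$ at height $\delta_i>0$. Conditionally on $\mathcal{F}_{\resamp_i}$, the path $B' = B|_{[\resamp_i,\infty)}$ is a fresh Brownian motion.

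Let $E_i$ be the intersection of the following events.
\begin{enumerate}
\item Before $B'$ drops below $-\delta_i/2$, it makes an excursion above its running minimum of duration $L \ge K M_i$, with $K=K(\varepsilon)$ large.
\item After that excursion ends, $B'$ hits $-\delta_i$ (so that $\resamp_{i+1}$ occurs) within an additional time at most $\eta L$, with $\eta=\eta(\varepsilon)$ small.
\item The resampled point at $\resamp_{i+1}$ falls in the subtree coded by this excursion.
\end{enumerate}
On $E_i$, the construction of $\Phi$ shows the following. The excursion codes (twice) a Brownian tree of mass $L$, grafted at a single point $x_r$ on the stretched ancestral line of $x$, with $r\le \delta_i/2$. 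That point lies at distance at most $2\delta_i$ from the root. Everything else, namely the old tree together with the subtrees coded by the other excursions, has mass at most $M_i+\eta L+$ (mass of excursions before the big one). We take the segment from the root to $x_r$ as the spine, the big excursion tree as the good tree, and everything else as the bad part. Conditionally on its duration $L$, a Brownian excursion codes a CRT of mass $L$. Given $E_i$, the resampled point is sampled from the restriction of the mass measure to this subtree, hence independently and uniformly in it. So the good tree is a scaled marked Brownian CRT whose mark is the mark of the whole tree.

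After rescaling by $1/\sqrt{1+\resamp_{i+1}}$, the good tree has mass fraction at least $1-\varepsilon$ as soon as $K$ is large and $\eta$ is small. For the spine we need $2\delta_i/\sqrt{L} < \varepsilon$, which is ensured by also requiring $L\ge K\delta_i^2$. Hence on $E_i$ the rescaled tree $\boldsymbol{\mathcal{S}}^{\, \bullet}_{\resamp_{i+1}}$ is $\varepsilon$-good. We set $N$ to be the first index $i+1$ for which this occurs; checking that the event is visible at time $\resamp_{i+1}$ makes $N$ a stopping time.

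By Brownian scaling (time by $\delta_i^2$), $\P(E_i\mid\mathcal{F}_{\resamp_i})$ is a function of the ratio $\rho_i = M_i/\delta_i^2$ alone. This function is bounded below by some $p(A)>0$ whenever $\rho_i\le A$. Indeed, a Brownian motion started at $0$ has, with positive probability, an excursion above its minimum of duration $\ge K\max(A,1)$ before hitting $-1/2$, followed by a quick descent to $-1$. On the excursion event, the probability that the resampled point lands in the excursion subtree is at least $L/(M_i+(1+\eta)L)\ge 1/2$. A conditional Borel--Cantelli (Lévy's extension) argument then gives $N<\infty$ almost surely, provided that almost surely $\rho_i\le A$ for infinitely many $i$, for some fixed $A$.

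This last point is the main obstacle. The resampled height $\delta_i$ could a priori be tiny compared with $\sqrt{M_i}$, and then $B'$ exits almost immediately. I would argue as follows. By Lemma~\ref{lem:pas_d_accumulation}, resamplings do not accumulate, so $M_i\to\infty$ or the process stops resampling. In the latter case a single run of $B$ lasts forever without resampling. Then $B$ has arbitrarily long excursions above its minimum, and a different but simpler argument can be run at a deterministic large time; this case could also simply be shown to have probability zero. In the former case, I would show that the mass measure of $\boldsymbol{\mathcal{T}}^{\, \bullet}_{\resamp_{i+1}-}$ gives probability bounded below to heights $\ge c\sqrt{M_i}$, using a crude self-similarity argument: the subtrees created by $B$ during a run have heights comparable to the square root of their masses with probability bounded below. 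This yields $\P(\rho_{i+1}\le A\mid \mathcal{F}_{\resamp_{i+1}-})\ge q>0$ infinitely often. If that quantitative height estimate proves delicate, the fallback is to apply the same strategy at two consecutive resamplings: first wait for an excursion that creates mass at height $\asymp\sqrt{\text{mass}}$, then resample into it.
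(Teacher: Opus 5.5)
There is a genuine gap, and it is the step you yourself flag. Your event $E_i$ is in substance the paper's first step: one long excursion of $B$ above its running minimum, started low on the ancestral line, which dominates all other mass and receives the resampled point. Your reduction via Lévy's conditional Borel--Cantelli lemma to ``$\liminf_i (1+\resamp_i)/\delta_i^2<\infty$ almost surely'' is also sound. (Condition~1 needs a small repair: you must also bound the time elapsed before the long excursion starts, say by $\delta_i^2$. Otherwise the mass created before it is uncontrolled and the fraction $1-\varepsilon$ does not follow.)

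The proposed self-similarity estimate does not close that step. Knowing that newly created subtrees have heights comparable to the square root of \emph{their own} mass says nothing about heights relative to the \emph{total} mass $M_i$, which is what $\rho_i$ measures. Nor is there a per-step lower bound uniform in the current state. Take a tree that is a segment of length $a\ll 1$ carrying mass $1$, marked at its top. With probability close to $1$ the next run lasts a time of order $a^2$ and creates negligible mass at height $O(a)$. The tree at the next resampling then has essentially no mass at height $\ge c\sqrt{M}$. The conditional probability of producing such mass in one run is at most of order $\delta_i/\sqrt{M_i}$, and showing that these sum to infinity is the original problem again. Your fallback presupposes the same missing mechanism: something that forces the marked point high relative to $\sqrt{1+t}$, whatever the current tree looks like.

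The missing idea is to leave the resampling instants and use the driving path at deterministic times. Between resamplings the marked height moves exactly like $B$, and at a resampling it jumps up from $0$. Hence for $s\le t$ the marked height at time $t$ is at least $B_t-\inf_{[s,t]}B$, regardless of the tree. The events $\{B_{2^k}-\inf_{[2^{k-1},2^k]}B>2\sqrt{2^{k-1}}\}$ depend on disjoint increments and all have the same positive probability, so one of them occurs almost surely. At that (non-resampling) time the rescaled marked height exceeds $2/\sqrt3>1$, i.e.\ your ratio is at most $1$. By the Markov property of $\Psi$ driven by $B$ at such times, your event $E$ with $A=1$ can be run from there. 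It produces an $\varepsilon$-good tree at the next resampling instant with probability at least $c_K>0$. On failure, retry: directly if the new resampled height is already at least $\sqrt{1+\resamp_i}$, otherwise after waiting for the next such block, time-scaled by the current mass. Each attempt succeeds with conditional probability at least $c_K$, so conditional Borel--Cantelli gives $N<\infty$ almost surely.
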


\begin{proof}
Suppose first that the marked point in the initial tree lies at height $h \ge 1$ and let us prove the conclusion occurs with positive probability for $N=1$. Fix $K>0$ arbitrarily large and observe that the following event (depicted in Figure~\ref{fig:good_Brownian_motion} left)
about a Brownian motion $B$ started from $B_0=1$ occurs with positive probability:
first $\tau = \inf\{t>0 \colon B_t=0\} \in (K+1, K+2)$
and second, there exists 
a subexcursion above the running minimum, on the time interval say $[g,d]$, such that $g\leq 1, d \ge K+1$ and $B_g=B_d \le 1$.
Let us scale this picture by a factor $h$ in space and $h^2$ in time, then $h^2 \tau = \resamp_1$ is the first resampling time.
Conditionally on the previous event, the sub-excursion on the interval $(g,d)$ codes a Brownian subtree that occupies a fraction of the mass of the tree $\Psi(\boldsymbol{\mathcal{T}}^{\, \bullet}_0, B)_{\resamp_1}$ equal to, since $h^2 \ge 1$,
\[\frac{h^2 d - h^2 g}{1+h^2 \tau} \ge \frac{d-g}{1+\tau} \ge \frac{K}{K+3}.\]
Since $1+h^2 \tau \ge h^2 \tau \ge h^2 (K+1)$, then the root of this Brownian subtree lies at height at most
\[h 
\le \frac{1}{\sqrt{K+1}} \sqrt{1+h^2 \tau}
= \frac{1}{\sqrt{K+1}} \sqrt{1+\resamp_1}
.\]
We conclude that given any $K>0$, there exists $c_K>0$, depending only on $K$, such that, whatever the initial tree $\boldsymbol{\mathcal{T}}^{\, \bullet}_0$, provided that the marked point lies at height no smaller than $1$, at the first resampling instant $\resamp_1$, with probability at least $c_K$, the rescaled tree $\boldsymbol{\mathcal{S}}^{\, \bullet}_{\resamp_1}$ contains a Brownian subtree that occupies a fraction of the mass at least $1-3/(K+3)$ and whose root lies at height at most $1/\sqrt{K+1}$; finally the new marked point is sampled in the Brownian subtree from its mass measure.

Consider now the case of an initial marked point lying at height smaller than $1$ and recall that the initial tree $\boldsymbol{\mathcal{T}}^{\, \bullet}_0$ has mass $1$. Now for every $k \ge 1$, a Brownian motion started from $B_0=0$ has the same positive probability, independent of $k$, to satisfy $B_{2^k} > 2 \times \sqrt{2^k-2^{k-1}} + \inf_{[2^{k-1},2^k]} B$; see Figure~\ref{fig:good_Brownian_motion} right for a representation of this event for $k=1$. Hence almost surely this occurs for some finite index $k_0 \ge 1$ and then, whatever the initial tree $\boldsymbol{\mathcal{T}}^{\, \bullet}_0$, the marked point in the tree $\boldsymbol{\mathcal{S}}^{\, \bullet}_{2^{k_0}}$ lies at height greater than 
$2 \times \sqrt{2^{k_0}-2^{k_0-1}} / \sqrt{1+2^{k_0}} 
\ge 2 / \sqrt{3} > 1$.
According to the first part of this proof, there is a probability at least $c_K>0$ that at the first resampling time after $2^{k_0}$, the rescaled tree contains a Brownian subtree that occupies a fraction of the mass at least $1-3/(K+3)$ and whose root lies at height at most $1/\sqrt{K+1}$.
If this fails, then either the marked point at this resampling time lies at height greater than $1$ in the rescaled tree and we have the same probability $c_K$ that this succeeds at the next resampling time. Else, if the marked point at the resampling time lies at height smaller than $1$, then we wait for a finite time until we are sure to reach a height greater than $1$ before applying the first part of the argument again. In any case, after an almost surely finite time, the same conclusion as the first paragraph holds.
\end{proof}

\begin{figure}[h!]\centering
\includegraphics[page=10, width=.9\linewidth]{les_dessins}
\caption{How to get a good tree from an arbitrary tree: on the left a Brownian motion started from $1$ makes an enormous excursion above its minimum at a low height before reaching $0$; on the right a Brownian motion started from $0$ ends at time $1$ at least $2$ unit above its minimum.}
\label{fig:good_Brownian_motion}
\end{figure}

\subsubsection*{Rate of growth of Brownian trees}

The next key idea is to understand the rate of growth of the discrete-time chain made of the random trees in-between resampling times when the initial tree is a Brownian CRT.
This will eventually show that the Brownian subtree obtained from the previous lemma will grow more rapidly than the rest.
Precisely, let $(\resamp_n)_{n \ge 1}$ denote the resampling instants in the process $\Psi( \boldsymbol{\mathcal{T}}^{\, \bullet}_0,B)$ and also $\resamp_0=0$.

It follows from Theorem~\ref{thm:cv_BBJ_entre_deux_resampling} and Lemma~\ref{lem:indep_tree_failure} that if $\boldsymbol{\mathcal{T}}^{\, \bullet}_0 = \boldsymbol{\mathcal{T}}^{\, \bullet}_{2 \mathbb{e}}$ is a standard marked Brownian CRT, then conditionally on $(\resamp_n)_{n \geq 1}$, the renormalised trees $(\boldsymbol{\mathcal{S}}^{\, \bullet}_{\resamp_n})_{n \ge 0}$
form a stationary sequence.
In particular, the ratios of their masses $(1+\resamp_{n+1})/(1+\resamp_n)$ for $n \geq 0$ also form a stationary sequence.
We first compute this stationary law.

For every $a >0$, let $\tau_a = \inf \{t > 0 \colon B_t = -a\}$ denote the first hitting time of $-a$ by the standard Brownian motion,
then the first resampling time $\resamp_1$ is distributed as $\tau_{\mathrm{Ht}(X_0)}$ where  $\mathrm{Ht}(X_0)$ is the height of the random point $X_0$ in $\boldsymbol{\mathcal{T}}^{\, \bullet}_0$, which is independent of $B$.

\begin{lem}\label{lem:temps_atteinte_Rayleigh}
When $\boldsymbol{\mathcal{T}}^{\, \bullet}_0 = \boldsymbol{\mathcal{T}}^{\, \bullet}_{2 \mathbb{e}}$, the random variable $\log(1+\tau_{\mathrm{Ht}(X_0)})$ is exponentially distributed with expectation $2$. 
\end{lem}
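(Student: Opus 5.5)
The plan is to combine two explicit laws: the Rayleigh law of the height of a uniform point in the Brownian CRT, and the reflection principle for Brownian hitting times.

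\emph{Step 1: the law of $\mathrm{Ht}(X_0)$.} In $\boldsymbol{\mathcal{T}}_{2\mathbb{e}}$ the marked point $X_0$ is the image of an independent uniform time $U$. Its height is therefore $2\mathbb{e}_U$. It is classical that this has the Rayleigh distribution:
\[\P(\mathrm{Ht}(X_0) > h) = \e^{-h^2/2} \quad \text{for } h \ge 0.\]
I would quote this from the literature, since it is the distance from the root to a uniform leaf in Aldous' CRT. Alternatively it follows from the discrete side: in a uniform binary tree the height of a uniform vertex, divided by $\sqrt{2n}$, converges to this law, which is consistent with \eqref{eq:aldous}. I expect this identification, including getting the normalisation right for the factor $2$ in $2\mathbb{e}$, to be the only delicate point.

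\emph{Step 2: the law of $\tau_a$.} By the reflection principle, $\P(\tau_a \le t) = \P(|B_t| \ge a) = \P(\sqrt{t}\,|N| \ge a)$, where $N$ is a standard Gaussian variable.

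\emph{Step 3: combine, using independence of $\mathrm{Ht}(X_0)$ and $B$.} Conditioning on $N$ and applying the Rayleigh tail from Step 1,
\[\P\bigl(\tau_{\mathrm{Ht}(X_0)} \le t\bigr) = \P\bigl(\mathrm{Ht}(X_0) \le \sqrt{t}\,|N|\bigr) = 1 - \E\bigl[\e^{-tN^2/2}\bigr] = 1 - (1+t)^{-1/2}.\]
The last equality is the Gaussian integral $\E[\e^{-tN^2/2}] = (1+t)^{-1/2}$.

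\emph{Step 4: conclude.} From Step 3, $\P(1+\tau_{\mathrm{Ht}(X_0)} > s) = s^{-1/2}$ for all $s \ge 1$. Hence, for every $u \ge 0$,
\[\P\bigl(\log(1+\tau_{\mathrm{Ht}(X_0)}) > u\bigr) = \e^{-u/2},\]
which is the exponential law with expectation $2$.
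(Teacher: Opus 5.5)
Your proof is correct and uses the same two ingredients as the paper's proof: the Rayleigh law $x\e^{-x^2/2}$ for the height of a uniform point of $\mathcal{T}_{2\mathbb{e}}$ (your normalisation $\P(2\mathbb{e}_U>h)=\e^{-h^2/2}$ is right), and the law of the hitting time $\tau_a$, combined through the independence of $X_0$ and $B$. The only difference is computational: the paper integrates the product of the two densities via the change of variables $(u,y)=(\log(1+t), x\sqrt{(1+t)/t})$, whereas you use that $\tau_a$ has the law of $a^2/N^2$ and read off the tail $\P(\tau_{\mathrm{Ht}(X_0)}>t)=\E[\e^{-tN^2/2}]=(1+t)^{-1/2}$ directly, which is a slightly quicker route to the same identity.
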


\begin{proof}
For standard Brownian motion, $\tau_a$ has density $\frac{a}{\sqrt{2\pi t^3}}\e^{-a^2/(2t)}$ on $[0,\infty)$.
Then for twice the standard Brownian excursion, it is well-known that the height of a uniform random point follows the Rayleigh distribution with density $x \e^{-x^2/2}$ on $[0,\infty)$.
This can be obtained e.g.~from the Bismut decomposition or by taking the scaling limit of the height of a uniform random vertex in a uniform random binary tree.

A straightforward integration, using
the change of variables $(u, y) = (\log(1+t), x \sqrt{(1+t) / t})$,
whose Jacobian determinant equals $(t (1+t))^{-1/2}$,
shows that for any measurable and nonnegative function $f$, we have
\begin{align*}
\E[f(\log(1+\tau_{\mathrm{Ht}(X)}))]
&= \int_0^\infty \int_0^\infty f(\log(1+t)) \frac{x}{\sqrt{2\pi t^3}}\e^{-x^2/(2t)} x \e^{-x^2/2} \d x \d t
\\
&= \int_0^\infty \int_0^\infty f(u) \e^{-u/2} \frac{y^2}{\sqrt{2\pi}}\e^{-y^2/2} \d y \d u
\\
&= \int_0^\infty \frac{f(u)}{2} \e^{-u/2} \d u
.\end{align*}
This proves the claim.
\end{proof}

In the next proposition, we let $\boldsymbol{\mathcal{G}}_t$ denote the good part of $\boldsymbol{\mathcal{T}}_t$ and we denote by $\gamma_t$ its mass and by $\beta_t = (t+1)-\gamma_t$ the combined mass of the bad part and the spine.
If the marked point $x_t$ of $\boldsymbol{\mathcal{T}}^{\, \bullet}_t$ belongs to $\boldsymbol{\mathcal{G}}_t$, then the latter is marked at $x_t$; otherwise we mark $\boldsymbol{\mathcal{G}}_t$ at its root.

\begin{prop}[The Brownian subtree may take over]\label{prop:toy_ergo}
For any $\varepsilon>0$, there exists $c_\varepsilon>0$ such that if the initial tree $\boldsymbol{\mathcal{T}}^{\, \bullet}_0$ is $\varepsilon$-good, then with (annealed) probability at least $c_\varepsilon$, the following two events hold:
\begin{enumerate}
\item For every $n\ge 1$, the point that is resampled at time $\resamp_n$ in the tree $\boldsymbol{\mathcal{T}}_{\resamp_n}^{\, \bullet}$ always belongs to the good part $\boldsymbol{\mathcal{G}}_{\resamp_n}$,

\item and the proportion of the good mass $\gamma_t / (1+t)$ tends to $1$ and the marked GHP distance between the rescaled entire tree $\frac{1}{\sqrt{1+t}} \cdot \boldsymbol{\mathcal{T}}_t^{\, \bullet}$ and the good part $\frac{1}{\sqrt{\gamma_t}} \cdot \boldsymbol{\mathcal{G}}_t^{\, \bullet}$ tends to $0$. 
\end{enumerate}
\end{prop}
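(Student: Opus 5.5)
The plan is to track the good part along the discrete-time chain of resampling instants and compare its mass growth with that of the bad part. On the event that every resampling so far has landed in the good part, the evolution of $\boldsymbol{\mathcal{G}}^{\, \bullet}$ between resamplings is itself R\'emy's diffusion started from a marked Brownian CRT. Indeed, the marked point lies in the good tree, so the stretching and grafting happen only inside $\boldsymbol{\mathcal{G}}$ and along the spine. A resampling conditioned to fall in $\boldsymbol{\mathcal{G}}$ is distributed according to its mass measure.

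A subtlety is that the marked point may descend from the good tree into the spine before resampling. Grafting then occurs on the spine and counts as bad mass. To handle this, I would define the first-exit times $\resamp^{\mathcal{G}}_n$ at which the mark reaches the root of $\boldsymbol{\mathcal{G}}$. By the stationarity discussed before Lemma~\ref{lem:temps_atteinte_Rayleigh}, together with that lemma, the good masses satisfy $\log \gamma_{\resamp^{\mathcal{G}}_{n}} - \log\gamma_0 = \sum_{k\le n} E_k$ with $E_k$ i.i.d.\ exponential of mean $2$. After time $\resamp^{\mathcal{G}}_n$, the mark travels down the spine, whose rescaled length is at most about $\varepsilon$. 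The Brownian motion then either hits the true root, which gives a genuine resampling, or wanders above. The bad mass created during this spine phase is the time spent, of order (spine length)$^2 \times$ heavy tail.

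The key estimate is that the bad mass $\beta$ grows only by grafting during spine phases, and each such phase adds at most a time $\tau_{L}$ with $L$ the current spine length. The spine length itself is stretched by a factor $2$ along the minimum of $B$, so $L$ grows like the running minimum. I would aim to show that with positive probability, $\beta_t/\gamma_t \to 0$, and that at each resampling the probability of landing in the bad part, $\beta/(1+t)$, is summable along the sequence of resamplings. Because $\gamma$ grows geometrically in the number of resamplings, it suffices to show that $\beta$ and $L^2$ grow subgeometrically, for instance at most polynomially in $n$ with high probability. Then $\sum_n \beta_{\resamp_n}/(1+\resamp_n) <\infty$, and a Borel--Cantelli or product bound $\prod_n(1-\beta_{\resamp_n}/(1+\resamp_n))\ge c_\varepsilon$ gives item 1 with positive probability once $\varepsilon$ is small. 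If needed, $c_\varepsilon$ can first be obtained by letting $\varepsilon$ be small and then using monotonicity in the initial configuration.

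For item 2, on this event the rescaled spine length $L_t/\sqrt{1+t}$ tends to $0$ and the bad mass fraction tends to $0$. The marked GHP distance between $\frac{1}{\sqrt{1+t}}\cdot\boldsymbol{\mathcal{T}}^{\, \bullet}_t$ and $\frac{1}{\sqrt{\gamma_t}}\cdot\boldsymbol{\mathcal{G}}^{\, \bullet}_t$ can then be bounded using the correspondence that maps spine and bad trees to the root of $\boldsymbol{\mathcal{G}}$. This requires controlling the heights of the bad trees, which are bounded by spine length plus the heights of grafted excursions, that is, by the running maximum of $B$ minus its minimum over spine phases, divided by $\sqrt{1+t}$. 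It also requires the scaling discrepancy $|1/\sqrt{1+t}-1/\sqrt{\gamma_t}|$ times the diameter of $\boldsymbol{\mathcal{G}}$. The main obstacle is the third step: showing, uniformly, that bad growth during spine excursions stays negligible relative to the exponential growth of $\gamma$. The difficulty is that hitting times $\tau_L$ are heavy-tailed, so a single spine phase could create huge bad mass. The first thing I would try is to note that a spine phase of length $\tau_L$ that does not hit the root creates bad mass of order $L^2 \times$ (a heavy-tailed variable). I would compare this with the good mass $\gamma\gg L^2$ via the Rayleigh/exponential computation, and obtain positive probability by a union over scales with geometrically decreasing failure probabilities.
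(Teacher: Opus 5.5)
Your key reduction fails. You write that ``it suffices to show that $\beta$ and $L^2$ grow subgeometrically, for instance at most polynomially in $n$'', but on the very event you consider this is false.

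Every resampling instant requires the mark to reach the true root. Once the mark has left $\boldsymbol{\mathcal{G}}$ through its root, it can never re-enter it: excursions of $B$ above its running minimum only graft new, bad, subtrees on the spine, so every spine phase runs all the way down to the root. Hence in each round the mark traverses the whole spine exactly once and the stretching doubles it. The spine length after $n$ rounds is therefore exactly $2^n h$. It does not grow ``like the running minimum'' of $B$: that minimum is of order $\sqrt{1+t}$, and if $L$ grew like it, $L^2$ would be comparable to $\gamma$ and nothing would work.

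By the strong Markov property, the spine phases concatenate into a single Brownian motion $B^2$, independent of the part $B^1$ driving the good tree. The bad mass after $n$ rounds is then $(1-m)+\tau^2_{h(2^n-1)} \mathop{=}^{(d)} (1-m)+h^2\tau^2_{2^n-1}$, where $\tau^2_a$ is the hitting time of $-a$ by $B^2$. This grows like $4^n$, i.e.\ geometrically. So the condition you set out to verify cannot hold, and the ``main obstacle'' you leave open is exactly where the proof lies.

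The missing idea is a comparison of two exponential rates. The good mass after $n$ rounds is $m\,\e^{n(\Theta^1+o(1))}$. Birkhoff's theorem together with Lemma~\ref{lem:temps_atteinte_Rayleigh} gives $\E[\Theta^1]=2$, hence $\P(\Theta^1\ge 2)>0$. The bad mass is at most $(4+\eta)^n$ for all large $n$ with positive probability. Indeed $\tau_a \mathop{=}^{(d)} a^2\tau_1$ with $\P(\tau_1>x)\asymp x^{-1/2}$, so the failure probabilities at successive scales are geometrically small; this is the correct form of your ``union over scales''.

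Because $\e^2>4$, choose $\eta$ with $\e^2>4+4\eta$. Then $\beta_{\resamp_n}/\gamma_{\resamp_n}$ decays geometrically, so $\prod_n \gamma_{\resamp_n}/(\gamma_{\resamp_n}+\beta_{\resamp_n})>0$ with positive probability. Item 2 then follows along the lines you sketch. The spine length $2^nh$ and the diameters of the bad trees (at most $(4+3\eta/2)^{n/2}$ with positive probability) are $o(\sqrt{\gamma_{\resamp_n}})$. Intermediate times are handled by comparing with the two surrounding resampling instants.

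A smaller point: your claim that the good log-increments are i.i.d.\ exponential does not follow from stationarity and the one-dimensional law in Lemma~\ref{lem:temps_atteinte_Rayleigh}. It would need an independence statement, such as a continuum analogue of Lemma~\ref{lem:indep_tree_failure}. The paper only uses the possibly random Birkhoff limit $\Theta^1$, which is enough for a positive-probability statement.
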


\begin{proof}
Recall that if we start the dynamics with a marked Brownian CRT of mass $1$, then the ratios $((1+\resamp_{n})/(1+\resamp_{n-1}))_{n \ge 1}$ of the masses between two resampling times form a stationary sequence.
Without assuming ergodicity of the rescaled trees $\boldsymbol{\mathcal{S}}^{\, \bullet}_{\resamp_n}$ (which is what we are eventually aiming to prove!), Birkhoff's ergodic theorem implies the convergence 
\begin{equation} \label{eq:birkoff}
\frac{1}{n}\sum_{k=1}^n \log \frac{1+\resamp_{k}}{1+\resamp_{k-1}} \cvps \Theta, 
\end{equation}
where, by uniform integrability, the limit $\Theta$ is a random variable that has expectation $\E[\log(1+\resamp_1)] = 2$ according to Lemma~\ref{lem:temps_atteinte_Rayleigh}. Notice that this expectation is relative to both the initial tree and the independent Brownian motion $B$ driving  R\'emy's diffusion (and the randomness needed for the resamplings).

Now suppose that the initial tree contains a good part, which is a marked Brownian CRT with mass $m$, whose marked point is that of the entire tree, sitting on top of a spine with length $h$. 
If the dynamics $\Psi$ only resamples points in the good tree, then in-between resampling events, it first stretches and glues in this good subtree, before reaching the spine, stretching it and gluing new bad subtrees up to the next resampling event.
By the strong Markov property, we may split the Brownian motion $B$ in two: the part $B^1$ affecting the good tree and the part $B^2$ affecting the spine and bad trees and both are independent Brownian motions.
Since the good part is initially a Brownian CRT, then after scaling by a factor $m$, its mass evolution satisfies~\eqref{eq:birkoff} with the $\resamp^1_k$'s corresponding to $B^1$ and with a limit $\Theta^1$ that satisfies $\E[\Theta^1] = 2$, so in particular $\P(\Theta^1 \ge 2)>0$.
As for the rest, at each step the length of the spine is simply doubled, and we add a forest coded by the Brownian motion $B^2$ started from the current length of the spine and killed upon reaching $0$.

Then after $n$ resampling steps, if all in the good part, this good part has mass
\[\gamma_{\resamp_n} = m (1+\resamp^1_n) 
= m \prod_{k=1}^n \frac{1+\resamp^1_k}{1+\resamp^1_{k-1}}
= m \e^{n (\Theta^1+o(1))}
,\]
where $o(1)$ is a term that tends almost surely to $0$,
while the spine and bad part have a combined mass
\[\beta_{\resamp_n} = (1-m) + \tau^2_{h(1+2+4+ \ldots + 2^{n-1})} ~\mathop{=}^{(d)}~ (1-m) + h^2 \tau^2_{2^n-1},\]
where $\tau^2_a$ is the hitting time of $-a$ by the Brownian motion $B^2$.

Suppose that the initial tree is $\varepsilon$-good, namely that $h<\varepsilon$ and $m>1-\varepsilon$.
Let us fix $\eta>0$ small enough so $\e^2 > 4+4\eta$. Easy estimates show that, with a positive probability, depending on $\varepsilon$, we have $\beta_{\resamp_n} \leq (4+ \eta)^{n}$ for all $n \ge 0$ large enough. Recall that $\P(\Theta \ge 2)>0$; since $\e^2 > 4+4\eta$, then again, with a positive probability, depending on $\varepsilon$, we have $\gamma_{\resamp_n} \geq (4+ 2\eta)^{n}$ for all $n \ge 0$ large enough. We infer that, a posteriori, the conditional probability to always resample the points in the good part equals
\[\prod_{n \ge 1} \frac{\gamma_{\resamp_n}}{\beta_{\resamp_n}+\gamma_{\resamp_n}},\]
which is positive with positive probability.
This proves the first claim.

For the second claim, note that on the previous event, after $n$ steps and without rescaling, the good tree is a Brownian CRT with mass $\gamma_{\resamp_n} \geq (4+ 2\eta)^{n}$, whereas the bad part is made of a spine with length $h(2^n-1)$ on which are grafted the initial bad trees that have not changed as well as a forest of Brownian CRT's with total mass at most $\beta_{\resamp_n} \le (4+\eta)^n$.
In addition, with a positive probability, depending again only on $\varepsilon$, this bad forest of Brownian CRT's only contains trees with diameter smaller than $(4+3\eta/2)^{n/2}$.
If we rescale the entire tree by $1/(\gamma_{\resamp_n}+\beta_{\resamp_n}) = 1/(\resamp_n+1)$, then the spine plus bad part only thus represent a tiny fraction in terms of both the mass and distances so the good part is eventually close to the entire tree for large enough resampling instants.

In order to conclude about any time $t$ large enough, we compare the situation at such a time and at the two surrounding resampling times using monotonicity.
Indeed, if the point that is resampled at time $\resamp_n$ belongs to the good part, then the dynamics will first affect this part only, so the rescaled good part will be even closer to the rescaled entire tree, until we leave the good part and reach the spine: then the dynamics will grow the bad part so the good part and the entire tree start to differ more and more, but only up to time $\resamp_{n+1}$ where there are close again.
\end{proof}

Let us prove now that the conclusion of Proposition~\ref{prop:toy_ergo} in fact holds with high probability when $\varepsilon$ is small. To this end, let us first improve the ergodic theorem~\eqref{eq:birkoff} by showing that the limit $\Theta$ is maximal. This relies on  Proposition~\ref{prop:toy_ergo} (that used~\eqref{eq:birkoff} in the first place!).

\begin{cor}[Ergodic theorem for the growth of Brownian trees]\label{cor:ergo_masses_resampling}
When $\boldsymbol{\mathcal{T}}_0^{\, \bullet} = \boldsymbol{\mathcal{T}}^{\, \bullet}_{2 \mathbb{e}}$, we have the almost sure convergence:
\[\frac{1}{n}\sum_{k=1}^n \log \frac{1+\resamp_{k}}{1+\resamp_{k-1}} \cvps \E[\log(1+\tau_{\mathrm{Ht}(X_0)})] = 2.\]
\end{cor}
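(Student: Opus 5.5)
We already know from~\eqref{eq:birkoff} that the averages converge almost surely to a random limit $\Theta$ with $\E[\Theta]=2$. It therefore suffices to show that $\Theta$ is almost surely constant, i.e.\ that the invariant $\sigma$-field of the stationary sequence of ratios is trivial up to null sets. I would identify $\Theta$ as a tail quantity: $\Theta = \lim_n \frac1n \log(1+\resamp_n)$. This limit is unchanged if we modify the first finitely many resampling instants or the initial tree in a way that only changes the total mass by a bounded factor. The plan is then to show, using Proposition~\ref{prop:toy_ergo}, that $\Theta$ does not depend on the initial data in the sense of a coupling. Since $\E[\Theta]=2$, this forces $\Theta=2$ almost surely.

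\textbf{Step 1.} I start from $\boldsymbol{\mathcal{T}}^{\, \bullet}_{2\mathbb e}$ and fix $\varepsilon>0$. By Lemma~\ref{lem:ergo_good} applied to the Brownian initial tree, there is an almost surely finite stopping time $N$ such that $\boldsymbol{\mathcal{S}}^{\, \bullet}_{\resamp_N}$ is $\varepsilon$-good. Its good part is a scaled marked Brownian CRT of mass $m$, independent (given $m$, the spine length and the bad forest) of the future driving Brownian motion. By the strong Markov property at $\resamp_N$, Proposition~\ref{prop:toy_ergo} applies. With probability at least $c_\varepsilon$, all future resamplings fall in the good part, and then $1+\resamp_n = (\gamma_{\resamp_n}+\beta_{\resamp_n})$ with $\gamma_{\resamp_n}/(1+\resamp_n)\to1$. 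Hence on this event $\Theta = \Theta^1$, where $\Theta^1$ is the Birkhoff limit for the good subtree alone, driven by $B^1$.

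\textbf{Step 2.} The key observation is that, conditionally on the event, $\Theta^1$ is the Birkhoff limit of a fresh copy of the same stationary sequence, started from an independent marked Brownian CRT. I would then argue as follows. Let $A=\{\Theta\ge a\}$ for some $a$ with $\P(A)\in(0,1)$, assuming for contradiction that $\Theta$ is not constant. By the martingale convergence theorem, $\P(A\mid \mathcal F_{\resamp_n})\to \mathbbm 1_A$ almost surely. On the other hand, at each good restart time the conditional probability of $A$ is bounded below by $c_\varepsilon\,\P(\Theta^1\ge a)$, which is a positive constant. The restarts can be made to happen infinitely often by repeating Lemma~\ref{lem:ergo_good} after each failure. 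Similarly, $\P(A^c\mid\mathcal F_{\resamp_n})\ge c_\varepsilon\P(\Theta^1<a)>0$ infinitely often. These two bounds contradict $\P(A\mid\mathcal F_{\resamp_n})\to\mathbbm 1_A\in\{0,1\}$, so $\Theta$ is almost surely constant, equal to $\E[\Theta]=2$.

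\textbf{Main obstacle.} The delicate point is justifying that at a good restart time the conditional law of the future good-part growth is exactly that of the stationary Brownian chain, with the positive probability from Proposition~\ref{prop:toy_ergo} holding \emph{conditionally} on the past. This conditional form is needed rather than just the annealed one. I would handle it by using that $c_\varepsilon$ depends only on $\varepsilon$ and not on the bad part, and by the independence of the splitting $B=(B^1,B^2)$ together with the fact that $\Theta$ only depends on the tail. To make the restarts recur infinitely often, I would simply reapply Lemma~\ref{lem:ergo_good} from the current tree, which is always of mass one and marked away from the root after rescaling.
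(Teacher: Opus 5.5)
Your mechanism matches the paper's. From any resampling time, Lemma~\ref{lem:ergo_good} and the strong Markov property give a later $\varepsilon$-good time. On the success event of Proposition~\ref{prop:toy_ergo}, the global rate $\Theta$ is then governed by the rate $\Theta^1$ of a fresh stationary copy.

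The gap is in Step~2. There you bound $\P(\Theta<a\mid\mathcal F_{\resamp_N})\ge c_\varepsilon\,\P(\Theta^1<a)$ as if success were independent of $\Theta^1$, and it is not. Given the good masses, the conditional probability of always resampling in the good part is $\prod_n \gamma_{\resamp_n}/(\gamma_{\resamp_n}+\beta_{\resamp_n})$. The bad mass is $\beta_{\resamp_n}=(1-m)+\tau^2_{h(2^n-1)}$, where $\tau^2_{h(2^n-1)}$ is the hitting time of $-h(2^n-1)$ by $B^2$. It is driven by $B^2$ alone and grows like $4^{n(1+o(1))}$, because the spine doubles at each traversal. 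On $\{\Theta^1<\log 4\}$ the ratio $\beta_{\resamp_n}/\gamma_{\resamp_n}$ blows up and the product vanishes, so success essentially forces $\Theta^1\ge\log 4$.

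For instance, suppose $\Theta$ were uniform on $\{1,3\}$, which has mean $2$. For every $a$ with $\P(\Theta\ge a)\in(0,1)$, the event $\{\Theta<a\}=\{\Theta=1\}$ could never be produced by a successful restart. Your two-sided contradiction with the martingale convergence theorem would then never materialise. So constancy of $\Theta$ does not follow from the argument as written.

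The repair is to use only the side the mechanism controls, together with $\E[\Theta]=2$.
\begin{itemize}
\item Fix $b\in(\log 4,2)$. Since $\E[\Theta^1]=2$, we have $\P(\Theta^1\ge b)>0$.
\item The construction in the proof of Proposition~\ref{prop:toy_ergo} then gives $\P(\text{success}\cap\{\Theta^1\ge b\})\ge c_{\varepsilon,b}>0$, uniformly over $\varepsilon$-good starting trees. It works because the bad mass grows like $4^n$ while the good mass grows at least like $\e^{n(b-o(1))}$ with $\e^{b}>4$. This is the correct replacement for your product $c_\varepsilon\,\P(\Theta^1\ge a)$, which is not justified either.
\item On this event, the trivial bound $1+\resamp_n\ge\gamma_{\resamp_n}$ already yields $\Theta\ge\Theta^1\ge b$.
\item Hence $\P(\Theta\ge b\mid\mathcal F_{\resamp_n})\ge c_{\varepsilon,b}$ for \emph{every} $n$, so you do not need infinitely many restarts. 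L\'evy's upward theorem then gives $\P(\Theta\ge b)=1$.
\item Letting $b\uparrow 2$ gives $\Theta\ge 2$ almost surely, and $\E[\Theta]=2$ forces $\Theta=2$.
\end{itemize}
This is exactly the paper's proof. It argues by contradiction on $A=\{\Theta\le 2-\eta\}$, with $\eta$ chosen so that $\e^{2-\eta}>4$ precisely to stay above the growth rate of the bad part. It approximates $A$ by $\mathscr F_n$-measurable events instead of invoking L\'evy's theorem.
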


\begin{proof}
According to~\eqref{eq:birkoff}, the almost sure convergence holds with a limit $\Theta$ that has expectation $\E[\log(1+\tau_{\mathrm{Ht}(X_0)})] = 2$. 
It suffices then to prove that $\Theta \ge 2$ almost surely.

Let us fix $\eta>0$ such that $\e^{2-\eta} > 4$ and such that $\P( \Theta = 2-\eta)=0$. Let $A = \{\Theta \le 2-\eta\}$, and let us suppose for contradiction that $\P(A) > 0$. Let us denote by $\Theta_n = \frac{1}{n}\sum_{k=1}^n \log \frac{1+\resamp_{k}}{1+\resamp_{k-1}}$ the left side of~\eqref{eq:birkoff} and let $A_n = \{\Theta_n \le 2 - \eta/2\}$.
By the convergence $\Theta_n \to \Theta$, the probability of the symmetric difference $A \Delta A_n$ tends to $0$.
Fix $\delta>0$ smaller than $\P(A)$ and let $n$ be large enough so $\P(A \Delta A_n) < \delta$. In particular $\P(A_n)>\P(A)-\delta>0$.
Note that the event $A_n$ belongs to 
$\mathscr{F}_n = \sigma(\boldsymbol{\mathcal{T}}_0^{\, \bullet}, (B_t)_{t \le \resamp_n}, (x_{\resamp_k})_{k \le n})$ the filtration generated by R\'emy's diffusion up to the $n$'th resampling time.

Now let us fix $\varepsilon>0$.
Applying Lemma~\ref{lem:ergo_good} and Proposition~\ref{prop:toy_ergo} to the tree $\boldsymbol{\mathcal{T}}^{\, \bullet}_{\resamp_n}$ shows that with probability $c_{\varepsilon, \eta}>0$, the following holds conditionally on $\mathscr{F}_n$:
there exists a time $\resamp_N$ for some finite stopping time $N \ge n$ at which the tree is $\varepsilon$-good and the $\widetilde{\Theta}$ corresponding to the good Brownian subtree satisfies $\{\widetilde{\Theta}>2-\eta\}$, and after that all resampled points belong to the good part. 
On this event, the bad part grows at a lower rate, of order $2^{2n}$, so the average growth rate
$\Theta_n$ of the entire tree converges almost surely to $\widetilde{\Theta} > 2-\eta$.
This shows that $\P(A_n \cap A^c) \ge c_{\varepsilon,\eta} \P(A_n) \ge c_{\varepsilon,\eta} (\P(A)-\delta)$.
Since this probability is also upper bounded by $\P(A \Delta A_n) < \delta$, then we must have
$c_{\varepsilon,\eta} (\P(A)-\delta) < \delta$.
Since $\delta$ can be chosen arbitrarily small without affecting $c_{\varepsilon,\eta}$ and $\P(A)$, this cannot always be true, unless $\P(A)=0$.

We have thus shown that $\P(\Theta \le 2-\eta) = 0$ for each $\eta>0$ such that $\e^{2-\eta} > 4$ and such that $\P( \Theta = 2-\eta)=0$. We may choose an decreasing sequence of such $\eta$'s that converge to $0$ to conclude that $\P(\Theta < 2) = 0$.
\end{proof}

Bouncing back one more time, we may now improve Proposition~\ref{prop:toy_ergo} using this improved version of~\eqref{eq:birkoff} by showing that the event in that proposition occurs with high probability when $\varepsilon$ is small enough.

\begin{cor}[The Brownian subtree does take over]\label{cor:toy_ergo_gde_proba}
The lower bound $c_\varepsilon > 0$ from Proposition~\ref{prop:toy_ergo} tends to $1$ as $\varepsilon \to 0$.
\end{cor}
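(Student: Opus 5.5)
We revisit the proof of Proposition~\ref{prop:toy_ergo} and replace each of its ``with positive probability'' statements by ``with probability close to $1$ when $\varepsilon$ is small''. The failure event is contained in the union of three events, each of which we will show has small probability:
\begin{enumerate}
\item the good Brownian subtree grows too slowly;
\item the bad part and spine grow too fast;
\item some resampled point lands in the bad part or on the spine.
\end{enumerate}
Once the first claim of Proposition~\ref{prop:toy_ergo} holds with high probability, the second claim (GHP closeness of the rescaled tree and rescaled good part) follows exactly as before. The monotonicity argument between consecutive resampling times is unchanged, and the diameter bound on the bad forest holds with high probability by the same estimates.

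\textbf{The good part.} Given that all resamplings occur in the good part, the good part evolves under $B^1$ as R\'emy's diffusion started from a marked Brownian CRT of mass $m>1-\varepsilon$. By scaling and Corollary~\ref{cor:ergo_masses_resampling}, $\frac1n\log(\gamma_{\resamp_n}/m)\to 2$ almost surely. Fix $\eta>0$ with $\e^{2}>(4+4\eta)$. For every $\delta>0$ there is then a deterministic $n_0$ such that, with probability at least $1-\delta$,
\[\gamma_{\resamp_n}\ge m\,(4+2\eta)^n \quad\text{for all } n\ge n_0,\qquad \gamma_{\resamp_n}\ge m \quad\text{for all } n.\]
This law does not depend on $\varepsilon$ beyond the factor $m$.

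\textbf{The bad part and spine.} The combined mass is $\beta_{\resamp_n}\le \varepsilon+h^2\tau^2_{2^n-1}$ in distribution, with $h<\varepsilon$. Since $\tau_a\eqloi a^2\tau_1$ and $\tau_1$ has a $t^{-3/2}$ tail, a Borel--Cantelli and union bound shows that
\[\sup_n \frac{\tau^2_{2^n-1}}{(4+\eta)^n}<\infty\]
almost surely. So with probability $\ge 1-\delta$ this supremum is below a constant $M_\delta$, and then $\beta_{\resamp_n}\le \varepsilon+\varepsilon^2M_\delta(4+\eta)^n$ for every $n$.

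\textbf{The resampling event.} On the intersection of these two good events, the conditional probability that every resampling lands in the good part is
\[\prod_{n\ge1}\frac{\gamma_{\resamp_n}}{\gamma_{\resamp_n}+\beta_{\resamp_n}}\ \ge\ \exp\Bigl(-\sum_n \frac{\beta_{\resamp_n}}{\gamma_{\resamp_n}}\Bigr).\]
The terms with $n\ge n_0$ sum to at most $C\varepsilon^2M_\delta\sum_n\bigl(\tfrac{4+\eta}{4+2\eta}\bigr)^n + o(1)$. The finitely many terms with $n<n_0$ are each at most $(\varepsilon+\varepsilon^2M_\delta(4+\eta)^{n_0})/(1-\varepsilon)$. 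Hence the exponent tends to $0$ as $\varepsilon\to0$ for fixed $\delta$, and $\liminf_{\varepsilon\to0}c_\varepsilon\ge 1-2\delta$. Letting $\delta\to0$ gives the claim.

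\textbf{Main obstacle.} The delicate point is that the good event must be stated for the good part's own dynamics (driven by $B^1$ and its own resamplings) and the bad dynamics driven by $B^2$. These are independent and are coupled to the true dynamics only on the event that every resampling lands in the good part. I will therefore define $\gamma,\beta$ via these auxiliary independent processes, whose laws do not depend on the outcome of the resamplings. The product formula above then legitimately gives the probability of the event in Proposition~\ref{prop:toy_ergo}.
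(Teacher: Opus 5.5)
Your proposal is correct and follows the paper's argument. You bound the conditional probability of always resampling in the good part below by $\prod_{n\ge1}\gamma_{\resamp_n}/(\gamma_{\resamp_n}+\beta_{\resamp_n})$, control $\beta_{\resamp_n}$ by $\varepsilon$ times an $\varepsilon$-independent quantity growing at most like $(4+\eta)^n$, and bound $\gamma_{\resamp_n}$ from below via Corollary~\ref{cor:ergo_masses_resampling}, so that the ratios are geometrically small and the product tends to $1$. The differences are only presentational: you spell out the Borel--Cantelli bound on $\tau^2_{2^n-1}$, the separate treatment of the first $n_0$ terms, and the auxiliary-process coupling, whereas the paper packages all of this into a single bound $\beta_{\resamp_n}/\gamma_{\resamp_n}\le \varepsilon c^n Z$ with $Z$ almost surely finite and independent of $\varepsilon$.
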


\begin{proof}
Examining the proof of Proposition~\ref{prop:toy_ergo} shows that, now that we know that $\Theta=2$, the key point is to only resample points in the good part, which occurs with conditional probability
\[\prod_{n \ge 1} \frac{\gamma_{\resamp_n}}{\beta_{\resamp_n}+\gamma_{\resamp_n}}
= \prod_{n \ge 1} \frac{1}{1+\beta_{\resamp_n}/\gamma_{\resamp_n}}
,\]
given their respective masses. Our claim then reduces to showing that this product is close to $1$ with high probability when $\varepsilon$ is arbitrarily small.

Recall that the bad mass $\beta_{\resamp_n}$ is distributed as
\[(1-m) + h^2 \tau^2_{2^n-1}
\le \varepsilon (1 + \tau^2_{2^n-1}),\]
while the good mass is
\[\gamma_{\resamp_n} \ge (1-\varepsilon) \prod_{k=1}^n \frac{1+\resamp^1_k}{1+\resamp^1_{k-1}}
= (1-\varepsilon) \e^{n \Theta^1_n},\]
where the last equality serves as the definition of $\Theta^1_n$.

It is important to note here that $\tau^2_{2^n-1}$ and $\Theta^1_n$ do not depend on $\varepsilon$.
Fix a small $\delta>0$.
On the one hand $1+\tau^2_{2^n-1}$ grows slower than $2^{n (2+\delta)}$ and the other hand, by Corollary~\ref{cor:ergo_masses_resampling}, we know that $\Theta^1_n \to 2$ almost surely, so $\e^{n \Theta^1_n}$ grows faster than $\e^{n (2-\delta)}$.
It follows that if $\delta$ is small enough, then the ratio $\beta_{\resamp_n}/\gamma_{\resamp_n}$ is upper bounded by $\varepsilon c^n Z$ with $c = 2^{(2+\delta)} \e^{-(2-\delta)} < 1$ and where $Z$ is an almost surely finite random variable independent of $\varepsilon$. This implies that indeed the product $\prod_{n \ge 1} \frac{1}{1+\beta_{\resamp_n}/\gamma_{\resamp_n}}$ is positive and tends to $1$ in probability as $\varepsilon \to 0$.
\end{proof}

\subsection{Proof of ergodicity}

We now have all the ingredients to prove the claimed unique ergodicity, see~\cite{HMS11} for general arguments of this kind.

\begin{proof}[Proof of Theorem~\ref{thm:ergo}]
Let $\boldsymbol{\mathcal{T}}^{\, \bullet}_0$ be an arbitrary marked tree with mass $1$ and a marked point different from the root and let $\varepsilon > 0$ be arbitrarily small. According to Lemma~\ref{lem:ergo_good}, almost surely there exists a finite stopping time $t_0$, of the form $\resamp_N$, such that the rescaled tree $\boldsymbol{\mathcal{S}}^{\, \bullet}_{t_0}$ is $\varepsilon$-good.
Applying Proposition~\ref{prop:toy_ergo} to this tree $\boldsymbol{\mathcal{S}}^{\, \bullet}_{t_0}$ shows that with probability bounded below by $c_\varepsilon>0$, all subsequent resampled points will belong to the good Brownian subtree which will then (by Corollary~\ref{cor:ergo_masses_resampling}) grow more rapidly than the rest. Hence the GHP distance between the rescaled entire tree $(1+t_0)^{-1/2} \cdot \boldsymbol{\mathcal{T}}_{t_0}^{\, \bullet}$ and the good part $\gamma_{t_0}^{-1/2} \cdot \boldsymbol{\mathcal{G}}_{t_0}^{\, \bullet}$ tends to $0$.

According to Theorem~\ref{thm:cv_BBJ_entre_deux_resampling} and Lemma~\ref{lem:indep_tree_failure}, since the good part at time $t_0$ is a marked Brownian CRT conditionally on its size, then this remains true at each subsequent time.
This proves the convergence of the process $\boldsymbol{\mathcal{S}}^{\, \bullet}_t = (1+t)^{-1/2} \cdot \boldsymbol{\mathcal{T}}_{t}^{\, \bullet}$ to the standard Brownian CRT on this event with probability $c_\varepsilon$, and then in general since $c_\varepsilon$ can be made arbitrarily close to $1$ by taking $\varepsilon$ small enough by Corollary~\ref{cor:toy_ergo_gde_proba}.
\end{proof}

\phantomsection
\end{document}